\documentclass[11pt]{article}
\usepackage[top=1.5cm, bottom=1.5cm, left=1.5cm, right=1.5cm]{geometry}
\geometry{a4paper}
\usepackage[titletoc,title]{appendix}
\usepackage{graphicx}
\usepackage{color}
\usepackage{fancyhdr,hyperref}
\pagestyle{fancy}

\lhead{}\chead{}\rhead{}
\lfoot{}\cfoot{\thepage}\rfoot{}
\usepackage{amsmath,amssymb,amsthm}
\usepackage[font=small,skip=0pt]{caption}
\newtheorem{defn}{Definition}
\newtheorem{thm}{Theorem}
\newtheorem{lem}{Lemma}
\newtheorem{cor}{Corollary}

\DeclareMathOperator*{\adjugate}{adj}

\newcommand\norm[1]{\left\lVert#1\right\rVert}
\newcommand\cont[1]{[#1]}
\newcommand\disc[1]{\{#1\}}
\newcommand\contdep[2]{[#1]_{#2}}
\newcommand\discdep[2]{\{#1\}_{#2}}
\newcommand\mesh{\mathcal{I}^\tau\times\Omega^h}

\newcommand\aw[1]{{#1}}

\newcommand\rev[1]{{#1}}

\title{The multiplier method to construct conservative \\finite difference schemes for ordinary and partial differential equations}
\author{Andy T.S. Wan\thanks{Department of Mathematics and Statistics, McGill University, Montr\'eal,  QC, H3A 0B9, Canada (\href{mailto:andy.wan@mcgill.ca}{andy.wan@mcgill.ca})}, Alexander Bihlo\thanks{Department of Mathematics and Statistics, Memorial University of Newfoundland, St. John's, NL, A1C 5S7, Canada (\href{mailto:abihlo@mun.ca}{abihlo@mun.ca})}, Jean-Christophe Nave\thanks{Department of Mathematics and Statistics, McGill University, Montr\'eal,  QC, H3A 0B9, Canada (\href{mailto:jcnave@math.mcgill.ca}{jcnave@math.mcgill.ca})}}

\begin{document}
\maketitle

\begin{abstract}
We present the multiplier method of constructing conservative finite difference schemes for ordinary and partial differential equations. Given a system of differential equations possessing conservation laws, our approach is based on discretizing conservation law multipliers and their associated density and flux functions. We show that the proposed discretization is consistent for any order of accuracy when the discrete multiplier has a multiplicative inverse. Moreover, we show that by construction, discrete densities can be exactly conserved. In particular, the multiplier method does not require the system to possess a Hamiltonian or variational structure. Examples, including dissipative problems, are given to illustrate the method. In the case when the inverse of the discrete multiplier becomes singular, consistency of the method is also established for scalar ODEs provided the discrete multiplier and density are zero-compatible.\end{abstract}

\textbf{Key words.} conservative, structure preserving, finite difference, finite volume, conservation law, first integral, conservation law multiplier, Euler operator, multiplier method

\section{Introduction}
In recent decades, structure preserving discretizations have become an important idea in designing numerical methods for differential equations. The central theme is to devise discretizations which preserve at the discrete level certain important structures belonging to the continuous problem.

In the case of ordinary differential equations with a Hamiltonian structure, geometric integrators such as, energy-momentum method \cite{LaBGre75,SimTarWon98}, symplectic integrators \cite{leim04, hair06},  variational integrators \cite{MarWes01}, Lie group methods \cite{Dor10, Hyd14} and method of invariantization \cite{Olv01, KimOlv04} are a class of discretizations which preserves symplectic structure, first integrals, phase space volume or symmetries at the discrete level. Multi-symplectic methods have also been extended to include partial differential equations possessing a Hamiltonian structure \cite{Bri97, BriRei06, MarPatShk98}.

On the other hand for partial differential equations in divergence form, the well-known finite volume methods \cite{Lev92} naturally preserves conserved quantities in discrete subdomains for the equations. More recently, structure preserving discretizations such as discrete exterior calculus \cite{Hir03,DesAE05}, mimetic discretizations \cite{boch06, RobSte11} and finite element exterior calculus \cite{ArnFalWin06,ArnFalWin10} put forth complete abstract theories on discretizing equations with a differential complex structure. Specifically, these approaches provide consistency, stability and preservation of the key structures of de Rham cohomology and Hodge theory at the discrete level.

In the present work, we propose a numerical method for ordinary differential equations and partial differential equations which preserves discretely their first integrals, or more generally, their conservation laws. In contrast to geometric integrators or multi-symplectic methods which require the equations to possess a Hamiltonian or variational structure, our approach is based solely on the existence of conservation laws for the given set of differential equations and thus is applicable to virtually any physical system in practice. In particular, this method can be applied to dissipative problems possessing conserved quantities.

For a given system of differential equations with a set of conservation laws, under some mild assumptions on the local solvability of the system, there exists so-called conservation law multipliers which are associated to each conservation law and vice versa \cite{Olv00, BluAnc10}. This can be viewed as a generalization to the well-known Noether's theorem for equations derived via a variational principle, in which conservation laws of the Euler-Lagrange equations are associated with variational symmetries and vice versa.

\rev{
The main idea of our approach, called the multiplier method, is to directly discretize the conservation laws and their associated conservation law multipliers in a manner which is consistent to the given set of differential equations. Moreover, this approach provides a general framework for which discrete densities can be conserved exactly by construction. We emphasize the novelty of the multiplier method is that one can discretize a system of differential equations, which may not be in a conserved form, so that its conserved quantities are simultaneously conserved at the discrete level. 
}

The content of this paper is as follows: 
First, in Section \ref{sec:multiplierIntro}, we provide a brief introduction to conservation law multipliers and the relevant background. Then, we describe our method in Section \ref{sec:scalar} for the case of a scalar partial differential equation, as most of the main ideas are already present in that case. In Section \ref{sec:squareSystem}, we generalize the method to the case when there is the same number of equations as the number of conservation laws. In Section \ref{sec:rectSystem}, we treat the case when there are more equations than the number of conservation laws. In Section \ref{sec:examples}, we illustrate the multiplier method with various examples and demonstrate discrete preservation of their associated conservation laws; this includes problems which do not readily possess a Hamiltonian or variational structure. \rev{Finally in Section \ref{sec:singularMult}, we define the zero-compatible condition for scalar ODEs in the case when the inverse of multiplicative discrete multipliers become singular and establish the consistency of the multiplier method in that case.}

\section{Conservation law of PDEs and conservation law multipliers}
\label{sec:multiplierIntro}
Before we describe the construction of conservative schemes for PDEs, we introduce some background for conservation laws of PDEs. First, some notations and definitions. \rev{Let $\mathcal{I}=(0,T)$, $\mathcal{V}$ be an open neighborhood of a bounded domain $\Omega$ on $\mathbb{R}^n$ and $\mathcal{U}^{(i)}$ be open neighborhoods of $\mathbb{R}^{m(n+1)^{i}}$ for $i\in \mathbb{N}$. We express a $k$-th order system of PDEs as a function $\boldsymbol F:I\times \mathcal{V} \times \mathcal{U}^{(0)} \times \cdots \times \mathcal{U}^{(k)} \rightarrow  \mathbb{R}^m$,
\begin{equation*}{\boldsymbol F}\contdep{\boldsymbol u}{t,\boldsymbol x}:={\boldsymbol F}(t,{\boldsymbol x};{\boldsymbol u}(t,{\boldsymbol x}),D^1{\boldsymbol u}(t,{\boldsymbol x}),\dots,D^k{\boldsymbol u}(t,{\boldsymbol x}))=0,\end{equation*}where $t\in I$, ${\boldsymbol x}=(x_1,\dots,x_n)\in \mathcal{V}$ are independent variables and ${\boldsymbol u}:I\times \mathcal{V} \rightarrow \mathbb{R}^m$ are the dependent variables with components ${\boldsymbol u}=(u^1,\dots,u^m)\in \mathcal{U}^{(0)}$ and $D^{i}\boldsymbol u \in \mathcal{U}^{(i)}$ are all partial derivatives of $\boldsymbol u$ up to order $i$ with respect to $t, \boldsymbol x$, for $i=1,\dots,k$. To save writing, we introduced the square bracket notation ${\boldsymbol F}\contdep{\boldsymbol u}{t,\boldsymbol x}$ to denote the value of $\boldsymbol F$, depending on $t, \boldsymbol x,$ and $\boldsymbol u, D^{1}\boldsymbol u, \dots, D^{k}\boldsymbol u$, evaluated at $t, \boldsymbol x$.} If $\boldsymbol F$ is an analytic normal system\footnote{The analyticity condition can be relaxed; see normal, nondegenerate system of PDEs in \cite{Olv00}.} \cite{Olv00} \aw{for which the Cauchy-Kovalevskaya existence theorem is applicable}, then there exist local analytic solutions $\boldsymbol u$ on $\mathcal{I}\times \mathcal{V}$ which allow us to differentiate $\boldsymbol u$ as many times as we wish on $\mathcal{I}\times \mathcal{V}$.
\begin{defn}A $l$-th order conservation law of $\boldsymbol F$ is a divergence expression which vanishes on the solutions of $\boldsymbol F$,
\begin{equation*}\rev{\left.\left(\frac{}{}D_t \psi\contdep{\boldsymbol u}{t,\boldsymbol x}+ D_{\boldsymbol x}\cdot \boldsymbol\phi\contdep{\boldsymbol u}{t,\boldsymbol x}
\right)\frac{}{}\right|_{{\boldsymbol F\contdep{\boldsymbol u}{t,\boldsymbol x}}=0}=0,}\end{equation*} for some density function $\psi$ and flux functions $\boldsymbol \phi=(\phi^1, \dots, \phi^n)$ which are analytic on $\mathcal{I}\times \mathcal{V}\times \mathcal{U}^{(0)} \times \cdots \times  \mathcal{U}^{(l)}$.\end{defn}\aw{\noindent Here, $D_t$ is the total derivative with respect to $t$ and the notation  $D_{\boldsymbol x} \cdot \boldsymbol \phi$ means $D_{\boldsymbol x} \cdot \boldsymbol \phi = \sum_{i=1}^n D_{x_i}\phi^{i}$, where $D_{x_i}$ is the total derivative with respect to $x_i$.}

Clearly, adding two conservation laws together yields a conservation law and multiplying by a scalar constant also yields a conservation law. So, the set of conservation laws of $\boldsymbol F$ forms a vector space. However, some conservation laws are not as meaningful as others, for example: density and fluxes which together form differential identities, such as $D_t( u_x) + D_x(u_t)=0$, or density and fluxes which are differential consequences of the PDEs $\boldsymbol F$ itself, such as if $\psi$ and $\phi^i$ are of the form $\sum_{j,\boldsymbol \alpha} A_{j,\boldsymbol \alpha} D_{\boldsymbol \alpha} F^j$ for some analytic $A_{j,\boldsymbol \alpha}$. These two types of conservation laws are classified as \emph{trivial} conservation laws of $\boldsymbol F$. Since we are interested in non-trivial conservation laws, we ``mod out" the trivial conservation laws by considering two conservation laws as equivalent if the difference in their density and fluxes yields a trivial conservation law. It can be shown such relation defines an equivalence relation on the set of conservation laws of $\boldsymbol F$.

\begin{defn}An $m$-tuple of analytic functions $\boldsymbol \lambda:\mathcal{I}\times \mathcal{V} \times \mathcal{U}^{(0)} \times \cdots \times \mathcal{U}^{(r)}\rightarrow \mathbb{R}^m$ is called a $r$-th order conservation law multiplier of $\boldsymbol F$ if there exists a density function $\psi$ and flux functions $\boldsymbol \phi$ which are analytic on $\mathcal{I}\times \mathcal{V}\times \mathcal{U}^{(0)} \times \cdots \times \mathcal{U}^{(l)}$ such that,
\begin{equation*} \rev{\boldsymbol \lambda\contdep{\boldsymbol u}{t,\boldsymbol x} \cdot \boldsymbol F\contdep{\boldsymbol u}{t,\boldsymbol x} = D_t \psi\contdep{\boldsymbol u}{t,\boldsymbol x}+ D_{\boldsymbol x} \cdot \boldsymbol \phi\contdep{\boldsymbol u}{t,\boldsymbol x},}\end{equation*} holds on $\mathcal{I}\times \mathcal{V}\times \mathcal{U}^{(0)} \times \cdots \times \mathcal{U}^{(l)}$, where $l=max(r,k)-1$ and the $``\cdot"$ symbol denotes the usual dot product. \end{defn}

It follows that the existence of a conservation law multiplier of $\boldsymbol F$ implies the existence of a conservation law of $\boldsymbol F$. The converse is also true if the PDE system $\boldsymbol F$ is an analytic normal system.

\begin{thm}[\cite{Olv00}]
Let $\boldsymbol F$ be an analytic normal system and suppose a conservation law of $\boldsymbol F$ is given by the densities $\psi$ and fluxes $\boldsymbol \phi$. Then there exists a conservation law multiplier $\boldsymbol \lambda$ such that,
\begin{equation*} \rev{\boldsymbol \lambda\contdep{\boldsymbol u}{t,\boldsymbol x} \cdot \boldsymbol F\contdep{\boldsymbol u}{t,\boldsymbol x} = D_t \widetilde{\psi}\contdep{\boldsymbol u}{t,\boldsymbol x}+ D_{\boldsymbol x} \cdot \widetilde{\boldsymbol \phi}\contdep{\boldsymbol u}{t,\boldsymbol x},}\end{equation*} holds on $\mathcal{I}\times \mathcal{V}\times \mathcal{U}^{(0)} \times \cdots \times \mathcal{U}^{(l)}$, where $\widetilde{\psi}$ and $\widetilde{\boldsymbol \phi}$ are equivalent density and fluxes to $\psi$ and $\boldsymbol \phi$, respectively. \label{equiThm}
\end{thm}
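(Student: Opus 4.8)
The plan is to establish the converse of the forward implication already noted above: starting from a divergence expression that vanishes on solutions, I would first show it lies in the ``differential ideal'' generated by the components $F^j$ of $\boldsymbol F$, and then integrate by parts to peel the total derivatives off each $F^j$, thereby reading off the multiplier $\boldsymbol\lambda$ together with an equivalent density and flux. Writing $P := D_t\psi + D_{\boldsymbol x}\cdot\boldsymbol\phi$ for the total-divergence expression of the given conservation law, the hypothesis is precisely that $P$ vanishes on every solution of $\boldsymbol F=0$, that is, on the prolongation of every solution into the jet variables.

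The crux is the first step: representing $P$ as a differential consequence of $\boldsymbol F$. Because $\boldsymbol F$ is an analytic normal system, after a suitable change of independent variables the Cauchy--Kovalevskaya theorem lets me solve $\boldsymbol F=0$ for a distinguished set of leading derivatives, say $\partial_t^{k}\boldsymbol u = \boldsymbol G[\ldots]$, and prolong this to solve for all their total derivatives as well. Thus on jet space the remaining parametric derivative coordinates are free, and for each analytic choice of Cauchy data there is a local analytic solution realizing any prescribed values of those parametric coordinates. A function vanishing on all such solutions must therefore vanish identically once the leading derivatives are eliminated; by a Hadamard/Taylor expansion in the analytic category about the solution variety one obtains
\begin{equation*} P = D_t\psi + D_{\boldsymbol x}\cdot\boldsymbol\phi = \sum_{j,\boldsymbol\alpha} A_{j,\boldsymbol\alpha}\, D_{\boldsymbol\alpha} F^j, \end{equation*}
for analytic coefficients $A_{j,\boldsymbol\alpha}$, the sum being finite since $P$ depends on finitely many jet coordinates.

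Next I would integrate by parts. For each term the total-derivative Leibniz rule gives $A\, D_{\boldsymbol\alpha}F^j = (-1)^{|\boldsymbol\alpha|}(D_{\boldsymbol\alpha}A)\,F^j + (\text{total divergence})$, where the divergence remainder has density and flux components that are themselves finite linear combinations of lower-order $D_{\boldsymbol\beta}F^j$ with analytic coefficients. Summing over $j$ and $\boldsymbol\alpha$ yields
\begin{equation*} D_t\psi + D_{\boldsymbol x}\cdot\boldsymbol\phi = \boldsymbol\lambda\cdot\boldsymbol F + D_t W^0 + D_{\boldsymbol x}\cdot\boldsymbol W, \qquad \lambda_j := \sum_{\boldsymbol\alpha}(-1)^{|\boldsymbol\alpha|} D_{\boldsymbol\alpha} A_{j,\boldsymbol\alpha}. \end{equation*}
Setting $\widetilde\psi := \psi - W^0$ and $\widetilde{\boldsymbol\phi} := \boldsymbol\phi - \boldsymbol W$ gives $\boldsymbol\lambda\cdot\boldsymbol F = D_t\widetilde\psi + D_{\boldsymbol x}\cdot\widetilde{\boldsymbol\phi}$, which is exactly the defining relation of a multiplier. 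Since $W^0$ and each component of $\boldsymbol W$ are of the form $\sum_{j,\boldsymbol\beta}\widetilde A_{j,\boldsymbol\beta}\, D_{\boldsymbol\beta}F^j$, the pair $(W^0,\boldsymbol W)$ is a trivial conservation law of the second kind, so $(\widetilde\psi,\widetilde{\boldsymbol\phi})$ is equivalent to $(\psi,\boldsymbol\phi)$, as claimed.

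I expect the second paragraph above to be the main obstacle: the passage from ``$P$ vanishes on all solutions'' to ``$P$ is a differential consequence of $\boldsymbol F$'' is exactly where the normal Cauchy--Kovalevskaya hypothesis and the analyticity are indispensable, since together they guarantee enough solutions to force the ideal membership and to legitimize the Taylor/Hadamard representation; relaxing analyticity to the normal, nondegenerate setting of \cite{Olv00} would require replacing this expansion by the corresponding formal jet-space argument. By contrast, the integration-by-parts step is purely formal bookkeeping, and the verification that $(W^0,\boldsymbol W)$ is trivial is immediate once the flux remainder is tracked through that bookkeeping.
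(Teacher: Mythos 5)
The paper does not prove this statement at all: it is quoted verbatim from \cite{Olv00} and used as a black box, so there is no in-paper proof to compare against. Your proposal is, in substance, a correct reconstruction of the argument given in that reference: (i) use analyticity, normality and the Cauchy--Kovalevskaya theorem to show that any expression vanishing on all solutions lies in the differential ideal generated by the prolonged system, i.e.\ $D_t\psi+D_{\boldsymbol x}\cdot\boldsymbol\phi=\sum_{j,\boldsymbol\alpha}A_{j,\boldsymbol\alpha}D_{\boldsymbol\alpha}F^j$; (ii) integrate by parts to extract $\boldsymbol\lambda\cdot\boldsymbol F$ plus a divergence whose density and fluxes are themselves differential consequences of $\boldsymbol F$, hence a trivial conservation law of the second kind in the paper's terminology; (iii) absorb that trivial part into $\widetilde\psi,\widetilde{\boldsymbol\phi}$. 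You also correctly identify the ideal-membership step as the only place where the normality and analyticity hypotheses do real work (realizing arbitrary parametric jet data by actual solutions, then Taylor/Hadamard expanding in the principal coordinates); the rest is formal bookkeeping, and your formula $\lambda_j=\sum_{\boldsymbol\alpha}(-1)^{|\boldsymbol\alpha|}D_{\boldsymbol\alpha}A_{j,\boldsymbol\alpha}$ is the standard adjoint expression. I see no gap beyond the expected level of detail for a sketch.
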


Thus, Theorem \ref{equiThm} provides a recipe to find conservation laws of PDEs. In other words, given a fixed order $r$, one can in principle find all conservation law of order $r$ by first finding the set of conservation law multipliers (which might be empty for a given $r$), and then compute their associated densities and fluxes. Conservation law multipliers can be computed by using the method of Euler operator \cite{Olv00,BluAnc10}. Several techniques on computing the density and fluxes associated with a given conservation law multiplier have also been discussed in \cite{BluAnc10}.

\section{Conservative finite difference schemes for PDEs}
\label{sec:consFD}
We now describe the theory on how to use conservation law multipliers and their associated density and fluxes to construct conservative finite difference schemes. \aw{For clarity of presentation, we have restricted to the case of a uniform spatial mesh and uniform time step, though in principle, extension to non-uniform grids and curved domains is possible. 

The main idea is the following: given a system of partial differential equations with a conservation law, we propose a consistent discretization for the equations by approximating its associated conservation law multipliers, density and fluxes. Moreover, we show the proposed discretization satisfies a discrete divergence theorem. In particular, this implies exact conservation of discrete densities in a manner analogous to the telescoping sum for the finite volume method. 

There are three parts in this section. First, we will present the main results in the scalar case. Second, we then generalize to the vectorial case when the number of conservation laws is the same as the number of equations. Finally, we further generalize the vectorial case when the number of conservation laws is less than the number of equations. The case when the number of conservation laws is greater than the number of equations requires a more delicate treatment; we will elaborate in the conclusion.\\

\rev{
\noindent \textbf{Notation.} In this section, we let $\mathcal{I}^\tau=\{k\tau: k=0,\dots, N:=\lfloor T/\tau \rfloor\}$ be uniform time steps and denote $\tau$ as the time step size. We denote a uniform mesh $\Omega^h:=\Omega \cap \mathbb{R}^{n,h}$, where $\mathbb R^{n,h}=\{h \vec{z}\in \mathbb{R}^n: \vec{z}\in \mathbb{Z}^n\}$ are the lattice points in $\mathbb{R}^n$ and $h$ is the spatial mesh size. To avoid technicality arising from boundary points of $\Omega$ not being on $\Omega^h$, we shall assume $h$ to be sufficiently small such that the discretization error of $\partial \Omega$ with $\partial \Omega^h$ to be negligible.
Denote the mesh points of $\Omega^h$ by ${\boldsymbol x}_J=(x_{j_1},\dots,x_{j_n})$, where $J=(j_1,\dots,j_n)\in \mathbb{Z}^n$ is a mesh multi-index if ${\boldsymbol x}_J \in \Omega^h$. Specifically, given a mesh multi-index $J$, by fixing $J_i=(j_1,\dots j_{i-1}, j_{i+1}, \dots, j_n)$, then $j_i$ can range over a union of $M_{J_i}$ consecutive indices $\{\alpha_{J_i,l}, \alpha_{J_i,l}+1, \dots, \beta_{J_i,l}-1 , \beta_{J_i,l}\}$, for $l=1,\dots, M_{J_i}$; i.e. $j_i \in \bigcup_{l=1}^{M_{J_i}} \{\alpha_{J_i,l},\dots,\beta_{J_i,l}\}$. In particular, if $\Omega$ is convex, then $M_{J_i}=1$ and $\alpha_{J_i}\leq j_i \leq \beta_{J_i}$. Thus, the boundary of $\Omega^h$ can then be defined as, 
\begin{equation}
\partial\Omega^h=\bigcup_{i=1}^n \{\boldsymbol x_J\in \Omega^h : J=(j_1,\dots,j_n) \text{ such that for some }l \in \{1,\dots,M_{J_i}\},  j_i=\alpha_{J_i,l}\text{ or }\beta_{J_i,l}\}. \label{boundary}
\end{equation}  We use $\hat{i}$ to denote the multi-index $(0,\dots,1,\dots,0)$ with $1$ at the $i$-th component.

Since finite difference discretizations are expressions which depend on values defined on mesh points, we denote the collection of mesh values of a discrete function $\boldsymbol u^{\tau, h}: \mesh \rightarrow \mathbb{R}^m$ as,
$$
\{\boldsymbol u^{\tau,h}\} := (\dots, \boldsymbol u_{k+1,J}, \dots, \boldsymbol u_{k,J+\hat{i}}, \boldsymbol u_{k,J}, \boldsymbol u_{k,J-\hat{i}}, \dots, \boldsymbol u_{k-1,J}, \dots),
$$ and denote a finite difference discretization $\boldsymbol F^{\tau,h}$ depending on $\{\boldsymbol u^{\tau,h}\}$ at $t_k, \boldsymbol x_J$ as,
$$
\boldsymbol F^{\tau,h}\discdep{\boldsymbol u^{\tau,h} }{k,J} := \boldsymbol F^{\tau,h}(t_k, \boldsymbol x_J;\dots, \boldsymbol u_{k+1,J}, \dots, \boldsymbol u_{k,J+\hat{i}}, \boldsymbol u_{k,J}, \boldsymbol u_{k,J-\hat{i}}, \dots, \boldsymbol u_{k-1,J},\dots).
$$
In order to discuss the consistency of finite difference discretization for $\boldsymbol F^{\tau,h}$, we also denote the expression of $\boldsymbol F^{\tau,h}$ when evaluated on the mesh values of any continuous function $\boldsymbol u\in C(\overline{\mathcal{I}\times\Omega})$ as,
$$
\boldsymbol F^{\tau,h}\discdep{\boldsymbol u}{t_k,\boldsymbol x_J} := \boldsymbol F^{\tau,h}(t_k, \boldsymbol x_J;\dots, \boldsymbol u(t_{k+1},\boldsymbol x_{J}), \dots, \boldsymbol u(t_{k},\boldsymbol x_{J+\hat{i}}), \boldsymbol u(t_k,\boldsymbol x_{J}), \boldsymbol u(t_{k},\boldsymbol x_{J-\hat{i}}), \dots, \boldsymbol u(t_{k-1},\boldsymbol x_{J}), \dots).
$$
Similarly, we shall use the same notations for finite difference discretizations of multipliers $\boldsymbol \lambda^{\tau, h}$, total derivative of the density $D_t^{\tau, h} \psi$ and total derivative of the fluxes $D_{\boldsymbol x}^{\tau, h}\boldsymbol \phi$.
Often to save writing, we will omit the subscripts $k,J$ or $t_k, \boldsymbol x_J$ when the specific mesh point is clear from the context.
}
}
\subsection{Scalar PDE with one conservation law}
\label{sec:scalar}
\rev{
We first consider the scalar case when $\boldsymbol u$ has only one component $u$,
\begin{equation}F\contdep{u}{t,\boldsymbol x}=0, \text{ in } t\in \mathcal{I}, {\boldsymbol x} \in \Omega, \label{scalarPDE} \end{equation} with one conservation law. In particular, we have a conservation law multiplier $\lambda$ with their associated densities $\psi$ and fluxes ${\boldsymbol\phi}$ satisfying,
\begin{equation*} \lambda\contdep{u}{t,\boldsymbol x} \cdot F\contdep{u}{t,\boldsymbol x} = D_t \psi\contdep{u}{t,\boldsymbol x}+ D_{\boldsymbol x}\cdot {\boldsymbol\phi}\contdep{u}{t,\boldsymbol x}.\end{equation*}

\begin{thm} Let $u \in C^{\max(p,q)+\max(r,k)}(\overline{\mathcal{I}\times \Omega})$\footnote{\rev{Recall, we assumed $\lambda$ has up to $r$-th order derivatives and $F$ has up to $k$-th order derivatives. Moreover, additional $\max(p,q)$ derivatives are necessary to state consistency for $\lambda^{\tau,h}$, $D^{\tau,h}_t\psi$ and $D_{\boldsymbol x}^{\tau, h}\cdot \boldsymbol\phi$ required for the use of Taylor's Theorem.}} and $\lambda$ be a conservation law multiplier of \eqref{scalarPDE} with corresponding density $\psi$ and fluxes $\boldsymbol \phi$. Suppose $\lambda^{\tau, h}$, $D_t^{\tau, h}\psi$ and $D_{\boldsymbol x}^{\tau, h}\cdot \boldsymbol\phi$ are finite difference discretizations of $\lambda$, $D_t\psi$ and $D_{\boldsymbol x}\cdot \boldsymbol\phi$ with accuracy up to order $p$ in space and $q$ in time:
\begin{align*} 
\norm{\lambda\cont{u}-\lambda^{\tau, h}\disc{u}}_{l^\infty(\mesh)} & \leq C_\lambda(h^p+\tau^q), \\
\norm{D_t\psi\cont{u}-D_t^{\tau, h}\psi\disc{u}}_{l^\infty(\mesh)}  & \leq C_t(h^p+\tau^q), \\
\norm{D_{\boldsymbol x}\cdot \boldsymbol\phi\cont{u}-D_{\boldsymbol x}^{\tau, h}\cdot \boldsymbol\phi\disc{u}}_{l^\infty(\mesh)}  & \leq C_x(h^p+\tau^q).
\end{align*} 
Let $u^{\tau,h}:\mesh\rightarrow \mathbb{R}$ be a discrete scalar function. Suppose ${(\lambda^{\tau, h})}^{-1}\disc{u^{\tau,h}}$, the multiplicative inverse of $\lambda^{\tau, h}\disc{u^{\tau,h}}$, exists on $\mesh$ and that $\norm{{(\lambda^{\tau, h})}^{-1}\disc{u^{\tau,h}}}_{l^\infty(\mesh)}\leq \gamma<\infty$, with $\gamma$ independent of $\tau, h$.
Define the finite difference discretization of $F$ as,
\begin{equation} F^{\tau, h}\disc{u^{\tau,h}}:={(\lambda^{\tau, h})}^{-1}\disc{u^{\tau,h}}\left(D_t^{\tau, h}\psi\disc{u^{\tau,h}}+D_{\boldsymbol x}^{\tau, h}\cdot \boldsymbol\phi\disc{u^{\tau,h}}\right).\label{discScalarF}\end{equation} Then there exists a constant $C>0$ independent of $h,\tau$ such that,
\begin{equation*} \norm{F\cont{u}-F^{\tau, h}\disc{u}}_{l^\infty(\mesh)} \leq C(h^p+\tau^q)\end{equation*}\label{scalarThm}
\end{thm}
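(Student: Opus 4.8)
The plan is to turn the consistency estimate into an algebraic stability estimate for a quotient, by exploiting the \emph{defining} identity of the multiplier. That identity, $\lambda\cont{u}\,F\cont{u}=D_t\psi\cont{u}+D_{\boldsymbol x}\cdot\boldsymbol\phi\cont{u}$, holds identically in the jet variables, hence for the mesh values of \emph{any} sufficiently smooth $u$, not merely solutions. Writing $G\cont{u}:=D_t\psi\cont{u}+D_{\boldsymbol x}\cdot\boldsymbol\phi\cont{u}$ and $G^{\tau,h}\disc{u}:=D_t^{\tau,h}\psi\disc{u}+D_{\boldsymbol x}^{\tau,h}\cdot\boldsymbol\phi\disc{u}$, I can recover the continuous operator as $F\cont{u}=\lambda\cont{u}^{-1}G\cont{u}$ wherever $\lambda\cont{u}\neq0$, whereas by definition \eqref{discScalarF} reads $F^{\tau,h}\disc{u}=(\lambda^{\tau,h})^{-1}\disc{u}\,G^{\tau,h}\disc{u}$. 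The whole proof then reduces to estimating the difference of two products of the form (inverse multiplier)$\times$(density-plus-flux term), applying the boundedness hypothesis with $u^{\tau,h}$ taken to be the mesh restriction $\disc{u}$ of $u$.

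First I would show the \emph{continuous} multiplier is bounded away from zero on the mesh. Since $\norm{(\lambda^{\tau,h})^{-1}\disc{u}}_{l^\infty(\mesh)}\leq\gamma$, at every mesh point $|\lambda^{\tau,h}\disc{u}|\geq 1/\gamma$; combining this with the consistency bound $|\lambda\cont{u}-\lambda^{\tau,h}\disc{u}|\leq C_\lambda(h^p+\tau^q)$ and the reverse triangle inequality gives $|\lambda\cont{u}|\geq 1/\gamma-C_\lambda(h^p+\tau^q)\geq 1/(2\gamma)$ once $h,\tau$ are small enough that $C_\lambda(h^p+\tau^q)\leq 1/(2\gamma)$. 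Hence $\lambda\cont{u}^{-1}$ exists on the mesh with $|\lambda\cont{u}^{-1}|\leq 2\gamma$, which legitimises the representation $F\cont{u}=\lambda\cont{u}^{-1}G\cont{u}$.

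Next I would split the error by adding and subtracting the hybrid term $(\lambda^{\tau,h})^{-1}\disc{u}\,G\cont{u}$:
\begin{equation*}
F\cont{u}-F^{\tau,h}\disc{u}=G\cont{u}\left(\lambda\cont{u}^{-1}-(\lambda^{\tau,h})^{-1}\disc{u}\right)+(\lambda^{\tau,h})^{-1}\disc{u}\left(G\cont{u}-G^{\tau,h}\disc{u}\right).
\end{equation*}
For the first term I would write $\lambda\cont{u}^{-1}-(\lambda^{\tau,h})^{-1}\disc{u}=\bigl(\lambda^{\tau,h}\disc{u}-\lambda\cont{u}\bigr)/\bigl(\lambda\cont{u}\,\lambda^{\tau,h}\disc{u}\bigr)$, bounding the numerator by $C_\lambda(h^p+\tau^q)$ and the denominator below by $(1/(2\gamma))(1/\gamma)$, while $|G\cont{u}|$ is controlled by a constant $M$ independent of $h,\tau$: indeed $G=\lambda F$ as continuous functions, so its $l^\infty$-norm on the mesh is at most $\norm{\lambda\cont{u}F\cont{u}}_{L^\infty(\overline{\mathcal I\times\Omega})}=:M<\infty$ by the smoothness of $u$ on the compact closure. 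For the second term I would use $|(\lambda^{\tau,h})^{-1}\disc{u}|\leq\gamma$ and $|G\cont{u}-G^{\tau,h}\disc{u}|\leq(C_t+C_x)(h^p+\tau^q)$. Taking $l^\infty$-norms and collecting constants yields $\norm{F\cont{u}-F^{\tau,h}\disc{u}}_{l^\infty(\mesh)}\leq\bigl(2M\gamma^2C_\lambda+\gamma(C_t+C_x)\bigr)(h^p+\tau^q)$, so $C:=2M\gamma^2C_\lambda+\gamma(C_t+C_x)$ is the desired constant.

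The hard part will be the nonlinear inversion step: one must guarantee that dividing by $\lambda$ does not inflate the error order, and this is precisely where the uniform bound $\gamma$ on the discrete inverse is indispensable, since it simultaneously provides a uniform \emph{lower} bound on $|\lambda^{\tau,h}\disc{u}|$ (hence on $|\lambda\cont{u}|$) and a uniform \emph{upper} bound on $|(\lambda^{\tau,h})^{-1}\disc{u}|$, ensuring no hidden $h,\tau$-dependence enters $C$. The regularity hypothesis $u\in C^{\max(p,q)+\max(r,k)}(\overline{\mathcal I\times\Omega})$ is what renders $M$ finite and what supplies, via Taylor's theorem, the three consistency estimates the argument rests upon.
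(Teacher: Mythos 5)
Your argument is correct in substance, but it is organized differently from the paper's proof, and the difference matters slightly. The paper never inverts the \emph{continuous} multiplier: it writes $F\cont{u}-F^{\tau,h}\disc{u}={(\lambda^{\tau,h})}^{-1}\disc{u}\bigl(\lambda^{\tau,h}\disc{u}F\cont{u}-D_t^{\tau,h}\psi\disc{u}-D_{\boldsymbol x}^{\tau,h}\cdot\boldsymbol\phi\disc{u}\bigr)$, inserts the identity $\lambda\cont{u}F\cont{u}=D_t\psi\cont{u}+D_{\boldsymbol x}\cdot\boldsymbol\phi\cont{u}$ to replace the discrete density/flux terms, and bounds the three resulting differences directly, obtaining $C=\gamma(C'C_\lambda+C_t+C_x)$ with no restriction on $h,\tau$. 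You instead represent both $F\cont{u}$ and $F^{\tau,h}\disc{u}$ as quotients and estimate a difference of quotients, which forces you to first prove $|\lambda\cont{u}|\geq 1/(2\gamma)$ via the reverse triangle inequality; that step is valid but only once $C_\lambda(h^p+\tau^q)\leq 1/(2\gamma)$, so as written your bound holds only for sufficiently small $h,\tau$, whereas the theorem (and the paper's proof) asserts it unconditionally. This is easily repaired --- in the complementary regime $h^p+\tau^q>1/(2\gamma C_\lambda)$ one has $|F\cont{u}-F^{\tau,h}\disc{u}|\leq |F\cont{u}|+\gamma|G^{\tau,h}\disc{u}|\leq C'+\gamma M+\gamma(C_t+C_x)(h^p+\tau^q)$, and the constant part is absorbed by multiplying by $2\gamma C_\lambda(h^p+\tau^q)\geq 1$ --- but you should say so, or better, adopt the paper's factorization, which sidesteps the issue entirely by only ever using the discrete inverse whose boundedness is hypothesized. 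The two routes yield the same order of accuracy with slightly different constants ($2M\gamma^2C_\lambda+\gamma(C_t+C_x)$ versus $\gamma(C'C_\lambda+C_t+C_x)$, where $M=\norm{\lambda\cont{u}F\cont{u}}_{L^\infty(\overline{\mathcal{I}\times\Omega})}$ and $C'=\norm{F\cont{u}}_{L^\infty(\overline{\mathcal{I}\times\Omega})}$); what your version buys is the explicit observation that the hypothesis on ${(\lambda^{\tau,h})}^{-1}$ forces the continuous multiplier itself to be uniformly nonvanishing on the mesh, which is a useful remark but not needed for the estimate.
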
\begin{proof}Fix $(t_k,\boldsymbol x_J)\in \mesh$. Since $\lambda$ is a conservation law multiplier of $F$ and by definition of $F^{\tau, h}$,
\begin{align}
F\cont{u}&-F^{\tau,h}\disc{u} \nonumber \\
&= {(\lambda^{\tau, h})}^{-1}\disc{u}\left( \lambda^{\tau, h}\disc{u}F\cont{u}-D_t^{\tau, h}\psi\disc{u}-D_{\boldsymbol x}^{\tau, h}\cdot \boldsymbol\phi\disc{u}\right) \nonumber \\
&={(\lambda^{\tau, h})}^{-1}\disc{u}\left( (\lambda^{\tau, h}\disc{u}-\lambda\cont{u})F\cont{u}+\left(D_t \psi\cont{u}-D_t^{\tau, h}\psi\disc{u}\right)+\left(D_{\boldsymbol x}\cdot \boldsymbol\phi\cont{u}-D_{\boldsymbol x}^{\tau, h}\cdot \boldsymbol\phi\disc{u}\right)\right) \label{t1e1}
\end{align}
Since $u\in C^{\max(p,q)+\max(r,k)}(\overline{\mathcal{I}\times \Omega})$ and $F[u]$ is continuous in its arguments, $F\contdep{u}{t,\boldsymbol x}$ is continuous on $\overline{\mathcal{I}\times \Omega}$ and thus $|F\contdep{u}{t,\boldsymbol x}|\leq C'$ uniformly on $\overline{\mathcal{I}\times \Omega}$. By \eqref{t1e1},
\begin{align}
\left| F\cont{u}-F^{\tau,h}\disc{u}\right| &\leq |{(\lambda^{\tau, h})}^{-1}\disc{u}|\left( \left|\lambda^{\tau, h}\disc{u}-\lambda\cont{u}\right| |F\cont{u}|+\left|D_t \psi\cont{u}-D_t^{\tau, h}\psi\disc{u}\right|+\left|D_{\boldsymbol x}\cdot \boldsymbol\phi\cont{u}-D_{\boldsymbol x}^{\tau, h}\cdot \boldsymbol\phi\disc{u}\right|\right) \nonumber \\
&\leq \gamma(C'C_{\lambda}(h^p+\tau^q))+C_{t}(h^p+\tau^q)+C_x(h^p+\tau^q)) = C(h^p+\tau^q), \label{t1e2}
\end{align} where $\displaystyle C:=\gamma\left(C'C_{\lambda}+C_{t}+C_{x}\right)$. Since this is true for arbitrary mesh points $(t_k,\boldsymbol x_J)\in \mesh$, taking the maximum of \eqref{t1e2} over all points in $\mesh$ gives the desired result.

\end{proof} In other words, Theorem \ref{scalarThm} says the discretization given by \eqref{discScalarF} is a consistent discretization of \eqref{scalarPDE}, provided $\lambda^{\tau, h}\disc{u^{\tau,h}}$ does not vanish on some $(t_k,\boldsymbol x_J)\in \mesh$. We will return in Section \ref{sec:singularMult} to address the issue of when $\lambda^{\tau, h}\disc{u^{\tau,h}}$ do vanish. Next we show that the finite difference discretization $F^{\tau, h}$ is conservative in the following sense.
\begin{thm}
Let $\lambda$ be a conservation law multiplier of \eqref{scalarPDE} with density $\psi$ and fluxes $\boldsymbol \phi$ and let $\lambda^{\tau, h}$ be a finite difference discretizations of $\lambda$. Let $u^{\tau,h}:\mesh\rightarrow \mathbb{R}$ be a discrete scalar function. Define $D_t^{\tau, h}\psi$ and $D_{\boldsymbol x}^{\tau, h}\cdot \boldsymbol\phi$ to be the following discretizations,
\begin{subequations}
\begin{align}
D_t^{\tau, h}\psi\discdep{u^{\tau,h}}{k,J} &= \frac{\psi^{\tau, h}\discdep{u^{\tau,h}}{k,J}-\psi^{\tau, h}\discdep{u^{\tau,h}}{k-1,J}}{\tau}, \label{dtScalar} \\
D_{\boldsymbol x}^{\tau, h}\cdot \boldsymbol\phi\discdep{u^{\tau,h}}{k,J} &= \sum_{i=1}^n\frac{\phi^{i,\tau, h}\discdep{u^{\tau,h}}{k,J}-\phi^{i,\tau, h}\discdep{u^{\tau,h}}{k,J-\hat{i}}}{h}, \label{dxScalar}
\end{align}
\end{subequations}where $\psi^{\tau, h}$ and $\phi^{i,\tau, h}$ are finite difference discretizations of $\psi$ and $\phi^i$. Also let $F^{\tau, h}$ be the corresponding discretization of \eqref{scalarPDE} given by \eqref{discScalarF}. If $u^{\tau,h}$ is a solution to the discrete problem, \begin{equation}F^{\tau, h}\discdep{u^{\tau,h}}{k,J}=0,\hspace{3mm} {\boldsymbol x}_J\in \Omega^h, \label{discScalar}\end{equation} then the discrete divergence theorem for $u^{\tau,h}$ holds,
\begin{align*}
0=\frac{1}{\tau}\sum_{{\boldsymbol x}_J \in \Omega^h} \left(\psi^{\tau, h}\discdep{u^{\tau,h}}{k,J}-\psi^{\tau, h}\discdep{u^{\tau,h}}{k-1,J}\right)+\frac{1}{h} \sum_{{\boldsymbol x}_J \in \partial\Omega^h} \boldsymbol \phi^{\tau, h}\discdep{u^{\tau,h}}{k,J} \cdot \boldsymbol \nu^J, 
\end{align*}where $\boldsymbol \nu^J=(\nu_1^J,\dots,\nu_n^J)$ is the outward-pointing unit normal of $\partial\Omega^h$ at $\boldsymbol x_J$. \label{scalarCL}
\end{thm}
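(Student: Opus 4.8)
The plan is to exploit the multiplicative invertibility of $\lambda^{\tau,h}$ to turn the discrete equation into a pure discrete divergence, and then to sum that divergence over the mesh and telescope. First I would note that since $(\lambda^{\tau,h})^{-1}\disc{u^{\tau,h}}$ exists at every point of $\mesh$, the hypothesis $F^{\tau,h}\discdep{u^{\tau,h}}{k,J}=0$ together with the definition \eqref{discScalarF} forces the bracketed factor to vanish; hence at every $\boldsymbol x_J\in\Omega^h$ we have $D_t^{\tau,h}\psi\discdep{u^{\tau,h}}{k,J}+D_{\boldsymbol x}^{\tau,h}\cdot\boldsymbol\phi\discdep{u^{\tau,h}}{k,J}=0$. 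Summing this identity over all $\boldsymbol x_J\in\Omega^h$ and separating the two discretizations \eqref{dtScalar}--\eqref{dxScalar} produces exactly the density term $\frac1\tau\sum_{\boldsymbol x_J\in\Omega^h}(\psi^{\tau,h}\discdep{u^{\tau,h}}{k,J}-\psi^{\tau,h}\discdep{u^{\tau,h}}{k-1,J})$ plus the summed discrete spatial divergence. The density term already matches the desired expression verbatim, so the whole problem reduces to a discrete divergence theorem for the flux term.

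For the flux term I would prove the discrete identity $\sum_{\boldsymbol x_J\in\Omega^h}\frac1h\sum_{i=1}^n(\phi^{i,\tau,h}\discdep{u^{\tau,h}}{k,J}-\phi^{i,\tau,h}\discdep{u^{\tau,h}}{k,J-\hat i})=\frac1h\sum_{\boldsymbol x_J\in\partial\Omega^h}\boldsymbol\phi^{\tau,h}\discdep{u^{\tau,h}}{k,J}\cdot\boldsymbol\nu^J$ by a summation-by-parts/telescoping argument carried out one coordinate direction at a time. Fixing a direction $i$ and a transverse multi-index $J_i$, the backward difference in $j_i$ telescopes over each consecutive admissible block $\{\alpha_{J_i,l},\dots,\beta_{J_i,l}\}$, leaving only the flux at the top index $\beta_{J_i,l}$ and minus the flux associated with $\alpha_{J_i,l}-1$. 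By the definition \eqref{boundary} of $\partial\Omega^h$, these surviving terms sit precisely at the boundary mesh points in direction $i$, and their signs are exactly the corresponding components $\nu_i^J=\pm1$ of the outward-pointing normal. Summing the telescoped contributions over all transverse slices, all blocks $l=1,\dots,M_{J_i}$, and all directions $i$ reassembles $\frac1h\sum_{\partial\Omega^h}\boldsymbol\phi^{\tau,h}\cdot\boldsymbol\nu^J$, which combined with the density term yields the asserted discrete divergence theorem.

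The main obstacle I expect is the bookkeeping in this telescoping step rather than any analytic difficulty: one must match the surviving one-sided flux values to the outward-normal convention carefully, in particular being consistent about whether the flux attached to a min-face boundary node is the value $\phi^{i,\tau,h}\discdep{\cdot}{k,J}$ or the neighbouring value $\phi^{i,\tau,h}\discdep{\cdot}{k,J-\hat i}$, so that the orientation sign $\nu_i^J$ comes out correctly. This is genuinely delicate in the non-convex case, where for a fixed $J_i$ the index $j_i$ ranges over several disjoint blocks ($M_{J_i}>1$): each block contributes its own pair of boundary terms, so every face separating $\Omega^h$ from its complement must be accounted for, and one should verify that \eqref{boundary} enumerates exactly these faces without double counting at corner nodes that are extremal in more than one direction. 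Once the orientation convention is pinned down to agree with $\boldsymbol\nu^J$, the remainder is a routine telescoping sum and the result follows.
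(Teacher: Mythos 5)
Your proposal is correct and follows essentially the same route as the paper: use the invertibility of $\lambda^{\tau,h}$ (equivalently, multiply the discrete equation by $\lambda^{\tau,h}$) to reduce \eqref{discScalar} to the pointwise discrete divergence identity, sum over $\Omega^h$, and telescope the backward differences direction-by-direction over each consecutive index block so that only boundary fluxes weighted by $\nu_i^J=\pm1$ survive. The one bookkeeping subtlety you flag — whether the min-face contribution is $\phi^{i,\tau,h}\discdep{\cdot}{k,J}$ at $j_i=\alpha_{J_i,l}$ or the neighbouring value at $\alpha_{J_i,l}-1$ — is exactly the point the paper resolves by taking the inner telescoping sum over $j_i=\alpha_{J_i,l}+1,\dots,\beta_{J_i,l}$, so that the surviving terms are $\phi$ at $\beta_{J_i,l}$ and $\alpha_{J_i,l}$, both genuine points of $\partial\Omega^h$ as enumerated by \eqref{boundary}.
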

\begin{proof} Since $u^{\tau,h}$ satisfies \eqref{discScalar} for all $\boldsymbol x_J\in \Omega^h$ and by definition of $F^{\tau, h}$,
\begin{align}
0&=\sum_{{\boldsymbol x}_J \in \Omega^h} \lambda^{\tau, h}\discdep{u^{\tau,h}}{k,J} F^{\tau, h}\discdep{u^{\tau,h}}{k,J} = \sum_{{\boldsymbol x}_J \in \Omega^h} \left(D_t^{\tau, h}\psi\discdep{u^{\tau,h}}{k,J}\right)+\sum_{{\boldsymbol x}_J \in \Omega^h}\left(D_{\boldsymbol x}^{\tau, h}\cdot \boldsymbol\phi\discdep{u^{\tau,h}}{k,J}\right) \nonumber \\
&=\frac{1}{\tau}\sum_{{\boldsymbol x}_J \in \Omega^h} \left(\psi^{\tau, h}\discdep{u^{\tau,h}}{k,J}-\psi^{\tau, h}\discdep{u^{\tau,h}}{k-1,J}\right)+\frac{1}{h}\sum_{i=1}^n\sum_{{\boldsymbol x}_J \in \Omega^h} \left(\phi^{i,\tau, h}\discdep{u^{\tau,h}}{k,J}-\phi^{i,\tau, h}\discdep{u^{\tau,h}}{k,J-\hat{i}}\right).  \label{t2e1}
\end{align} For each fixed $i$ and $J_i=(j_1,\dots,j_{i-1},j_{i+1},\dots j_n)$, recall that $j_i \in \bigcup_{l=1}^{M_{J_i}} \{\alpha_{J_i,l},\dots,\beta_{J_i,l}\}$. Thus,
\begin{align*}
&\sum_{{\boldsymbol x}_J \in \Omega^h} \left(\phi^{i,\tau, h}\discdep{u^{\tau,h}}{k,J}-\phi^{i,\tau, h}\discdep{u^{\tau,h}}{k,J-\hat{i}}\right) = \sum_{\substack{j_1,\dots,j_n}} \left(\phi^{i,\tau, h}\discdep{u^{\tau,h}}{k,(j_1,\dots,j_i,\dots, j_n)}-\phi^{i,\tau, h}\discdep{u^{\tau,h}}{k,(j_1,\dots,j_i-1,\dots, j_n)}\right) \\
&= \sum_{j_1,\dots,j_{i-1},j_{i+1},\dots, j_n} \sum_{l=1}^{M_{J_i}}\sum_{j_i=\alpha_{J_i,l}+1}^{\beta_{J_i,l}} \left(\phi^{i,\tau, h}\discdep{u^{\tau,h}}{k,(j_1,\dots,j_i,\dots, j_n)}-\phi^{i,\tau, h}\discdep{u^{\tau,h}}{k,(j_1,\dots,j_i-1,\dots, j_n)}\right)  \\
&= \sum_{j_1,\dots,j_{i-1},j_{i+1},\dots, j_n} \sum_{l=1}^{M_{J_i}}\left(\phi^{i,\tau, h}\discdep{u^{\tau,h}}{k,(j_1,\dots,\beta_{J_i,l},\dots, j_n)}-\phi^{i,\tau, h}\discdep{u^{\tau,h}}{k,(j_1,\dots,\alpha_{J_i,l},\dots, j_n)}\right)\\
&= \sum_{\substack{{\boldsymbol x}_J \in \Omega^h\\ j_i\in \bigcup_{l=1}^{M_{J_i}} \{\alpha_{J_i,l},\beta_{J_i,l}\}}} \phi^{i,\tau, h}\discdep{u^{\tau,h}}{k,J} \nu_i^J.
\end{align*} And so \eqref{t2e1} implies
\begin{align*}
0&=\frac{1}{\tau}\sum_{{\boldsymbol x}_J \in \Omega^h} \left(\psi^{\tau, h}\discdep{u^{\tau,h}}{k,J}-\psi^{\tau, h}\discdep{u^{\tau,h}}{k-1,J}\right)+\frac{1}{h} \sum_{i=1}^n \sum_{\substack{{\boldsymbol x}_J \in \Omega^h\\ j_i\in\bigcup_{l=1}^{M_{J_i}} \{\alpha_{J_i,l},\beta_{J_i,l}\}}} \phi^{i,\tau, h}\discdep{u^{\tau,h}}{k,J} \nu_i^J \\
&=\frac{1}{\tau}\sum_{{\boldsymbol x}_J \in \Omega^h} \left(\psi^{\tau, h}\discdep{u^{\tau,h}}{k,J}-\psi^{\tau, h}\discdep{u^{\tau,h}}{k-1,J}\right)+\frac{1}{h} \sum_{{\boldsymbol x}_J \in \partial\Omega^h} \boldsymbol\phi^{\tau, h}\discdep{u^{\tau,h}}{k,J}\cdot \boldsymbol \nu^J,
\end{align*}where the last equality follows from definition of $\partial \Omega^h$ in \eqref{boundary}.
\end{proof}
{Although it may appear that Theorem \ref{scalarCL} restricts the discrete density and fluxes to be first order accurate, it is possible to obtain higher order accuracy for the density and fluxes, as we will illustrate in the examples.
}
\begin{cor} Let $\psi^{\tau,h}, \boldsymbol\phi^{\tau,h}$ be as given in Theorem \ref{scalarCL} and the discrete scalar function $u^{\tau,h}:\mesh\rightarrow \mathbb{R}$ be a solution to \eqref{discScalar} at $t_k$. If the discretized fluxes $\boldsymbol\phi^{\tau, h}\disc{u^{\tau,h}}$ vanish on the boundary $\partial\Omega^h$, then\begin{align*}
\sum_{{\boldsymbol x}_J \in \Omega^h}\psi^{\tau, h}\discdep{u^{\tau,h}}{k,J}=\sum_{{\boldsymbol x}_J \in \Omega^h}\psi^{\tau, h}\discdep{u^{\tau,h}}{k-1,J}.
\end{align*}
\end{cor}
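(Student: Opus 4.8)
The plan is to obtain the claim as an immediate consequence of the discrete divergence theorem established in Theorem \ref{scalarCL}. Since by hypothesis $u^{\tau,h}$ solves the discrete problem \eqref{discScalar} at $t_k$, all the assumptions of Theorem \ref{scalarCL} are met, so I would begin by simply invoking that theorem to write down
\begin{equation*}
0=\frac{1}{\tau}\sum_{{\boldsymbol x}_J \in \Omega^h} \left(\psi^{\tau, h}\discdep{u^{\tau,h}}{k,J}-\psi^{\tau, h}\discdep{u^{\tau,h}}{k-1,J}\right)+\frac{1}{h} \sum_{{\boldsymbol x}_J \in \partial\Omega^h} \boldsymbol \phi^{\tau, h}\discdep{u^{\tau,h}}{k,J} \cdot \boldsymbol \nu^J.
\end{equation*}

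Next I would use the hypothesis that the discretized fluxes $\boldsymbol\phi^{\tau,h}\disc{u^{\tau,h}}$ vanish on $\partial\Omega^h$. Since each component $\phi^{i,\tau,h}\discdep{u^{\tau,h}}{k,J}$ is zero for every $\boldsymbol x_J\in\partial\Omega^h$, the inner product $\boldsymbol\phi^{\tau,h}\discdep{u^{\tau,h}}{k,J}\cdot\boldsymbol\nu^J$ against the outward unit normal vanishes termwise, so the entire boundary sum is zero regardless of the geometry encoded in $\boldsymbol\nu^J$. This collapses the divergence identity to $0=\frac{1}{\tau}\sum_{{\boldsymbol x}_J\in\Omega^h}\bigl(\psi^{\tau,h}\discdep{u^{\tau,h}}{k,J}-\psi^{\tau,h}\discdep{u^{\tau,h}}{k-1,J}\bigr)$.

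Finally, since $\tau>0$, I would multiply through by $\tau$ to clear the prefactor and then split the resulting single sum of differences into the difference of two sums, yielding $\sum_{\boldsymbol x_J\in\Omega^h}\psi^{\tau,h}\discdep{u^{\tau,h}}{k,J}=\sum_{\boldsymbol x_J\in\Omega^h}\psi^{\tau,h}\discdep{u^{\tau,h}}{k-1,J}$, as desired. There is no genuine obstacle here: the statement is a direct corollary, and the only point requiring any care is the (routine) observation that componentwise vanishing of the discrete flux forces the normal flux $\boldsymbol\phi^{\tau,h}\cdot\boldsymbol\nu^J$ to vanish on the boundary, so that the telescoped boundary contribution from Theorem \ref{scalarCL} drops out entirely.
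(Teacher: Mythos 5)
Your proposal is correct and is exactly the argument the paper intends: the corollary is an immediate consequence of the discrete divergence theorem of Theorem \ref{scalarCL}, with the vanishing of the discrete fluxes on $\partial\Omega^h$ annihilating the boundary sum and the remaining identity rearranged after multiplying by $\tau$. The paper states this corollary without a written proof precisely because the deduction is this routine, so there is nothing to add or correct.
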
 In other words, the proposed scheme has exact discrete conservation.
}

\subsection{System of $m$ PDEs with $m$ conservation laws}
\label{sec:squareSystem}

Next we extend the previous results to systems of PDEs. Consider a $k$-th order system of $m$ PDEs,
\rev{
\begin{equation}{\boldsymbol F}\contdep{\boldsymbol u}{t,\boldsymbol x}=0, \text{ on } t\in \mathcal{I}, {\boldsymbol x} \in \Omega, \label{systemPDE}\end{equation}
with $m$ conservation laws. In particular, we have an $m\times m$ matrix of conservation law multipliers $\Lambda\contdep{\boldsymbol u}{t,\boldsymbol x}$,
\begin{equation}\Lambda\contdep{\boldsymbol u}{t,\boldsymbol x}=\begin{pmatrix}
    \lambda^{11}\contdep{\boldsymbol u}{t,\boldsymbol x} & \dots & \lambda^{1m}\contdep{\boldsymbol u}{t,\boldsymbol x}\\
    \vdots &  & \vdots\\
    \lambda^{m1}\contdep{\boldsymbol u}{t,\boldsymbol x} & \dots & \lambda^{mm}\contdep{\boldsymbol u}{t,\boldsymbol x}\\
  \end{pmatrix}, \label{bigLambda}\end{equation} with their associated densities $\boldsymbol\psi\contdep{\boldsymbol u}{t,\boldsymbol x}$ and fluxes $\Phi\contdep{\boldsymbol u}{t,\boldsymbol x}$,
\begin{align}\boldsymbol \psi\contdep{\boldsymbol u}{t,\boldsymbol x}=\begin{pmatrix}
    \psi^{1}\contdep{\boldsymbol u}{t,\boldsymbol x} \\
    \vdots \\
    \psi^{m}\contdep{\boldsymbol u}{t,\boldsymbol x} \\
\end{pmatrix}, & \text{     } \aw{\Phi\contdep{\boldsymbol u}{t,\boldsymbol x}=\begin{pmatrix}
    \phi^{11}\contdep{\boldsymbol u}{t,\boldsymbol x} & \dots & \phi^{1n}\contdep{\boldsymbol u}{t,\boldsymbol x}\\
    \vdots &  & \vdots\\
    \phi^{m1}\contdep{\boldsymbol u}{t,\boldsymbol x} & \dots & \phi^{mn}\contdep{\boldsymbol u}{t,\boldsymbol x}\\
\end{pmatrix}}, \label{bigPhi}
\end{align} satisfying,
\begin{equation*} \Lambda\contdep{\boldsymbol u}{t,\boldsymbol x} \cdot \boldsymbol F\contdep{\boldsymbol u}{t,\boldsymbol x} = D_t \boldsymbol\psi\contdep{\boldsymbol u}{t,\boldsymbol x}+ D_{\boldsymbol x}\cdot \Phi\contdep{\boldsymbol u}{t,\boldsymbol x}.\end{equation*} \aw{Here the notation $D_{\boldsymbol x} \cdot \Phi$ is analogous to the divergence applied to the tensor $\Phi$, i.e. $(D_{\boldsymbol x} \cdot \Phi)^j = \sum_{j=1}^n D_{x_i}\phi^{ji}$.}

\noindent Analogous to \eqref{discScalarF}, we now show the following discretization of the system of PDEs is consistent provided the discrete multiplier matrix $\Lambda^{\tau,h}\discdep{\boldsymbol u^{\tau,h}}{k,J}$ is invertible on $\mesh$ and $\displaystyle \norm{\norm{\left({\Lambda^{\tau,h}}\right)^{-1}\disc{\boldsymbol u^{\tau,h}}}_{op}}_{l^\infty(\mesh)}$ is bounded above uniformly in $\tau, h$, where $\norm{\cdot}_{op}$ is the induced matrix norm of the vector norm $|\cdot|$.

\begin{thm} Let $\boldsymbol u \in C^{\max(p,q)+\max(r,k)}(\overline{\mathcal{I}\times \Omega})$ and $\Lambda$ in \eqref{bigLambda} be an $m\times m$ matrix of conservation law multipliers of \eqref{systemPDE} with corresponding density $\boldsymbol \psi$ and fluxes $\Phi$, as defined in \eqref{bigPhi}. Suppose $\Lambda^{\tau, h}$, $D_t^{\tau, h}\boldsymbol \psi$ and $D_{\boldsymbol x}^{\tau, h}\cdot \Phi$ are finite difference discretizations of $\Lambda$, $D_t\boldsymbol \psi$ and $D_{\boldsymbol x}\cdot \Phi$ with accuracy up to order $p$ in space and $q$ in time:
\begin{align*} 
\norm{ \norm{\Lambda\cont{\boldsymbol u}-\Lambda^{\tau, h}\disc{\boldsymbol u}}_{op} }_{l^\infty(\mesh)} & \leq C_\Lambda(h^p+\tau^q), \\
\norm{D_t\boldsymbol\psi\cont{\boldsymbol u}-D_t^{\tau, h}\boldsymbol\psi\disc{\boldsymbol u}}_{l^\infty(\mesh)} & \leq C_t(h^p+\tau^q), \\
\norm{D_{\boldsymbol x}\cdot \Phi\cont{\boldsymbol u}-D_{\boldsymbol x}^{\tau, h}\cdot \Phi\disc{\boldsymbol u}}_{l^\infty(\mesh)} & \leq C_x(h^p+\tau^q).
\end{align*} 
Let $\boldsymbol u^{\tau,h}:\mesh\rightarrow \mathbb{R}^m$ be a discrete function. Suppose $\Lambda^{\tau,h}\disc{\boldsymbol u^{\tau,h}}$ is invertible on $\mesh$ and that $\norm{\norm{\left({\Lambda^{\tau,h}}\right)^{-1}\disc{\boldsymbol u^{\tau,h}}}_{op}}_{l^\infty(\mesh)}\leq \gamma<\infty$, with $\gamma$ independent of $\tau, h$.

Define the finite difference discretization of $\boldsymbol F$ as:
\begin{equation} \boldsymbol F^{\tau, h}\discdep{\boldsymbol u^{\tau,h}}:=\left({\Lambda^{\tau,h}}\right)^{-1}\disc{\boldsymbol u^{\tau,h}}\left(D_t^{\tau, h}\boldsymbol\psi\disc{\boldsymbol u^{\tau,h}}+D_{\boldsymbol x}^{\tau, h}\cdot \Phi\disc{\boldsymbol u^{\tau,h}}\right). \label{discSystemF}\end{equation} Then there exists a constant $C>0$ independent of $h,\tau$ such that,
\begin{equation*} \norm{\boldsymbol F\cont{\boldsymbol u}-\boldsymbol F^{\tau, h}\disc{\boldsymbol u}}_{l^\infty(\mesh)} \leq C(h^p+\tau^q).\end{equation*}
\label{systemThm}
\end{thm}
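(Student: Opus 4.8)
The plan is to imitate the scalar argument of Theorem~\ref{scalarThm} essentially verbatim, replacing the multiplicative inverse $(\lambda^{\tau,h})^{-1}$ by the matrix inverse $(\Lambda^{\tau,h})^{-1}$, the scalar $F$ by the vector $\boldsymbol F$, and the absolute value by the induced vector norm $|\cdot|$ together with its operator norm $\norm{\cdot}_{op}$. First I would fix a mesh point $(t_k,\boldsymbol x_J)\in\mesh$. Because $\Lambda$ is a matrix of conservation law multipliers of \eqref{systemPDE}, the identity $\Lambda\cont{\boldsymbol u}\cdot\boldsymbol F\cont{\boldsymbol u}=D_t\boldsymbol\psi\cont{\boldsymbol u}+D_{\boldsymbol x}\cdot\Phi\cont{\boldsymbol u}$ holds pointwise on $\overline{\mathcal I\times\Omega}$. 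Using the definition \eqref{discSystemF} of $\boldsymbol F^{\tau,h}$ and the trivial factorization $\boldsymbol F\cont{\boldsymbol u}=(\Lambda^{\tau,h})^{-1}\disc{\boldsymbol u}\,\Lambda^{\tau,h}\disc{\boldsymbol u}\,\boldsymbol F\cont{\boldsymbol u}$, I would write
\begin{align*}
\boldsymbol F\cont{\boldsymbol u}-\boldsymbol F^{\tau,h}\disc{\boldsymbol u} = (\Lambda^{\tau,h})^{-1}\disc{\boldsymbol u}\Big(\Lambda^{\tau,h}\disc{\boldsymbol u}\,\boldsymbol F\cont{\boldsymbol u}-D_t^{\tau,h}\boldsymbol\psi\disc{\boldsymbol u}-D_{\boldsymbol x}^{\tau,h}\cdot\Phi\disc{\boldsymbol u}\Big).
\end{align*}

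Next I would substitute the continuous multiplier identity into the parenthesis, replacing $D_t\boldsymbol\psi\cont{\boldsymbol u}+D_{\boldsymbol x}\cdot\Phi\cont{\boldsymbol u}$ by $\Lambda\cont{\boldsymbol u}\,\boldsymbol F\cont{\boldsymbol u}$ and regrouping, exactly as in the passage to \eqref{t1e1}. This yields the matrix analogue
\begin{align*}
\boldsymbol F\cont{\boldsymbol u}-\boldsymbol F^{\tau,h}\disc{\boldsymbol u} = (\Lambda^{\tau,h})^{-1}\disc{\boldsymbol u}\Big((\Lambda^{\tau,h}\disc{\boldsymbol u}-\Lambda\cont{\boldsymbol u})\boldsymbol F\cont{\boldsymbol u}+(D_t\boldsymbol\psi\cont{\boldsymbol u}-D_t^{\tau,h}\boldsymbol\psi\disc{\boldsymbol u})+(D_{\boldsymbol x}\cdot\Phi\cont{\boldsymbol u}-D_{\boldsymbol x}^{\tau,h}\cdot\Phi\disc{\boldsymbol u})\Big),
\end{align*}
in which the three bracketed terms are precisely the multiplier, density, and flux consistency errors controlled by the hypotheses.

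To finish, I would take the induced vector norm of both sides and invoke submultiplicativity and compatibility of the operator norm, namely $|(\Lambda^{\tau,h})^{-1}\disc{\boldsymbol u}\,\boldsymbol w|\le\norm{(\Lambda^{\tau,h})^{-1}\disc{\boldsymbol u}}_{op}\,|\boldsymbol w|$ for any vector $\boldsymbol w$, and $|(\Lambda^{\tau,h}\disc{\boldsymbol u}-\Lambda\cont{\boldsymbol u})\boldsymbol F\cont{\boldsymbol u}|\le\norm{\Lambda^{\tau,h}\disc{\boldsymbol u}-\Lambda\cont{\boldsymbol u}}_{op}\,|\boldsymbol F\cont{\boldsymbol u}|$. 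Since $\boldsymbol u\in C^{\max(p,q)+\max(r,k)}(\overline{\mathcal I\times\Omega})$ and $\boldsymbol F$ is continuous in its arguments, $\boldsymbol F\cont{\boldsymbol u}$ is continuous on the compact set $\overline{\mathcal I\times\Omega}$, hence $|\boldsymbol F\cont{\boldsymbol u}|\le C'$ uniformly; combining this with the three consistency bounds, the uniform bound $\norm{(\Lambda^{\tau,h})^{-1}\disc{\boldsymbol u}}_{op}\le\gamma$, and the triangle inequality gives $|\boldsymbol F\cont{\boldsymbol u}-\boldsymbol F^{\tau,h}\disc{\boldsymbol u}|\le\gamma(C'C_\Lambda+C_t+C_x)(h^p+\tau^q)$ at the fixed mesh point, and taking the maximum over $\mesh$ yields the claim with $C:=\gamma(C'C_\Lambda+C_t+C_x)$. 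The only genuine difference from the scalar case, and the step warranting the most care, is that matrix multiplication is noncommutative, so the single scalar factorization must be replaced by a chain of operator-norm inequalities in which each factor is matched to the correct hypothesis; because the finite-dimensional operator norm is submultiplicative and compatible with $|\cdot|$, no new difficulty arises beyond this bookkeeping.
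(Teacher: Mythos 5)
Your proposal is correct and follows essentially the same route as the paper's own proof: the same factorization through $(\Lambda^{\tau,h})^{-1}$, the same insertion of the continuous multiplier identity to produce the three consistency error terms, the same uniform bound $|\boldsymbol F\cont{\boldsymbol u}|\le C'$ by compactness, and the same final constant $C=\gamma(C'C_\Lambda+C_t+C_x)$. Your explicit attention to replacing the scalar absolute values by operator-norm inequalities is, if anything, slightly more careful than the paper's write-up, which states the bound with $\left|\Lambda^{\tau,h}\disc{\boldsymbol u}-\Lambda\cont{\boldsymbol u}\right|$ where the operator norm is meant.
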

\begin{proof}
The proof is nearly identical to the scalar case. 
Fix $(t_k,\boldsymbol x_J)\in \mesh$. Since $\Lambda$ is a conservation law $m\times m$ multiplier matrix of $\boldsymbol F$ and by definition of $\boldsymbol F^{\tau, h}$,
\begin{align}
\boldsymbol F\cont{\boldsymbol u}&-\boldsymbol F^{\tau,h}\disc{\boldsymbol u} \nonumber \\
&= \left({\Lambda^{\tau,h}}\right)^{-1}\disc{\boldsymbol u^{\tau,h}}\left( \Lambda^{\tau, h}\disc{\boldsymbol u}\boldsymbol F\cont{\boldsymbol u}-D_t^{\tau, h}\boldsymbol \psi\disc{\boldsymbol u}-D_{\boldsymbol x}^{\tau, h}\cdot \boldsymbol\Phi\disc{\boldsymbol u}\right) \nonumber \\
&=\left({\Lambda^{\tau,h}}\right)^{-1}\disc{\boldsymbol u^{\tau,h}}\left( (\Lambda^{\tau, h}\disc{\boldsymbol u}-\Lambda\cont{\boldsymbol u})\boldsymbol F\cont{\boldsymbol u}+\left(D_t \boldsymbol \psi\cont{\boldsymbol u}-D_t^{\tau, h}\boldsymbol \Psi\disc{\boldsymbol u}\right)+\left(D_{\boldsymbol x}\cdot \boldsymbol\Phi\cont{\boldsymbol u}-D_{\boldsymbol x}^{\tau, h}\cdot \boldsymbol\Phi\disc{\boldsymbol u}\right)\right) \label{t4e1}
\end{align}
Since $\boldsymbol u\in C^{\max(p,q)+\max(r,k)}(\overline{\mathcal{I}\times \Omega})$ and $\boldsymbol F[\boldsymbol u]$ is continuous in its arguments, $\boldsymbol F\contdep{\boldsymbol u}{t,\boldsymbol x}$ is continuous on $\overline{\mathcal{I}\times \Omega}$ and thus $|\boldsymbol F\contdep{\boldsymbol u}{t,\boldsymbol x}|\leq C'$ uniformly on $\overline{\mathcal{I}\times \Omega}$. By \eqref{t4e1},
\begin{align}
&\left| \boldsymbol F\cont{\boldsymbol u}-\boldsymbol F^{\tau,h}\disc{\boldsymbol u}\right| \nonumber\\
&\quad\quad\leq \norm{\left({\Lambda^{\tau,h}}\right)^{-1}\disc{\boldsymbol u^{\tau,h}}}_{op}\left( \left|\Lambda^{\tau, h}\disc{\boldsymbol u}-\Lambda\cont{\boldsymbol u}\right| |\boldsymbol F\cont{\boldsymbol u}|+\left|D_t \boldsymbol \psi\cont{\boldsymbol u}-D_t^{\tau, h}\boldsymbol \psi\disc{\boldsymbol u}\right|+\left|D_{\boldsymbol x}\cdot \boldsymbol\Phi\cont{\boldsymbol u}-D_{\boldsymbol x}^{\tau, h}\cdot \boldsymbol\Phi\disc{\boldsymbol u}\right|\right) \nonumber \\
&\quad\quad\leq \gamma(C'C_{\Lambda}(h^p+\tau^q))+C_{t}(h^p+\tau^q)+C_x(h^p+\tau^q)) = C(h^p+\tau^q), \label{t4e2}
\end{align} where $\displaystyle C:=\gamma\left(C'C_{\Lambda}+C_{t}+C_{x}\right)$. Since this is true for arbitrary mesh points $(t_k,\boldsymbol x_J)\in \mesh$, taking the maximum of \eqref{t4e2} over all points in $\mesh$ gives the desired result.
\end{proof}
As in the scalar case, we will return in Section \ref{sec:singularMult} to address the issue of when $\Lambda^{\tau, h}\disc{\boldsymbol u^{\tau,h}}$ become singular. The analog of conservative discretizations of $\boldsymbol F^{\tau, h}$ also holds for system of PDEs.
\begin{thm}
Let $\Lambda$ in \eqref{bigLambda} be an $m\times m$ matrix of conservation law multipliers  of \eqref{systemPDE} with density $\boldsymbol \psi$ and fluxes $\Phi$, as defined \eqref{bigPhi}. Let $\Lambda^{\tau, h}$ be a finite difference discretizations of $\Lambda$. Let $\boldsymbol u^{\tau,h}:\mesh\rightarrow \mathbb{R}^m$ be a discrete function. Define $D_t^{\tau, h}\boldsymbol \psi$ and $D_{\boldsymbol x}^{\tau, h}\cdot \Phi$ to be the following discretizations,
\begin{subequations}
\begin{align}
D_t^{\tau, h}\boldsymbol \psi\discdep{\boldsymbol u^{\tau,h}}{k,J} &= \frac{\boldsymbol\psi^{\tau,h}\discdep{\boldsymbol u^{\tau,h}}{k,J}-\boldsymbol\psi^{\tau,h}\discdep{\boldsymbol u^{\tau,h}}{k-1,J}}{\tau}, \label{dtSystem} \\
\left(D_{\boldsymbol x}^{\tau, h}\cdot \Phi \right)^j\discdep{\boldsymbol u^{\tau,h}}{k,J} &= \sum_{i=1}^n\frac{\phi^{j i,\tau,h}\discdep{\boldsymbol u^{\tau,h}}{k,J}-\phi^{j i,\tau,h}\discdep{\boldsymbol u^{\tau,h}}{k,J-\hat{i}}}{h},
\label{dxSystem}\end{align}
\end{subequations}where $\boldsymbol\psi^{\tau, h}$ and $\phi^{j i,\tau, h}$ are finite difference discretizations of $\boldsymbol\psi$ and $\phi^{j i}$. Also, let $\boldsymbol F^{\tau, h}$ be the corresponding discretization of \eqref{systemPDE} given by \eqref{discSystemF}. If $\boldsymbol u^{\tau,h}$ is a solution to the discrete problem \begin{equation}\boldsymbol F^{\tau, h}\discdep{\boldsymbol u^{\tau,h}}{k,J}=0,\hspace{3mm} {\boldsymbol x}_J\in \Omega^h, \label{discSystem}\end{equation} then the discrete divergence theorem holds,
\begin{align}
\boldsymbol 0=\frac{1}{\tau}\sum_{{\boldsymbol x}_J \in \Omega^h} \left(\boldsymbol\psi^{\tau,h}\discdep{\boldsymbol u^{\tau,h}}{k,J}-\boldsymbol\psi^{\tau,h}\discdep{\boldsymbol u^{\tau,h}}{k-1,J}\right)+\frac{1}{h} \sum_{{\boldsymbol x}_J \in \partial\Omega^h} \Phi^{\tau,h}\discdep{\boldsymbol u^{\tau,h}}{k,J} \cdot \boldsymbol \nu^J, \label{discDivThmSystem}
\end{align}where $\boldsymbol \nu^J=(\nu_1^J,\dots,\nu_n^J)$ is the outward-pointing unit normal of $\partial\Omega^h$ at $\boldsymbol x_J$. \label{vectorCL}
\end{thm}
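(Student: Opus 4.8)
The plan is to mirror the proof of Theorem \ref{scalarCL} almost verbatim, promoting each scalar identity to its vector/matrix analogue and then carrying out the telescoping sum one component at a time. The only genuinely new feature is that the scalar prefactor $\lambda^{\tau,h}$ is replaced by the matrix $\Lambda^{\tau,h}$ in the opening step, which I handle by \emph{left}-multiplying the discrete equation by the matrix. First I would invoke the hypothesis that $\boldsymbol u^{\tau,h}$ solves $\boldsymbol F^{\tau,h}\discdep{\boldsymbol u^{\tau,h}}{k,J}=\boldsymbol 0$ on $\Omega^h$. Left-multiplying the defining relation \eqref{discSystemF} by $\Lambda^{\tau,h}$ and using $\Lambda^{\tau,h}(\Lambda^{\tau,h})^{-1}=I$ (which is meaningful precisely because invertibility of $\Lambda^{\tau,h}$ is what makes \eqref{discSystemF} well-defined) yields the pointwise identity $\Lambda^{\tau,h}\boldsymbol F^{\tau,h}=D_t^{\tau,h}\boldsymbol\psi+D_{\boldsymbol x}^{\tau,h}\cdot\Phi$. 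Since the left-hand side vanishes at every interior mesh point, summing over $\boldsymbol x_J\in\Omega^h$ gives
$$\boldsymbol 0=\sum_{\boldsymbol x_J\in\Omega^h}\left(D_t^{\tau,h}\boldsymbol\psi\discdep{\boldsymbol u^{\tau,h}}{k,J}+D_{\boldsymbol x}^{\tau,h}\cdot\Phi\discdep{\boldsymbol u^{\tau,h}}{k,J}\right),$$
which is the vector analogue of \eqref{t2e1}, now an equality in $\mathbb{R}^m$.

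Next I would substitute the explicit difference forms \eqref{dtSystem}--\eqref{dxSystem}. The temporal term is already a time-difference of summed densities and contributes $\frac{1}{\tau}\sum_{\boldsymbol x_J\in\Omega^h}\bigl(\boldsymbol\psi^{\tau,h}\discdep{\boldsymbol u^{\tau,h}}{k,J}-\boldsymbol\psi^{\tau,h}\discdep{\boldsymbol u^{\tau,h}}{k-1,J}\bigr)$ with no further manipulation. The spatial term splits into components: its $j$-th entry is $\frac{1}{h}\sum_{i=1}^n\sum_{\boldsymbol x_J\in\Omega^h}\bigl(\phi^{ji,\tau,h}_{k,J}-\phi^{ji,\tau,h}_{k,J-\hat{i}}\bigr)$. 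For each fixed component index $j$ and spatial direction $i$, the discrete flux $\phi^{ji,\tau,h}$ plays exactly the role of the scalar flux $\phi^{i,\tau,h}$ in Theorem \ref{scalarCL}, so the identical one-dimensional telescoping over the consecutive index blocks $\{\alpha_{J_i,l},\dots,\beta_{J_i,l}\}$ collapses the interior contributions and leaves only the endpoint values at $j_i=\alpha_{J_i,l}$ or $\beta_{J_i,l}$, each carrying the sign $\nu_i^J$.

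Finally I would reassemble the boundary contributions. Summing the telescoped expressions over $i$ and recognizing, at each boundary point $\boldsymbol x_J\in\partial\Omega^h$, that $\sum_{i=1}^n\phi^{ji,\tau,h}_{k,J}\,\nu_i^J=(\Phi^{\tau,h}\cdot\boldsymbol\nu^J)^j$ identifies the spatial sum with $\frac{1}{h}\sum_{\boldsymbol x_J\in\partial\Omega^h}\Phi^{\tau,h}\discdep{\boldsymbol u^{\tau,h}}{k,J}\cdot\boldsymbol\nu^J$, invoking the boundary definition \eqref{boundary} precisely as in the scalar proof. Stacking the resulting $m$ component identities produces \eqref{discDivThmSystem}. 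The main bookkeeping obstacle is this last reassembly: one must carefully track the flux-tensor index pair $(j,i)$ so that the per-component, per-direction telescoping sums recombine into the tensor--normal contraction $\Phi^{\tau,h}\cdot\boldsymbol\nu^J$. Beyond this indexing care, the argument is structurally identical to the scalar case, since the matrix $\Lambda^{\tau,h}$ enters only in the first step and the vanishing of $\boldsymbol F^{\tau,h}$ decouples the subsequent telescoping across the $m$ components.
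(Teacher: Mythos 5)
Your proposal is correct and follows essentially the same route as the paper: left-multiply the discrete system by $\Lambda^{\tau,h}$ to recover the pointwise identity $\boldsymbol 0 = D_t^{\tau,h}\boldsymbol\psi + D_{\boldsymbol x}^{\tau,h}\cdot\Phi$, then apply the scalar telescoping argument componentwise. The paper's proof is exactly this, stated more tersely ("apply the same argument as in the scalar case to each component"), and your additional bookkeeping of the $(j,i)$ flux-tensor indices is a faithful expansion of that step rather than a departure from it.
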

\begin{proof} If $\boldsymbol u^{\tau,h}$ satisfies \eqref{discSystem}, then on each ${\boldsymbol x}_J\in \Omega^h$
\begin{equation}
\boldsymbol 0 = \Lambda^{\tau, h}\disc{\boldsymbol u^{\tau,h}}\boldsymbol F^{\tau, h}\disc{\boldsymbol u^{\tau,h}} = D_t^{\tau, h} {\boldsymbol \psi}\disc{\boldsymbol u^{\tau,h}} + D_{\boldsymbol x}^{\tau, h}\cdot \Phi\disc{\boldsymbol u^{\tau,h}}. \label{t5e1}
\end{equation} Now apply the same argument as in the scalar case to each component of \eqref{t5e1}.
\end{proof} {Similar to the scalar case, we mention that Theorem \ref{vectorCL} need not imply the discrete density and fluxes are restricted to be first order accurate. In the examples, we illustrate that it is possible to have higher order accuracy for the density and fluxes as well.
}
\begin{cor} Let $\boldsymbol\psi^{\tau,h}, \Phi^{\tau,h}$ be as given in Theorem \ref{vectorCL} and $\boldsymbol u^{\tau,h}$ be a solution to \eqref{discSystem}. If the discretized fluxes $\Phi^{\tau,h}$ vanish with $\boldsymbol u^{\tau,h}$ on the boundary $\partial\Omega^h$, then
\begin{align*}
\sum_{{\boldsymbol x}_J \in \Omega^h}\boldsymbol\psi^{\tau,h}\discdep{\boldsymbol u^{\tau,h}}{k,J}=\sum_{{\boldsymbol x}_J \in \Omega^h}\boldsymbol\psi^{\tau,h}\discdep{\boldsymbol u^{\tau,h}}{k-1,J}.
\end{align*}
\end{cor}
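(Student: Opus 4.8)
The plan is to apply Theorem \ref{vectorCL} directly: since $\boldsymbol u^{\tau,h}$ is by hypothesis a solution to the discrete problem \eqref{discSystem}, the discrete divergence theorem \eqref{discDivThmSystem} holds at the time level $t_k$. The entire content of the corollary is then to observe what happens to the two terms in \eqref{discDivThmSystem} under the additional hypothesis that the discrete fluxes vanish on the boundary.

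First I would write down \eqref{discDivThmSystem}, namely
\begin{align*}
\boldsymbol 0=\frac{1}{\tau}\sum_{{\boldsymbol x}_J \in \Omega^h} \left(\boldsymbol\psi^{\tau,h}\discdep{\boldsymbol u^{\tau,h}}{k,J}-\boldsymbol\psi^{\tau,h}\discdep{\boldsymbol u^{\tau,h}}{k-1,J}\right)+\frac{1}{h} \sum_{{\boldsymbol x}_J \in \partial\Omega^h} \Phi^{\tau,h}\discdep{\boldsymbol u^{\tau,h}}{k,J} \cdot \boldsymbol \nu^J.
\end{align*}
By assumption the discretized fluxes $\Phi^{\tau,h}\disc{\boldsymbol u^{\tau,h}}$ vanish on $\partial\Omega^h$, so each summand $\Phi^{\tau,h}\discdep{\boldsymbol u^{\tau,h}}{k,J}\cdot\boldsymbol\nu^J$ in the boundary sum is the zero vector, and hence the entire second term vanishes. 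This leaves
\begin{align*}
\boldsymbol 0=\frac{1}{\tau}\sum_{{\boldsymbol x}_J \in \Omega^h} \left(\boldsymbol\psi^{\tau,h}\discdep{\boldsymbol u^{\tau,h}}{k,J}-\boldsymbol\psi^{\tau,h}\discdep{\boldsymbol u^{\tau,h}}{k-1,J}\right).
\end{align*}
Multiplying through by $\tau$ and splitting the sum into the two time levels then yields the claimed identity after rearrangement.

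There is no genuine obstacle here, as the corollary is an immediate consequence of Theorem \ref{vectorCL}; the only point requiring a word of care is interpreting the hypothesis ``$\Phi^{\tau,h}$ vanish with $\boldsymbol u^{\tau,h}$ on the boundary'' as the statement that every component of $\Phi^{\tau,h}\disc{\boldsymbol u^{\tau,h}}$ is zero at each $\boldsymbol x_J\in\partial\Omega^h$, which is exactly what is needed to annihilate the boundary term regardless of the geometry of $\partial\Omega^h$ encoded in $\boldsymbol\nu^J$. I would present the argument as a short three-line computation rather than a formal multi-step proof.
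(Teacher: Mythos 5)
Your proposal is correct and is exactly the argument the paper intends: the corollary is an immediate consequence of the discrete divergence theorem \eqref{discDivThmSystem} from Theorem \ref{vectorCL}, with the boundary term annihilated by the vanishing-flux hypothesis and the remaining telescoped time-difference sum rearranged after multiplying by $\tau$. The paper omits the proof precisely because it is this short computation, so there is nothing to add.
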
In other words, the proposed scheme for systems also has exact discrete conservation.
}
\subsection{System of $m$ PDEs with $s$ conservation laws, $0<s<m$}
\label{sec:rectSystem}
\rev{
So far, we have assumed the number of equations $m$ is equal to the number of conservation laws $s$. We now extend our result to the case when $s<m$. We proceed as before, consider a $k$-th order system of $m$ PDEs \eqref{systemPDE} but now with $s$ conservation laws $\Lambda$,
\begin{equation}\Lambda\cont{\boldsymbol u}=\begin{pmatrix}
\widetilde{
\Lambda}\cont{\boldsymbol u} & \Sigma\cont{\boldsymbol u}
  \end{pmatrix},  \label{rectLambda}\end{equation}where $\widetilde{
\Lambda}$ is the $s\times s$ submatrix of $\Lambda$ and $
\Sigma$ is the $s\times (m-s)$ submatrix of $\Lambda$,

\begin{align}\widetilde{\Lambda}\cont{\boldsymbol u}=\begin{pmatrix}
    \lambda^{11}\cont{\boldsymbol u} & \dots & \lambda^{1s}\cont{\boldsymbol u}\\
    \vdots &  & \vdots\\
    \lambda^{s1}\cont{\boldsymbol u} & \dots & \lambda^{ss}\cont{\boldsymbol u}\\
  \end{pmatrix}, & \text{     } \Sigma\cont{\boldsymbol u} = \begin{pmatrix}
    \lambda^{1\hspace{1mm}s+1}\cont{\boldsymbol u} & \dots & \lambda^{1m}\cont{\boldsymbol u}\\
    \vdots &  & \vdots\\
    \lambda^{s\hspace{1mm}s+1}\cont{\boldsymbol u} & \dots & \lambda^{sm}\cont{\boldsymbol u}\\
  \end{pmatrix}.\nonumber \end{align}
By partitioning $\boldsymbol F$ in a similar manner,
\begin{equation*}
\boldsymbol F\cont{\boldsymbol u}=\begin{pmatrix}
\widetilde{\boldsymbol F}\cont{\boldsymbol u} \\ {\boldsymbol G\cont{\boldsymbol u}}
  \end{pmatrix},
\end{equation*} where $\widetilde{\boldsymbol F}$ are the first $s$ entries of $\boldsymbol F$ and $\boldsymbol G$ are the last $m-s$ entries of $\boldsymbol F$, one can write
\begin{equation}
D_t\boldsymbol \psi\cont{\boldsymbol u} + D_{\boldsymbol x}\cdot \Phi\cont{\boldsymbol u} =\Lambda\cont{\boldsymbol u} \boldsymbol F\cont{\boldsymbol u} = \begin{pmatrix}
\widetilde{
\Lambda}\cont{\boldsymbol u} & \Sigma\cont{\boldsymbol u}
  \end{pmatrix}\begin{pmatrix}
\widetilde{\boldsymbol F}\cont{\boldsymbol u} \\ {\boldsymbol G}\cont{\boldsymbol u}
  \end{pmatrix} = \widetilde{
\Lambda}\cont{\boldsymbol u}\widetilde{\boldsymbol F}\cont{\boldsymbol u} + \Sigma\cont{\boldsymbol u}{\boldsymbol G}\cont{\boldsymbol u}. \label{partitionExp}
\end{equation} 
Now if the row vectors of $\Lambda\contdep{\boldsymbol u}{t,\boldsymbol x}$ are locally linearly independent and hence (upon reordering of the equations of $\boldsymbol F\contdep{\boldsymbol u}{t,\boldsymbol x}$ if necessary) $\widetilde{\Lambda}^{-1}\contdep{\boldsymbol u}{t,\boldsymbol x}$ exists, the main idea is to rewrite $\widetilde{\boldsymbol F}\contdep{\boldsymbol u}{t,\boldsymbol x}$ as,
\begin{equation}
\widetilde{\boldsymbol F}\contdep{\boldsymbol u}{t,\boldsymbol x} = \widetilde{\Lambda}^{-1}\contdep{\boldsymbol u}{t,\boldsymbol x}\left(D_t\boldsymbol \psi\contdep{\boldsymbol u}{t,\boldsymbol x} + D_{\boldsymbol x}\cdot \Phi\contdep{\boldsymbol u}{t,\boldsymbol x} - \Sigma{\boldsymbol G}\contdep{\boldsymbol u}{t,\boldsymbol x}\right), \label{tildeF}
\end{equation} and discretizing both \eqref{tildeF} and $\boldsymbol G$. This leads to the following generalization of Theorem \ref{systemThm}.

\begin{thm} Let $\boldsymbol u \in C^{\max(p,q)+\max(r,k)}(\overline{\mathcal{I}\times \Omega})$ and $\Lambda$ be as defined in \eqref{rectLambda}. Suppose $\widetilde{\Lambda}^{\tau, h}$, $D_t^{\tau, h}\boldsymbol \psi$, $D_{\boldsymbol x}^{\tau, h}\cdot \Phi$, $\Sigma^{\tau,h}$ and ${\boldsymbol G}^{\tau,h}$ are finite difference discretizations of $\widetilde{\Lambda}$, $D_t\boldsymbol \psi$, $D_{\boldsymbol x}\cdot \Phi$,  $\Sigma$ and $\boldsymbol G$ with accuracy up to order $p$ in space and $q$ in time:
\begin{align*} 
\norm{ \norm{\widetilde{\Lambda}\cont{\boldsymbol u}-\widetilde{\Lambda}^{\tau, h}\disc{\boldsymbol u}}_{op} }_{l^\infty(\mesh)} & \leq C_{\widetilde{\Lambda}}(h^p+\tau^q), \\
\norm{D_t\boldsymbol\psi\cont{\boldsymbol u}-D_t^{\tau, h}\boldsymbol\psi\disc{\boldsymbol u}}_{l^\infty(\mesh)} & \leq C_t(h^p+\tau^q), \\
\norm{D_{\boldsymbol x}\cdot \Phi\cont{\boldsymbol u}-D_{\boldsymbol x}^{\tau, h}\cdot \Phi\disc{\boldsymbol u}}_{l^\infty(\mesh)} & \leq C_x(h^p+\tau^q),\\
\norm{ \norm{\Sigma \cont{\boldsymbol u}-\Sigma^{\tau, h}\disc{\boldsymbol u}}_{op} }_{l^\infty(\mesh)} & \leq C_\Sigma(h^p+\tau^q), \\
\norm{\boldsymbol G \cont{\boldsymbol u}-{\boldsymbol G}^{\tau,h}\disc{\boldsymbol u}}_{l^\infty(\mesh)} & \leq C_G(h^p+\tau^q).
\end{align*} 
Let $\boldsymbol u^{\tau,h}:\mesh\rightarrow \mathbb{R}^m$ be a discrete function. Suppose $\widetilde{\Lambda}^{\tau,h}\disc{\boldsymbol u^{\tau,h}}$ is invertible on $\mesh$ and that $\norm{\norm{\left({\widetilde{\Lambda}^{\tau,h}}\right)^{-1}\disc{\boldsymbol u^{\tau,h}}}_{op}}_{l^\infty(\mesh)}\leq \gamma<\infty$, with $\gamma$ independent of $\tau, h$.
Define the finite difference discretization of $\boldsymbol F$ as:
\begin{equation} {\boldsymbol F}^{\tau, h}\disc{\boldsymbol u^{\tau,h}}:=\begin{pmatrix}
\widetilde{\boldsymbol F}^{\tau, h}\disc{\boldsymbol u^{\tau,h}} \\ {\boldsymbol G^{\tau, h}}\disc{\boldsymbol u^{\tau,h}}
  \end{pmatrix}=
    \begin{pmatrix}\left({\widetilde{\Lambda}^{\tau,h}}\right)^{-1}\disc{\boldsymbol u^{\tau,h}}\left(D_t^{\tau, h} {\boldsymbol \psi}\disc{\boldsymbol u^{\tau,h}} + D_{\boldsymbol x}^{\tau, h}\cdot \Phi\disc{\boldsymbol u^{\tau,h}} - \Sigma^{\tau, h}\disc{\boldsymbol u^{\tau,h}}{\boldsymbol G}^{\tau, h}\disc{\boldsymbol u^{\tau,h}}\right)\\ {\boldsymbol G}^{\tau, h}\disc{\boldsymbol u^{\tau,h}}\end{pmatrix}. \label{discTildeF}\end{equation} 
Then there exists a constant $\widetilde{C}>0$ independent of $h,\tau$ such that for sufficiently small $h,\tau$,
\begin{equation*} \norm{\widetilde{\boldsymbol F}\cont{\boldsymbol u^{\tau,h}}-\widetilde{\boldsymbol F}^{\tau, h}\disc{\boldsymbol u^{\tau,h}}}_{l^\infty(\mesh)} \leq \widetilde{C}(h^p+\tau^q).\end{equation*}And hence of course, there exists a constant $C>0$ independent of $h,\tau$ such that for sufficiently small $h,\tau$,
 \begin{equation*} \norm{\boldsymbol F\cont{\boldsymbol u^{\tau,h}}-\boldsymbol F^{\tau, h}\disc{\boldsymbol u^{\tau,h}}}_{l^\infty(\mesh)} \leq C(h^p+\tau^q).\end{equation*}\label{augSystemThm}
\end{thm}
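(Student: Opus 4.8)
The plan is to prove consistency of the upper block $\widetilde{\boldsymbol F}^{\tau,h}$ by the same factorization used for Theorem~\ref{systemThm}, and then to read off the bound for the full operator $\boldsymbol F^{\tau,h}$ almost immediately, since by \eqref{discTildeF} its lower block is exactly $\boldsymbol G^{\tau,h}$, whose consistency is a hypothesis. Fix $(t_k,\boldsymbol x_J)\in\mesh$ and abbreviate the continuous and discrete right-hand sides of \eqref{tildeF} and \eqref{discTildeF} by $v$ and $v^{\tau,h}$, so that $\widetilde{\Lambda}\cont{\boldsymbol u}\,\widetilde{\boldsymbol F}\cont{\boldsymbol u}=v$ by \eqref{partitionExp} and $\widetilde{\Lambda}^{\tau,h}\disc{\boldsymbol u}\,\widetilde{\boldsymbol F}^{\tau,h}\disc{\boldsymbol u}=v^{\tau,h}$ by construction. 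Left-multiplying the error by $\widetilde{\Lambda}^{\tau,h}$ and inserting $\pm\widetilde{\Lambda}\cont{\boldsymbol u}\widetilde{\boldsymbol F}\cont{\boldsymbol u}$ as in the square case gives
\begin{equation*}
\widetilde{\boldsymbol F}\cont{\boldsymbol u}-\widetilde{\boldsymbol F}^{\tau,h}\disc{\boldsymbol u}=\left({\widetilde{\Lambda}^{\tau,h}}\right)^{-1}\disc{\boldsymbol u}\left[\left(\widetilde{\Lambda}^{\tau,h}\disc{\boldsymbol u}-\widetilde{\Lambda}\cont{\boldsymbol u}\right)\widetilde{\boldsymbol F}\cont{\boldsymbol u}+\left(v-v^{\tau,h}\right)\right].
\end{equation*}
The factor $\norm{({\widetilde{\Lambda}^{\tau,h}})^{-1}\disc{\boldsymbol u}}_{op}\leq\gamma$ by hypothesis, the difference $\widetilde{\Lambda}^{\tau,h}\disc{\boldsymbol u}-\widetilde{\Lambda}\cont{\boldsymbol u}$ is $O(h^p+\tau^q)$ by the $C_{\widetilde{\Lambda}}$-bound, and $|\widetilde{\boldsymbol F}\cont{\boldsymbol u}|\leq C'$ since $\widetilde{\boldsymbol F}\cont{\boldsymbol u}$ is continuous on the compact set $\overline{\mathcal{I}\times\Omega}$; so everything reduces to estimating $v-v^{\tau,h}$.

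The new ingredient, and the step I expect to be the main obstacle, lies in $v-v^{\tau,h}$: the $D_t\boldsymbol\psi$ and $D_{\boldsymbol x}\cdot\Phi$ differences are $O(h^p+\tau^q)$ directly from the $C_t$- and $C_x$-hypotheses, but the term $\Sigma\cont{\boldsymbol u}\boldsymbol G\cont{\boldsymbol u}-\Sigma^{\tau,h}\disc{\boldsymbol u}\boldsymbol G^{\tau,h}\disc{\boldsymbol u}$ is a difference of products that has no counterpart in Theorem~\ref{systemThm}. I would split it as
\begin{equation*}
\Sigma\cont{\boldsymbol u}\boldsymbol G\cont{\boldsymbol u}-\Sigma^{\tau,h}\disc{\boldsymbol u}\boldsymbol G^{\tau,h}\disc{\boldsymbol u}=\left(\Sigma\cont{\boldsymbol u}-\Sigma^{\tau,h}\disc{\boldsymbol u}\right)\boldsymbol G\cont{\boldsymbol u}+\Sigma^{\tau,h}\disc{\boldsymbol u}\left(\boldsymbol G\cont{\boldsymbol u}-\boldsymbol G^{\tau,h}\disc{\boldsymbol u}\right).
\end{equation*}
The first summand is controlled by $C_\Sigma(h^p+\tau^q)$ times the uniform bound $|\boldsymbol G\cont{\boldsymbol u}|\leq C''$ (continuity of $\boldsymbol G$ on $\overline{\mathcal{I}\times\Omega}$). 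The second needs a uniform operator-norm bound on $\Sigma^{\tau,h}\disc{\boldsymbol u}$, which is \emph{not} assumed a priori; here I would invoke the consistency of $\Sigma$: writing $C_{\Sigma,0}:=\norm{\norm{\Sigma\cont{\boldsymbol u}}_{op}}_{l^\infty(\mesh)}<\infty$, the triangle inequality yields $\norm{\Sigma^{\tau,h}\disc{\boldsymbol u}}_{op}\leq C_{\Sigma,0}+C_\Sigma(h^p+\tau^q)\leq C_{\Sigma,0}+1$ once $h,\tau$ are small enough. This is exactly why the conclusion is asserted only for sufficiently small $h,\tau$, and it is the point that genuinely distinguishes this case from the square one.

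Assembling these estimates, each bracketed contribution is $O(h^p+\tau^q)$, so pointwise $|\widetilde{\boldsymbol F}\cont{\boldsymbol u}-\widetilde{\boldsymbol F}^{\tau,h}\disc{\boldsymbol u}|\leq\widetilde{C}(h^p+\tau^q)$ with $\widetilde{C}=\gamma\left(C'C_{\widetilde{\Lambda}}+C_t+C_x+C''C_\Sigma+(C_{\Sigma,0}+1)C_G\right)$, and taking the maximum over $\mesh$ gives the first claimed bound. For the full system, the lower block of $\boldsymbol F\cont{\boldsymbol u}-\boldsymbol F^{\tau,h}\disc{\boldsymbol u}$ equals $\boldsymbol G\cont{\boldsymbol u}-\boldsymbol G^{\tau,h}\disc{\boldsymbol u}$, already $O(h^p+\tau^q)$ by the $C_G$-hypothesis, while the upper block is the quantity just estimated; by equivalence of the vector norm with the blockwise maximum we obtain $\norm{\boldsymbol F\cont{\boldsymbol u}-\boldsymbol F^{\tau,h}\disc{\boldsymbol u}}_{l^\infty(\mesh)}\leq C(h^p+\tau^q)$ with, say, $C=\widetilde{C}+C_G$. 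All that remains is the routine bookkeeping of constants, which parallels the proof of Theorem~\ref{systemThm} line for line except for the product term handled above.
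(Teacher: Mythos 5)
Your proof is correct and follows essentially the same route as the paper's: the same factorization through $\left({\widetilde{\Lambda}^{\tau,h}}\right)^{-1}$, the same three direct consistency terms, and the same add-and-subtract treatment of the product term $\Sigma\cont{\boldsymbol u}\boldsymbol G\cont{\boldsymbol u}-\Sigma^{\tau,h}\disc{\boldsymbol u}\boldsymbol G^{\tau,h}\disc{\boldsymbol u}$. The only cosmetic difference is the choice of cross term: the paper pairs $\left(\Sigma^{\tau,h}-\Sigma\right)$ with $\boldsymbol G^{\tau,h}$ and bounds $|\boldsymbol G^{\tau,h}|\leq 2C_3$ via consistency for small $h,\tau$, whereas you pair it the other way and instead bound $\norm{\Sigma^{\tau,h}}_{op}$ by $C_{\Sigma,0}+1$ -- both devices produce the identical ``sufficiently small $h,\tau$'' caveat for the identical reason, so the arguments are interchangeable.
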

\begin{proof} The proof is nearly the same as in the square multiplier matrix case. \\
Fix $(t_k,\boldsymbol x_J)\in \mesh$. Since by \eqref{discTildeF} and \eqref{partitionExp},
\begin{align}
\widetilde{\boldsymbol F}\cont{\boldsymbol u}&-\widetilde{\boldsymbol F}^{\tau,h}\disc{\boldsymbol u} \nonumber \\
&= \left({\widetilde{\Lambda}^{\tau,h}}\right)^{-1}\disc{\boldsymbol u^{\tau,h}}\left( \widetilde{\Lambda}^{\tau, h}\disc{\boldsymbol u}\widetilde{\boldsymbol F}\cont{\boldsymbol u}-D_t^{\tau, h}\boldsymbol \psi\disc{\boldsymbol u}-D_{\boldsymbol x}^{\tau, h}\cdot \boldsymbol\Phi\disc{\boldsymbol u}+ \Sigma^{\tau, h}\disc{\boldsymbol u^{\tau,h}}{\boldsymbol G}^{\tau, h}\disc{\boldsymbol u^{\tau,h}}\right) \nonumber \\
&=\left({\widetilde{\Lambda}^{\tau,h}}\right)^{-1}\disc{\boldsymbol u^{\tau,h}}\left( \left(\widetilde{\Lambda}^{\tau, h}\disc{\boldsymbol u}-\widetilde{\Lambda}\cont{\boldsymbol u}\right)\widetilde{\boldsymbol F}\cont{\boldsymbol u}+\left(D_t\boldsymbol \psi\cont{\boldsymbol u}-D_t^{\tau, h}\boldsymbol \psi\disc{\boldsymbol u}\right)+ \left(D_{\boldsymbol x}\cdot \Phi\cont{\boldsymbol u} -D_{\boldsymbol x}^{\tau, h}\cdot \boldsymbol\Phi\disc{\boldsymbol u}\right)\right. \nonumber\\&\left. \hspace{25mm}+\Sigma^{\tau, h}\disc{\boldsymbol u^{\tau,h}}{\boldsymbol G}^{\tau, h}\disc{\boldsymbol u^{\tau,h}}-\Sigma\cont{\boldsymbol u}{\boldsymbol G}\cont{\boldsymbol u}\right)\nonumber\\
&=\left({\widetilde{\Lambda}^{\tau,h}}\right)^{-1}\disc{\boldsymbol u^{\tau,h}}\left( \left(\widetilde{\Lambda}^{\tau, h}\disc{\boldsymbol u}-\widetilde{\Lambda}\cont{\boldsymbol u}\right)\widetilde{\boldsymbol F}\cont{\boldsymbol u}+\left(D_t\boldsymbol \psi\cont{\boldsymbol u}-D_t^{\tau, h}\boldsymbol \psi\disc{\boldsymbol u}\right)+ \left(D_{\boldsymbol x}\cdot \Phi\cont{\boldsymbol u} -D_{\boldsymbol x}^{\tau, h}\cdot \boldsymbol\Phi\disc{\boldsymbol u}\right)\right. \nonumber\\&\left. \hspace{25mm}+\left(\Sigma^{\tau, h}\disc{\boldsymbol u^{\tau,h}}-\Sigma\cont{\boldsymbol u}\right){\boldsymbol G}^{\tau, h}\disc{\boldsymbol u^{\tau,h}}+\Sigma\cont{\boldsymbol u}\left({\boldsymbol G}^{\tau, h}\disc{\boldsymbol u^{\tau,h}}-{\boldsymbol G}\cont{\boldsymbol u}\right)\right).
 \label{t6e1}
\end{align}
Since $\boldsymbol u\in C^{\max(p,q)+\max(r,k)}(\overline{\mathcal{I}\times \Omega})$ and $\widetilde{\boldsymbol F}\cont{\boldsymbol u}, \Sigma\cont{\boldsymbol u}, {\boldsymbol G}\cont{\boldsymbol u}$ are continuous in their arguments, $\widetilde{\boldsymbol F}\contdep{\boldsymbol u}{t,\boldsymbol x}, \Sigma\contdep{\boldsymbol u}{t,\boldsymbol x}, {\boldsymbol G}\contdep{\boldsymbol u}{t,\boldsymbol x}$ are continuous on $\overline{\mathcal{I}\times \Omega}$. Thus $|\widetilde{\boldsymbol F}\contdep{\boldsymbol u}{t,\boldsymbol x}|\leq C_1$, $|\Sigma\contdep{\boldsymbol u}{t,\boldsymbol x}|\leq C_2$ and $|{\boldsymbol G}\contdep{\boldsymbol u}{t,\boldsymbol x}|\leq C_3$ uniformly on $\overline{\mathcal{I}\times \Omega}$. Note also for sufficiently small $h,\tau$,
\begin{equation}
|{\boldsymbol G}^{\tau, h}\disc{\boldsymbol u^{\tau,h}}|\leq |{\boldsymbol G}^{\tau, h}\disc{\boldsymbol u^{\tau,h}}-{\boldsymbol G}\cont{\boldsymbol u}|+|{\boldsymbol G}\cont{\boldsymbol u}| \leq C_G(h^p+\tau^q)+C_3 \leq 2C_3. \label{t6e2}
\end{equation}
So combining \eqref{t6e1} with \eqref{t6e2},
\begin{align}
&\left| \boldsymbol F\cont{\boldsymbol u}-\boldsymbol F^{\tau,h}\disc{\boldsymbol u}\right| \nonumber\\
&\quad\quad\leq \norm{\left({\widetilde{\Lambda}^{\tau,h}}\right)^{-1}\disc{\boldsymbol u^{\tau,h}}}_{op}\left( \left|\widetilde{\Lambda}^{\tau, h}\disc{\boldsymbol u}-\widetilde{\Lambda}\cont{\boldsymbol u}\right|\left|\widetilde{\boldsymbol F}\cont{\boldsymbol u}\right|+\left|D_t\boldsymbol \psi\cont{\boldsymbol u}-D_t^{\tau, h}\boldsymbol \psi\disc{\boldsymbol u}\right|+ \left|D_{\boldsymbol x}\cdot \Phi\cont{\boldsymbol u} -D_{\boldsymbol x}^{\tau, h}\cdot \boldsymbol\Phi\disc{\boldsymbol u}\right|\right. \nonumber\\&\left. \hspace{40mm}+\left|\Sigma^{\tau, h}\disc{\boldsymbol u^{\tau,h}}-\Sigma\cont{\boldsymbol u}\right|\left|{\boldsymbol G}^{\tau, h}\disc{\boldsymbol u^{\tau,h}}\right|+\left|\Sigma\cont{\boldsymbol u}\right|\left|{\boldsymbol G}^{\tau, h}\disc{\boldsymbol u^{\tau,h}}-{\boldsymbol G}\cont{\boldsymbol u}\right|\right)
 \nonumber \\
&\quad\quad\leq \gamma\left(\frac{}{}C_1C_{\Lambda}(h^p+\tau^q)+C_{t}(h^p+\tau^q)+C_x(h^p+\tau^q)+2C_3C_\Sigma(h^p+\tau^q)+C_2C_G(h^p+\tau^q)\right) \nonumber\\
&\quad\quad= \widetilde{C}(h^p+\tau^q), \label{t6e2}
\end{align} where $\displaystyle \widetilde{C}:=\gamma\left(C_1C_{\Lambda}+C_{t}+C_{x}+2C_3C_\Sigma+C_2C_G\right)$. Since this is true for arbitrary mesh points $(t_k,\boldsymbol x_J)\in \mesh$, taking the maximum of \eqref{t6e2} over all points in $\mesh$ gives the desired result.
\end{proof}
By augmenting $\widetilde{\boldsymbol F}^{\tau,h}$ with ${\boldsymbol G}^{\tau, h}$, we also obtain the desired conservative property.
\begin{thm}
Let $\widetilde{\Lambda}^{\tau, h}, \Sigma^{\tau,h}, \boldsymbol G^{\tau, h}$ be finite difference discretizations of $\widetilde{\Lambda}, \Sigma, \boldsymbol G$. Let $\boldsymbol u^{\tau,h}:\mesh\rightarrow \mathbb{R}^m$ be a discrete function. Let $D_t^{\tau, h}\boldsymbol \psi$ and $D_{\boldsymbol x}^{\tau, h}\cdot \Phi$ be the following discretizations,
\begin{subequations}
\begin{align}
D_t^{\tau, h}\boldsymbol \psi\discdep{\boldsymbol u^{\tau,h}}{k,J} &= \frac{\boldsymbol\psi^{\tau,h}\discdep{\boldsymbol u^{\tau,h}}{k,J}-\boldsymbol\psi^{\tau,h}\discdep{\boldsymbol u^{\tau,h}}{k-1,J}}{\tau},  \label{dtAugSystem} \\
\left(D_{\boldsymbol x}^{\tau, h}\cdot \Phi \right)^j\discdep{\boldsymbol u^{\tau,h}}{k,J} &= \sum_{i=1}^n\frac{\phi^{j i,\tau,h}\discdep{\boldsymbol u^{\tau,h}}{k,J}-\phi^{j i,\tau,h}\discdep{\boldsymbol u^{\tau,h}}{k,J-\hat{i}}}{h}, \label{dxAugSystem}
\end{align}\end{subequations}where $\boldsymbol\psi^{\tau, h}$ and $\phi^{j i,\tau, h}$ are finite difference discretizations of $\boldsymbol\psi$ and $\phi^{j i}$. Let $\boldsymbol F^{\tau, h}$ be the corresponding discretization of \eqref{systemPDE} given by \eqref{discTildeF}. If $\boldsymbol u^{\tau,h}$ is a solution to the discrete problem \begin{equation}\boldsymbol F^{\tau, h}\discdep{\boldsymbol u^{\tau,h}}{k,J}=\begin{pmatrix}
\widetilde{\boldsymbol F}^{\tau, h}\discdep{\boldsymbol u^{\tau,h}}{k,J} \\ {\boldsymbol G}^{\tau, h}\discdep{\boldsymbol u^{\tau,h}}{k,J}
  \end{pmatrix} = \boldsymbol 0, \hspace{3mm}{\boldsymbol x}_J\in \Omega^h, \label{discAugSystem}\end{equation} then the discrete divergence theorem \eqref{discDivThmSystem} holds for $\boldsymbol u^{\tau,h}$.
 \label{vectorAugCL}
\end{thm}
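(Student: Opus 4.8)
The plan is to reduce this statement to the already-established square-system conservation result, Theorem \ref{vectorCL}, by exploiting the block structure of the discretization \eqref{discTildeF}. The essential observation is that the extra coupling term $\Sigma^{\tau, h}\boldsymbol G^{\tau, h}$, introduced in \eqref{discTildeF} solely to enforce consistency of the top block, vanishes identically once we evaluate at a discrete solution, so the conservation argument collapses to the one already carried out.

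First I would unpack the hypothesis. Equation \eqref{discAugSystem} asserts $\boldsymbol F^{\tau, h}\discdep{\boldsymbol u^{\tau,h}}{k,J}=\boldsymbol 0$ on every $\boldsymbol x_J\in\Omega^h$, and since $\boldsymbol F^{\tau, h}$ is partitioned into its two blocks, this forces $\widetilde{\boldsymbol F}^{\tau, h}\discdep{\boldsymbol u^{\tau,h}}{k,J}=\boldsymbol 0$ and $\boldsymbol G^{\tau, h}\discdep{\boldsymbol u^{\tau,h}}{k,J}=\boldsymbol 0$ separately. Next I would invoke the definition of the top block in \eqref{discTildeF}: multiplying the identity $\widetilde{\boldsymbol F}^{\tau, h}\disc{\boldsymbol u^{\tau,h}}=\boldsymbol 0$ on the left by the invertible matrix $\widetilde{\Lambda}^{\tau, h}\disc{\boldsymbol u^{\tau,h}}$ cancels the inverse and yields
\begin{equation*}
D_t^{\tau, h}\boldsymbol\psi\disc{\boldsymbol u^{\tau,h}} + D_{\boldsymbol x}^{\tau, h}\cdot\Phi\disc{\boldsymbol u^{\tau,h}} - \Sigma^{\tau, h}\disc{\boldsymbol u^{\tau,h}}\,{\boldsymbol G}^{\tau, h}\disc{\boldsymbol u^{\tau,h}} = \boldsymbol 0
\end{equation*}
at each $\boldsymbol x_J\in\Omega^h$. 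Because the second block gives $\boldsymbol G^{\tau, h}\disc{\boldsymbol u^{\tau,h}}=\boldsymbol 0$, the coupling term drops out entirely, leaving exactly
\begin{equation*}
D_t^{\tau, h}\boldsymbol\psi\disc{\boldsymbol u^{\tau,h}} + D_{\boldsymbol x}^{\tau, h}\cdot\Phi\disc{\boldsymbol u^{\tau,h}} = \boldsymbol 0
\end{equation*}
on every mesh point, which is precisely the intermediate identity \eqref{t5e1} that launched the square-system proof.

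Finally, since the telescoping discretizations \eqref{dtAugSystem}--\eqref{dxAugSystem} for the density and fluxes are identical in form to those of Theorem \ref{vectorCL}, I would reuse its discrete-divergence argument verbatim: summing each component of the last identity over $\boldsymbol x_J\in\Omega^h$, the backward time difference \eqref{dtAugSystem} produces the density term, while the backward spatial difference \eqref{dxAugSystem} telescopes along each coordinate direction to a boundary sum weighted by the outward normals $\nu_i^J$, reproducing \eqref{discDivThmSystem}. I do not anticipate a genuine obstacle; the only point needing care is the clean algebraic cancellation of $\Sigma^{\tau, h}\boldsymbol G^{\tau, h}$, which is exactly what allows the augmented scheme to inherit the conservation property of the square case without requiring any new estimate. (In particular, no use is made of the consistency bounds of Theorem \ref{augSystemThm}; only the invertibility of $\widetilde{\Lambda}^{\tau, h}$ and the two vanishing block equations are needed.)
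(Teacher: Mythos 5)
Your proposal is correct and follows essentially the same route as the paper: both proofs extract $\widetilde{\boldsymbol F}^{\tau,h}=\boldsymbol 0$ and $\boldsymbol G^{\tau,h}=\boldsymbol 0$ from \eqref{discAugSystem}, left-multiply the top block by $\widetilde{\Lambda}^{\tau,h}$ to recover $D_t^{\tau,h}\boldsymbol\psi + D_{\boldsymbol x}^{\tau,h}\cdot\Phi - \Sigma^{\tau,h}\boldsymbol G^{\tau,h}=\boldsymbol 0$, drop the coupling term using $\boldsymbol G^{\tau,h}=\boldsymbol 0$, and then reuse the telescoping argument of the scalar/square case componentwise. Your closing remark that no consistency estimates are needed is also consistent with the paper's purely algebraic proof.
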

\begin{proof} If $\boldsymbol u^{\tau,h}$ satisfies \eqref{discAugSystem}, $\widetilde{\boldsymbol F}^{\tau, h}\discdep{\boldsymbol u^{\tau,h}}{k,J}={\boldsymbol 0}$ and ${\boldsymbol G}^{\tau, h}\discdep{\boldsymbol u^{\tau,h}}{k,J} = {\boldsymbol 0}$ on each ${\boldsymbol x}_J\in \Omega^h$. Hence,
\begin{align}
\boldsymbol 0 &= (\tilde{\Lambda}^{\tau, h}\discdep{\boldsymbol u^{\tau,h}}{k,J}) \widetilde{\boldsymbol F}^{\tau, h}\discdep{\boldsymbol u^{\tau,h}}{k,J} \nonumber \\
&= D_t^{\tau, h} {\boldsymbol \psi}\discdep{\boldsymbol u^{\tau,h}}{k,J} + D_{\boldsymbol x}^{\tau, h}\cdot \Phi\discdep{\boldsymbol u^{\tau,h}}{k,J} - \Sigma^{\tau, h}\discdep{\boldsymbol u^{\tau,h}}{k,J}{\boldsymbol G}^{\tau, h}\discdep{\boldsymbol u^{\tau,h}}{k,J} \nonumber\\
&=  D_t^{\tau, h} {\boldsymbol \psi}\discdep{\boldsymbol u^{\tau,h}}{k,J} + D_{\boldsymbol x}^{\tau, h}\cdot \Phi\discdep{\boldsymbol u^{\tau,h}}{k,J}. \label{t7e1}
\end{align}Now apply the same argument as in the scalar case to each component of \eqref{t7e1}.
\end{proof} 
\begin{cor} Let $\boldsymbol\psi^{\tau,h}, \Phi^{\tau,h}$ be as given in Theorem \ref{vectorAugCL} and $\boldsymbol u^{\tau,h}$ be a solution to \eqref{discAugSystem}. If the discretized fluxes $\Phi^{\tau,h}\disc{\boldsymbol u^{\tau,h}}$ vanish on the boundary $\partial\Omega^h$, then
\begin{align*}
\sum_{{\boldsymbol x}_J \in \Omega^h}\boldsymbol\psi^{\tau,h}\discdep{\boldsymbol u^{\tau,h}}{k,J}=\sum_{{\boldsymbol x}_J \in \Omega^h}\boldsymbol\psi^{\tau,h}\discdep{\boldsymbol u^{\tau,h}}{k-1,J}.
\end{align*}
\end{cor}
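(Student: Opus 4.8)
The plan is to obtain this as an immediate consequence of Theorem \ref{vectorAugCL}, exactly as the analogous corollaries follow from Theorems \ref{scalarCL} and \ref{vectorCL}. First I would observe that the hypotheses here are precisely those of Theorem \ref{vectorAugCL}: the discrete function $\boldsymbol u^{\tau,h}$ solves \eqref{discAugSystem}, and $D_t^{\tau,h}\boldsymbol\psi$ together with $D_{\boldsymbol x}^{\tau,h}\cdot\Phi$ are taken to be the discretizations \eqref{dtAugSystem}--\eqref{dxAugSystem}. Hence Theorem \ref{vectorAugCL} applies verbatim and the discrete divergence theorem \eqref{discDivThmSystem} holds for $\boldsymbol u^{\tau,h}$ at the time level $t_k$.

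Next I would invoke the additional hypothesis of the corollary, namely that $\Phi^{\tau,h}\disc{\boldsymbol u^{\tau,h}}$ vanishes on $\partial\Omega^h$. The boundary contribution appearing in \eqref{discDivThmSystem} is the single surface sum $\frac{1}{h}\sum_{\boldsymbol x_J\in\partial\Omega^h}\Phi^{\tau,h}\discdep{\boldsymbol u^{\tau,h}}{k,J}\cdot\boldsymbol\nu^J$, and every summand carries the factor $\Phi^{\tau,h}\discdep{\boldsymbol u^{\tau,h}}{k,J}$ with $\boldsymbol x_J\in\partial\Omega^h$; by the vanishing hypothesis each such factor is zero, so the whole boundary sum is the zero vector.

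With the flux term eliminated, \eqref{discDivThmSystem} collapses to $\boldsymbol 0=\frac{1}{\tau}\sum_{\boldsymbol x_J\in\Omega^h}\left(\boldsymbol\psi^{\tau,h}\discdep{\boldsymbol u^{\tau,h}}{k,J}-\boldsymbol\psi^{\tau,h}\discdep{\boldsymbol u^{\tau,h}}{k-1,J}\right)$, and multiplying through by $\tau$ and splitting the difference into two separate sums gives the asserted equality $\sum_{\boldsymbol x_J\in\Omega^h}\boldsymbol\psi^{\tau,h}\discdep{\boldsymbol u^{\tau,h}}{k,J}=\sum_{\boldsymbol x_J\in\Omega^h}\boldsymbol\psi^{\tau,h}\discdep{\boldsymbol u^{\tau,h}}{k-1,J}$. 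I expect no genuine obstacle in this argument: the substantive content—the discrete divergence theorem itself and the telescoping of the flux differences over $\Omega^h$ into a boundary sum—was already established in the proof of Theorem \ref{vectorAugCL}, so the corollary only requires the boundary hypothesis to annihilate the surface term and a trivial rearrangement of the remaining density sum.
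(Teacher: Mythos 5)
Your proposal is correct and matches the paper's (implicit) argument: the corollary is an immediate consequence of Theorem \ref{vectorAugCL}, obtained by noting the boundary sum in \eqref{discDivThmSystem} vanishes under the flux hypothesis and then multiplying the remaining identity by $\tau$. The paper states this corollary without a written proof precisely because the content is already contained in the discrete divergence theorem, exactly as you observe.
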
\noindent Again, the proposed scheme in this case has exact discrete conservation.
}
\section{Examples}
\label{sec:examples}
\aw{We now illustrate our method with examples of the three cases discussed in the theory section. We begin with examples involving scalar and systems of ODEs, followed by scalar and systems of PDEs.}

\subsection{Pendulum problem}
\label{sec:pendProblem}
We first begin with the ODE for the pendulum problem,
\rev{\begin{equation*}
F\cont{\theta}:=\theta_{tt}+\frac{g}{l} \sin (\theta)=0, \label{pendEqn}
\end{equation*}}\noindent where $g$ is the gravitational acceleration, $l$ is the length of the pendulum arm and $\theta$ is the displacement angle of the pendulum. It is well-known from classical physics that the energy of such a system is conserved. In particular, we have one conservation law
\rev{\begin{equation*}
\left.D_t\left( \frac{1}{2}(\theta_t)^2 - \frac{g}{l} \cos(\theta)\right)\right|_{F\cont{\theta}=0}=\left.\lambda\cont{\theta}  F\cont{\theta}\frac{}{}\right|_{F\cont{\theta}=0}=0, \label{pendCL}
\end{equation*}}\noindent with the conservation law multiplier,
\rev{\begin{equation}
\lambda\cont{\theta}=\theta_t. \label{pendMult} \nonumber
\end{equation}}Note that we could have also found the multiplier and conservation law through the use of Euler operator.
Now suppose we discretize $\psi$, $\lambda$ with the following expressions,
\rev{\begin{align}
\psi^{\tau}\discdep{\theta^{\tau}}{n} &:= \frac{1}{2}\left(\frac{\theta_{n+1}-\theta_n}{\tau}\right)^2-\frac{g}{2l}\left(\cos(\theta_{n+1})+\cos(\theta_{n})\right), \label{pendPsi} \\
\lambda^{\tau}\discdep{\theta^{\tau}}{n}&:=\frac{\theta_{n+1}-\theta_{n-1}}{2\tau}. \label{pendDiscMult}
\end{align}}\noindent Then, $D_t^{\tau}\psi$ defined by \eqref{dtScalar} simplifies to,
\rev{\begin{align}D_t^{\tau}\psi\discdep{\theta^{\tau}}{n}&:
  = \frac{(\theta_{n+1}-\theta_{n-1})(\theta_{n+1}-2\theta_n+\theta_{n-1})}{2\tau^3}-\frac{g}{l}\frac{\cos(\theta_{n+1})-\cos(\theta_{n-1})}{2\tau}, \label{pendDPsi}
\end{align}}\noindent and so by \eqref{discScalarF}, we have the discretization of $F$,
\rev{\begin{align}F^{\tau}\discdep{\theta^{\tau}}{n}&:
  = \frac{\theta_{n+1}-2\theta_n+\theta_{n-1}}{\tau^2}-\frac{g}{l}\frac{\cos(\theta_{n+1})-\cos(\theta_{n-1})}{\theta_{n+1}-\theta_{n-1}}= 0. \label{pendDisc} 
\end{align}}\noindent Note that the discrete density in \eqref{pendDPsi} is actually second order accurate to $\psi$ in $\tau$, even though $D_t^{\tau}\psi$ defined by \eqref{dtScalar} may appear to be first order. Combining with the fact that $\lambda^{\tau}$ is also second order accurate, $F^{\tau}$ is second order accurate as well by the consistency Theorem \ref{scalarThm}. Multiplying \eqref{pendDisc} by \eqref{pendDiscMult} shows that the discrete density \eqref{pendPsi} is conserved for solutions of \eqref{pendDisc}, as claimed by Theorem \ref{scalarCL}.

\subsection{Damped harmonic oscillator}
\label{sec:dho}
Recall the damped harmonic oscillator from classical mechanics,
\rev{\begin{equation}
F\cont{x}:=mx_{tt}+\gamma x_t+k x=0, \label{dhoEqn}
\end{equation}}\noindent where $x(t)$ is the displacement of an object with mass $m$ attached to a spring with a spring constant $k$ and a damping coefficient $\gamma$. While the energy is not conserved due to dissipation, it can be found using the method of Euler operator that \eqref{dhoEqn} has the following non-trivial conservation law
\rev{\begin{equation}
\left.D_t\left( \frac{e^{\frac{\gamma}{m} t}}{2}\left(m\left(x_t+\frac{\gamma}{2m}x\right)^2+\left(k-\frac{\gamma^2}{4m}\right)x^2\right) \right)\right|_{F\cont{x}=0}=\left.\lambda\cont{x}  F\cont{x}\right|_{F\cont{x}=0}=0,  \label{dhoCL}
\end{equation}}\noindent with the conservation law multiplier,
\rev{\begin{equation}
\lambda\cont{x} = e^{\frac{\gamma t}{m}}\left(x_t+\frac{\gamma}{2m}x\right). \label{dhoMult}
\end{equation}}\noindent We have included the derivation for the conservation law \eqref{dhoCL} and multiplier \eqref{dhoMult} in Appendix A.\\
Note that the density function $\psi$ and multiplier $\lambda$ can be rewritten as 
\rev{$$\psi\cont{x} = \frac{m}{2}\left((e^{\frac{\gamma t}{2m}}x)_t\right)^2+\frac{1}{2}\left(k-\frac{\gamma^2}{4m}\right)(e^{\frac{\gamma t}{2m}}x)^2,$$
$$\lambda\cont{x} = e^{\frac{\gamma t}{2m}}(e^{\frac{\gamma t}{2m}}x)_t.$$}\noindent Thus choosing the following discretization for $\psi$ and $\lambda$,
\rev{\begin{align}
\psi^\tau\discdep{x^{\tau}}{n}&:=\frac{m}{2}\left(\frac{e^{\frac{\gamma t_{n+1}}{2m}}x_{n+1}-e^{\frac{\gamma t_{n}}{2m}}x_{n}}{\tau}\right)^2+\frac{1}{2}\left(k-\frac{\gamma^2}{4m}\right)\left(\frac{e^{\frac{\gamma t_{n+1}}{2m}}x_{n+1}+e^{\frac{\gamma t_{n}}{2m}}x_{n}}{2}\right)^2, \label{dhoPsi} \\
\lambda^\tau\discdep{x^{\tau}}{n} &:= e^{\frac{\gamma t_n}{2m}}\left(\frac{e^{\frac{\gamma t_{n+1}}{2m}}x_{n+1}-e^{\frac{\gamma t_{n-1}}{2m}}x_{n-1}}{2\tau}\right), \label{dhoDiscMult}
\end{align}}\noindent $D_t^\tau \psi$ defined by \eqref{dtScalar} simplifies to,
\rev{\begin{align}
D_t^\tau \psi\discdep{x^{\tau}}{n} =& m\left(\frac{e^{\frac{\gamma t_{n+1}}{2m}}x_{n+1}-2e^{\frac{\gamma t_{n}}{2m}}x_n+e^{\frac{\gamma t_{n-1}}{2m}}x_{n-1}}{\tau^2}\right)\left(\frac{e^{\frac{\gamma t_{n+1}}{2m}}x_{n+1}-e^{\frac{\gamma t_{n-1}}{2m}}x_{n-1}}{2\tau}\right) \nonumber \\
&+\left(k-\frac{\gamma^2}{4m}\right)\left(\frac{e^{\frac{\gamma t_{n+1}}{2m}}x_{n+1}+2e^{\frac{\gamma t_{n}}{2m}}x_n+e^{\frac{\gamma t_{n-1}}{2m}}x_{n-1}}{4}\right)\left(\frac{e^{\frac{\gamma t_{n+1}}{2m}}x_{n+1}-e^{\frac{\gamma t_{n-1}}{2m}}x_{n-1}}{2\tau}\right). \label{dhoDPsi}
\end{align}}\noindent By \eqref{discScalarF}, we have the discretization of $F$,
\rev{\begin{align}
F^{\tau}\discdep{x^{\tau}}{n}&:=m\left(\frac{e^{\frac{\gamma \tau}{2m}}x_{n+1}-2x_n+e^{-\frac{\gamma \tau}{2m}}x_{n-1}}{\tau^2}\right)+\left(k-\frac{\gamma^2}{4m}\right)\left(\frac{e^{\frac{\gamma \tau}{2m}}x_{n+1}+2x_n+e^{-\frac{\gamma \tau}{2m}}x_{n-1}}{4}\right)=0. \label{dhoDisc}
\end{align}}Indeed, it can be checked that the first term in \eqref{dhoDisc} approaches $mx_{tt}+\gamma x_t+\frac{\gamma^2}{4m}x$ as $\tau\rightarrow 0$ by l'H\^{o}pital's rule and the second term in \eqref{dhoDisc} approaches $\left(k-\frac{\gamma^2}{4m}\right)x$ as $\tau\rightarrow 0$. In other words, $F^\tau_n$ defined in \eqref{dhoDisc} is consistent with $F$ in \eqref{dhoEqn}, as claimed by Theorem \ref{scalarThm}. Moreover, {since $\lambda^\tau$ in \eqref{dhoDiscMult} and $D_t^\tau \psi$ in \eqref{dhoDPsi} are second order accurate, $F^{\tau}$ is second order accurate as well by Theorem \ref{scalarThm}.
}
Multiplying \eqref{dhoDisc} by \eqref{dhoDiscMult} shows that the discrete density \eqref{dhoPsi} is conserved for solutions of \eqref{dhoDisc}, as claimed by Theorem \ref{scalarCL}. {In Appendix B, we include numerical verification of the order of accuracy and of the exact conservation for the discrete density $\psi^\tau$ in \eqref{dhoPsi}.}

\subsection{Two body problem}
Consider the ODE system of the two-body problem in 1D arising from classical physics,
\rev{\begin{equation}
\boldsymbol F\cont{\boldsymbol x}:=\begin{bmatrix}
    x^1_{tt} - V'(x^1-x^2)\\
    x^2_{tt} - V'(x^2-x^1)\\
  \end{bmatrix}=\boldsymbol 0, \label{twoBodyEqn} \nonumber
\end{equation}}where $x^1, x^2$ are position of the two particles and $V$ is the interaction potential satisfying $V'(-z)=-V'(z)$. In this case, it's known from Noether's theorem that both momentum and energy is conserved. In particular, we have exactly two conservation laws
\rev{\begin{equation}
\left.D_t\begin{pmatrix}
    x^1_t + x^2_t\\
     \frac{(x^1_t)^2 + (x^2_t)^2}{2}+V(x^1-x^2)\\
  \end{pmatrix}\right|_{\boldsymbol F\cont{\boldsymbol x}=0}=\left.\Lambda\cont{\boldsymbol x} \boldsymbol F\cont{\boldsymbol x}\frac{}{}\right|_{\boldsymbol F\cont{\boldsymbol x}=0}=0, \label{twoBodyCL} \nonumber
\end{equation}}\noindent and two sets of conservation law multipliers $\Lambda$,
\rev{\begin{equation}
\Lambda\cont{\boldsymbol x}=\begin{pmatrix}
    1 & 1\\
    x^1_t & x^2_t\\
  \end{pmatrix}, \label{twoBodyMult} \nonumber
\end{equation}}\noindent As in the previous two examples, we could have also found the multipliers and conservation laws through the method of Euler operator.
We discretize $\boldsymbol \psi$ and $\Lambda$ by,
\rev{\begin{align}\boldsymbol \psi^\tau\discdep{\boldsymbol x^{\tau}}{n}&: =\begin{pmatrix}
    \frac{x^1_{n+1}-x^1_{n}}{\tau}+ \frac{x^2_{n+1}-x^2_{n}}{\tau}\\
    \frac{1}{2}\left(\frac{x^1_{n+1}-x^1_{n}}{\tau}\right)^2+\frac{1}{2}\left(\frac{x^2_{n+1}-x^2_{n}}{\tau}\right)^2+ \frac{V(x^1_{n+1}-x^2_{n+1})+V(x^1_{n}-x^2_{n})}{2}\\
  \end{pmatrix}, \label{twoBodyPsi} \\
  \Lambda^{\tau}\discdep{\boldsymbol x^{\tau}}{n}&:=\begin{pmatrix}
    1 & 1\\
    \frac{x^1_{n+1}-x^1_{n-1}}{2\tau} & \frac{x^2_{n+1}-x^2_{n-1}}{2\tau}\\
  \end{pmatrix}. \label{twoBodyDiscMult}
\end{align}}\noindent Then $D_t^{\tau}\boldsymbol \psi$ given by \eqref{dtSystem} simplifies to
\rev{\begin{align}
D_t^{\tau}\boldsymbol \psi\discdep{\boldsymbol x^{\tau}}{n}&:
  =\begin{pmatrix}
    \frac{x^1_{n+1}-2x^1_{n}+x^1_{n-1}}{\tau^2}+ \frac{x^2_{n+1}-2x^2_{n}+x^2_{n-1}}{\tau^2}\\
    \frac{(x^1_{n+1}-x^1_{n-1})(x^1_{n+1}-2x^1_{n}+x^1_{n-1})}{2\tau^3}+\frac{(x^2_{n+1}-x^2_{n-1})(x^2_{n+1}-2x^2_{n}+x^2_{n-1})}{2\tau^3}+ \frac{V(x^1_{n+1}-x^2_{n+1})-V(x^1_{n-1}-x^2_{n-1})}{2\tau}\\
  \end{pmatrix}. \label{twoBodyDPsi}
\end{align}}\noindent Since $\Lambda^{\tau,h}$ in \eqref{twoBodyDiscMult} and $D_t^{\tau}\boldsymbol \psi$ in \eqref{twoBodyDPsi} are both second order accurate, we have by Theorem \ref{systemThm} that the second order accurate discretization $\boldsymbol F^\tau$,
\rev{\begin{equation}
\boldsymbol F^{\tau}\discdep{\boldsymbol x^{\tau}}{n}:=\begin{pmatrix}
    \frac{x^1_{n+1}-2x^1_{n}+x^1_{n-1}}{\tau^2}+\frac{V(x^1_{n+1}-x^2_{n+1})-V(x^1_{n-1}-x^2_{n-1})}{x^1_{n+1}-x^2_{n+1}-x^1_{n-1}+x^2_{n-1}}\\
    \frac{x^2_{n+1}-2x^2_{n}+x^2_{n-1}}{\tau^2}+\frac{V(x^2_{n+1}-x^1_{n+1})-V(x^2_{n-1}-x^1_{n-1})}{x^2_{n+1}-x^1_{n+1}-x^2_{n-1}+x^1_{n-1}}\\
  \end{pmatrix}=\boldsymbol 0. \label{twoBodyDisc}
\end{equation}}\noindent Multiplying \eqref{twoBodyDisc} by \eqref{twoBodyDiscMult} shows that the discrete densities \eqref{twoBodyPsi} are preserved.

\subsection{Lotka-Volterra equations}
Consider the Lotka-Volterra equations or the predator-prey equations,
\rev{\begin{equation}
\boldsymbol F\cont{\boldsymbol x}:=\begin{pmatrix}
    x_t - x(a-by)\\
    y_t + y(c-dx)\\
  \end{pmatrix}=\boldsymbol 0, \label{predPreyEqn} \nonumber
\end{equation}}where $a,b,c,d$ are positive constants. It is known that this system has one conservation law,
\rev{\begin{equation}
\left.D_t\left(\frac{}{}\log(x^c y^a)-dx-by\right)\right|_{\boldsymbol F\cont{\boldsymbol x}=0}=\left.\Lambda \cont{\boldsymbol x}\boldsymbol F\cont{\boldsymbol x}\frac{}{}\right|_{\boldsymbol F\cont{\boldsymbol x}=0}=0, \label{predPreyCL} \nonumber
\end{equation}}with the $1\times 2$ multiplier matrix $\Lambda$,
\rev{\begin{equation}
\Lambda\cont{\boldsymbol x}=\begin{pmatrix}
\frac{c}{x}-d & \frac{a}{y}-b\\
  \end{pmatrix}. \label{predPreyMult}
\end{equation}}In particular, this system is of the kind where the number of conservation laws is less than the number of equations. Hence, we first partition \eqref{predPreyMult} into $1\times 1$ matrices $\widetilde{\Lambda}$ and $\Sigma$,
\rev{\begin{align}
\widetilde{\Lambda}\cont{\boldsymbol x}=\begin{pmatrix}
\frac{c}{x}-d 
  \end{pmatrix}, \nonumber \\
\Sigma\cont{\boldsymbol x}=\begin{pmatrix}
\frac{a}{y}-b
  \end{pmatrix}, \nonumber
\end{align}}\noindent so that \rev{$G\cont{\boldsymbol x}=y_t + y(c-dx)$}. In order to use \eqref{discTildeF}, we discretize $\psi$, $\widetilde{\Lambda}$, $\Sigma$, $G$ by
\rev{\begin{align}\psi^\tau\discdep{\boldsymbol x^\tau}{n}&: = \log(x^c_n y^a_n)-dx_n-by_n,\label{predPreyPsi} \\
  \widetilde{\Lambda}^{\tau}\discdep{\boldsymbol x^\tau}{n}&:=\begin{pmatrix}
    \frac{c}{x_n}-d
  \end{pmatrix}, \nonumber\\
  \Sigma^{\tau}\discdep{\boldsymbol x^\tau}{n}&:=\begin{pmatrix}
    \frac{a}{y_n}-b
  \end{pmatrix}, \nonumber\\
  G^{\tau}\discdep{\boldsymbol x^\tau}{n}&:=  \frac{y_{n}-y_{n-1}}{\tau} + y_{n}(c-dx_{n}). \nonumber
\end{align}}\noindent Then $D_t^{\tau} \psi$ defined by \eqref{dtAugSystem} is, 
\rev{\begin{align*}D_t^{\tau}\psi\discdep{\boldsymbol x^\tau}{n}&:
  = \frac{1}{\tau}\left(\log\left(\frac{x^c_{n} y^a_{n}}{x^c_{n-1} y^a_{n-1}}\right)-d(x_{n}-x_{n-1})-b(y_{n}-y_{n-1})\right).
\end{align*}}{Although $\widetilde{\Lambda}^{\tau}$ and $\widetilde{\Sigma}^{\tau}$ are exact discretization of $\widetilde{\Lambda}$ and $\widetilde{\Sigma}$, $D_t^{\tau} \psi$ and $G^{\tau}$ are only first order accurate.
So the discretization ${\boldsymbol F}^\tau$ given by \eqref{discTildeF} is at most first order by Theorem \ref{augSystemThm} and it simplifies to,}
\rev{\begin{equation}
\boldsymbol F^\tau\discdep{\boldsymbol x^\tau}{n}:=\begin{pmatrix}\frac{1}{\frac{c}{x_n}-d}\left[c\frac{\log x_{n}-\log x_{n-1}}{\tau}-d\frac{x_{n}-x_{n-1}}{\tau}-\left(\frac{c}{x_n}-d\right)x_n(a-by_n)+a\frac{\log y_{n}-\log y_{n-1}}{\tau}-\frac{a}{y_n}\frac{y_{n}-y_{n-1}}{\tau}\right] \\ \frac{y_{n}-y_{n-1}}{\tau} + y_{n}(c-dx_{n})\end{pmatrix}=\boldsymbol 0.
 \label{predPreyDisc}
\end{equation}}It can be seen that by taking $\tau \rightarrow 0$, the first equation of $\boldsymbol F^{\tau}$ is consistent with the first equation of $\boldsymbol F$. Moreover, it can be checked that solutions of \eqref{predPreyDisc} preserve \eqref{predPreyPsi}.

\subsection{Non-dissipative Lorenz equations}
Consider the non-dissipative Lorenz equations,
\rev{\begin{equation*}
\boldsymbol F\cont{\boldsymbol x}:=\begin{pmatrix}
    x_t - \sigma y\\
    y_t - x(r-z)\\
    z_t - xy
  \end{pmatrix}=\boldsymbol 0
\end{equation*}}\noindent where $\sigma, r$ are positive constants. It was found in \cite{NevBle94} that this system has $2$ conservation laws, 
\rev{\begin{equation*}
\left.D_t\begin{pmatrix}
    z-\frac{x^2}{2\sigma}\\
     \frac{y^2}{2}+\frac{z^2}{2}-rz\\
  \end{pmatrix}\right|_{\boldsymbol F\cont{\boldsymbol x}=0}=\left.\Lambda\cont{\boldsymbol x} \boldsymbol F\cont{\boldsymbol x}\frac{}{}\right|_{\boldsymbol F\cont{\boldsymbol x}=0}=0,
\end{equation*}}\noindent with the $2\times 3$ multiplier matrix $\Lambda$,
\rev{\begin{equation}
\Lambda\cont{\boldsymbol x}=\begin{pmatrix}
-\frac{x}{\sigma} & 0 & 1\\
0 & y & z-r\\
  \end{pmatrix}. \label{lorenzMult}
\end{equation}}\noindent Similar to the previous example, we partition \eqref{lorenzMult} into $2 \times 2$ matrix $\widetilde{\Lambda}$ and $1 \times 2$ matrix $\Sigma$,
\rev{\begin{align}
\widetilde{\Lambda}\cont{\boldsymbol x}=\begin{pmatrix}
-\frac{x}{\sigma} & 0 \\
0 & y
  \end{pmatrix}, \nonumber \\
\Sigma\cont{\boldsymbol x}=\begin{pmatrix}
1 \\ z-r
  \end{pmatrix}, \nonumber
\end{align}}with \rev{$G\cont{\boldsymbol x} = z_t - xy$}. To simplify the final discretization as much as possible, we choose the following discretization $\psi$, $\widetilde{\Lambda}$, $\Sigma$, $G$ by
\rev{\begin{align}\boldsymbol \psi^\tau\discdep{\boldsymbol x^\tau}{n}&: = \begin{pmatrix}
z_n- \frac{x_n^2}{2\sigma} \\
\frac{y_n^2}{2}+\frac{z_n^2}{2}-rz_n
\end{pmatrix},\label{lorenzPsi} \\
  \widetilde{\Lambda}^{\tau}\discdep{\boldsymbol x^\tau}{n}&:=\begin{pmatrix}
    -\frac{x_n+x_{n-1}}{2\sigma} & 0\\
    0 & \frac{y_n+y_{n-1}}{2}
  \end{pmatrix}, \label{lorenzDiscMult1}\\
  \Sigma^{\tau}\discdep{\boldsymbol x^\tau}{n}&:=\begin{pmatrix}
    1 \\
    \frac{z_n+z_{n-1}}{2}-r
  \end{pmatrix}, \label{lorenzDiscMult2}\\
  G^{\tau}\discdep{\boldsymbol x^\tau}{n}&:=  \frac{z_{n}-z_{n-1}}{\tau}-\frac{x_{n}+x_{n-1}}{2}\frac{y_{n}+y_{n-1}}{2}. \nonumber
\end{align}}Then $D_t^{\tau} \psi$ defined by \eqref{dtAugSystem} is, 
\rev{\begin{align*}D_t^{\tau}\boldsymbol \psi\discdep{\boldsymbol x^\tau}{n}&:
  =\begin{pmatrix}
    \frac{z_{n}-z_{n-1}}{\tau}-\frac{x_{n}^2-x_{n-1}^2}{2\sigma\tau}\\
    \frac{y_{n}^2-y_{n-1}^2}{2\tau}+\frac{z_{n}^2-z_{n-1}^2}{2\tau} -r\frac{z_{n}-z_{n-1}}{\tau}
  \end{pmatrix}.
\end{align*}}{Since $\widetilde{\Lambda}^{\tau}$ and $\widetilde{\Sigma}^{\tau}$ are second order accurate and $D_t^{\tau} \psi$ and $G^{\tau}$ are only first order accurate, we expect the discretization to be at most first order by Theorem \ref{augSystemThm}.}
Using \eqref{discTildeF}, the discretization ${\boldsymbol F}^\tau$ simplifies to,
\rev{\begin{equation}
\boldsymbol F^\tau\discdep{\boldsymbol x^\tau}{n}:=\begin{pmatrix}
\frac{x_{n}-x_{n-1}}{\tau}-\sigma\frac{y_{n}+y_{n-1}}{2}\\
\frac{y_{n}-y_{n-1}}{\tau}-\frac{x_{n}+x_{n-1}}{2}(r-\frac{z_{n}+z_{n-1}}{2})\\
\frac{z_{n}-z_{n-1}}{\tau}-\frac{x_{n}+x_{n-1}}{2}\frac{y_{n}+y_{n-1}}{2}
\end{pmatrix}=\boldsymbol 0.
 \label{lorenzDisc}
\end{equation}}Multiplying \eqref{lorenzDisc} by the discrete multiplier $\Lambda^{\tau} = \left(\widetilde{\Lambda}^{\tau} \hspace{2mm} \Sigma^{\tau}\right)$ defined by \eqref{lorenzDiscMult1}, \eqref{lorenzDiscMult2} shows the discrete densities \eqref{lorenzPsi} are preserved.

\subsection{Inviscid Burgers' equation}
Next we illustrate this method for the inviscid Burgers' equation,
\rev{\begin{equation}
F\cont{u}:=u_t+u u_x=0. \label{burgerEqn}
\end{equation}}Writing \eqref{burgerEqn} in conserved form, it is well-known that it has the conservation law,
\rev{\begin{equation*}
\left.\left(D_t u+D_x \left(\frac{u^2}{2}\right)\right)\right|_{F\cont{u}=0}=\left.\lambda\cont{u} \cdot F\cont{u}\frac{}{}\right|_{F\cont{u}=0}=0,
\end{equation*}}which corresponds to a multiplier of \rev{$\lambda\cont{u}=1$}. Now suppose we discretize $\psi$, $\phi$, $\lambda$ with the following expressions,\rev{\begin{align}\psi^{\tau,h}\discdep{u^{\tau,h}}{n,j} &:= u_{n,j}, \nonumber \\
\phi^{\tau,h}\discdep{u^{\tau,h}}{n,j} &:= \frac{u_{n,j}^2}{2}, \label{burgerPsi} \\
\lambda^{\tau,h}\discdep{u^{\tau,h}}{n,j}&:=1. \nonumber
\end{align}}Then, $D_t^{\tau,h}\psi$, $D_x^{\tau,h}\phi$ defined by \eqref{dtScalar}, \eqref{dxScalar} are,
\rev{\begin{align}D_t^{\tau,h}\psi\discdep{u^{\tau,h}}{n,j}&:
  = \frac{u_{n,j}-u_{n-1,j}}{\tau}, \nonumber \\
  D_x^{\tau,h}\phi\discdep{u^{\tau,h}}{n,j}&:
  = \frac{(u_{n,j})^2-(u_{n,j-1})^2}{2h}. \nonumber
\end{align}}So by \eqref{discScalarF}, we obtain the well-known conservative discretization of \eqref{burgerEqn} which preserves \eqref{burgerPsi},
\rev{\begin{equation}
F^{\tau,h}\discdep{u^{\tau,h}}{n,j}:=\frac{u_{n,j}-u_{n-1,j}}{\tau}+\frac{(u_{n,j})^2-(u_{n,j-1})^2}{2h}=0, \label{burgerDisc1}
\end{equation}}{which is clearly first order in time and in space.}

To make this example more interesting, let us consider other conservation laws of \eqref{burgerEqn}. By the method of Euler operator (or inspired guess), the inviscid Burgers' equation has a family of conservation law multiplier \rev{$\lambda\cont{u}=u^{p-1}$} for $p\in \mathbb{N}$ with the conservation law,
\rev{\begin{equation*}
\left.\left(D_t\left(\frac{u^{p}}{p}\right)+D_x \left(\frac{u^{p+1}}{p+1}\right)\right)\right|_{F\cont{u}=0}=\left.\lambda\cont{u}\cdot F\cont{u}\frac{}{}\right|_{F\cont{u}=0}=0.
\end{equation*}}Similar to before, we discretize $\psi$, $\phi$ , $\lambda$ with the expressions,
\rev{\begin{align}
\psi^{\tau,h}\discdep{u^{\tau,h}}{n,j} &:= \frac{(u_{n,j})^p}{p}, \label{burgerPPsi} \\
\phi^{\tau,h}\discdep{u^{\tau,h}}{n,j} &:= \frac{u_{n,j}^{p+1}}{p+1}, \label{burgerPPhi} \\
\lambda^{\tau,h}\discdep{u^{\tau,h}}{n,j}&:=\frac{1}{p}\sum_{k=0}^{p-1}(u_{n,j})^{p-1-k}(u_{n-1,j})^{k}. \label{burgerPDiscMult}
\end{align}}It turns out the choice for the time averaged expression \eqref{burgerPDiscMult} of $\lambda^{\tau,h}_{n,j}$ simplifies the final expression of the discretization. Similar as before, $D_t^{\tau,h}\psi$, $D_x^{\tau,h}\phi$ defined by \eqref{dtScalar}, \eqref{dxScalar} are,
\rev{\begin{subequations}\begin{align}D_t^{\tau,h}\psi\discdep{u^{\tau,h}}{n,j}&:
  = \frac{(u_{n,j})^p-(u_{n-1,j})^p}{p\tau}, \label{burgerDPsi} \\
  D_x^{\tau,h}\phi\discdep{u^{\tau,h}}{n,j}&:
  = \frac{(u_{n,j})^{p+1}-(u_{n,j-1})^{p+1}}{(p+1)h}, \label{burgerDPhi}
\end{align}\end{subequations}}\noindent It can be shown that $\lambda^{\tau,h}$ in \eqref{burgerPPsi} and $D_t^{\tau,h}\psi$ in \eqref{burgerDPsi} are first order accurate in time and $D_x^{\tau,h}\phi$ is first order accurate in space. By Theorem \ref{scalarThm}, the discretization of \eqref{discScalarF} is first order in space and time. Employing the identity,  $\displaystyle a^{p+1}-b^{p+1}=(a-b)\sum_{k=0}^{p}a^{p-k}b^k$, $F^{\tau,h}$ simplifies to,
\rev{
\begin{align}
F^{\tau,h}\discdep{u^{\tau,h}}{n,j}&:=\frac{u_{n,j}-u_{n-1,j}}{\tau}\nonumber\\
&\hspace{6mm}+\frac{p}{p+1}\frac{(u_{n,j})^p+(u_{n,j})^{p-1}u_{n,j-1}+\cdots+u_{n,j}(u_{n,j-1})^{p-1}+(u_{n,j-1})^p}{(u_{n,j})^{p-1}+(u_{n,j})^{p-2}u_{n-1,j}+\cdots+u_{n,j}(u_{n-1,j})^{p-2}+(u_{n-1,j})^{p-1}}\frac{u_{n,j}-u_{n,j-1}}{h}=0, \label{burgerDisc2}
\end{align}}which can readily be checked to preserve the discrete density \eqref{burgerPPsi} \aw{when the fluxes \eqref{burgerPPhi} vanish at the boundaries.} Moreover, \eqref{burgerDisc2} can be seen as a generalization to \eqref{burgerDisc1}, with $p=1$.

\subsection{Korteweg-de Vries equation}
\label{sec:kdv}
Next we consider the Korteweg-de Vries equation,
\rev{\begin{equation}
F\cont{u}:=u_t+uu_x+u_{xxx}=0. \label{kdvEqn}
\end{equation}}As written, \eqref{kdvEqn} does not admit a variational principle. Indeed, it is well known that upon the change of variable $u=v_x$, \eqref{kdvEqn} is the Euler-Lagrange equations of a Lagrangian. Our main motivation here is to show that we can construct a conservative discretization of the Korteweg-de Vries equation without such transformation.

It can be checked (or by the method of Euler operator) that \eqref{kdvEqn} has a multiplier \rev{$\lambda \cont{u}= u$} with the corresponding density and flux,
\rev{\begin{equation*}
\left.\left(D_t \left(\frac{u^2}{2}\right)+D_x \left(\frac{u^3}{3}+uu_{xx}-\frac{u_x^2}{2}\right)\right)\right|_{F\cont{u}=0}=\left.\lambda\cont{u}\cdot F\cont{u}\frac{}{}\right|_{F\cont{u}=0}=0.
\end{equation*}}\noindent To simplify the final expression for $F^{\tau,h}$, we choose to discretize $\psi, \phi, \lambda$ as follows,
\rev{\begin{align}
\psi^{\tau,h}\discdep{u^{\tau,h}}{n,j} &:= \frac{u_{n,j}^2}{2}, \label{kdvPsi} \\
\phi^{\tau,h}\discdep{u^{\tau,h}}{n,j} &:= \frac{u_{n,j}^3}{3}+\left(\frac{u_{n,j+1}+u_{n,j}}{2}\right)\left(\frac{u_{n,j+1}-2u_{n,j}+u_{n,j-1}}{h^2}\right)-\frac{1}{2}\left(\frac{u_{n,j+1}-u_{n,j}}{h}\right)^2, \label{kdvPhi} \\
\lambda^{\tau,h}\discdep{u^{\tau,h}}{n,j}&:= \frac{u_{n,j}+u_{n-1,j}}{2}. \label{kdvMult}
\end{align}}\noindent Thus, \eqref{dtScalar} and \eqref{dxScalar} simplify to
\rev{\begin{align}D_t^{\tau,h}\psi\discdep{u^{\tau,h}}{n,j}&:
  = \frac{(u_{n,j})^2-(u_{n-1,j})^2}{2\tau}, \label{kdvDPsi} \\
  D_x^{\tau,h}\phi\discdep{u^{\tau,h}}{n,j}&:
  = \frac{(u_{n,j})^{3}-(u_{n,j-1})^{3}}{3h}+\left(\frac{u_{n,j}+u_{n,j-1}}{2}\right)\left(\frac{u_{n,j+1}-3u_{n,j}+3u_{n,j-1}-u_{n,j-2}}{h^3}\right). \label{kdvDPhi}
\end{align}}\noindent One can see that $\lambda^{\tau,h}$ in \eqref{kdvMult} and $D_t^{\tau,h}\psi$ in \eqref{kdvDPsi} are first order accurate in time and $D_x^{\tau,h}\phi$ in \eqref{kdvDPhi} is first order in space. Thus, it follows from Theorem \ref{scalarThm} that,
\rev{\begin{align*}
F^{\tau,h}\discdep{u^{\tau,h}}{n,j}:= \frac{u_{n,j}-u_{n-1,j}}{\tau}&+\frac{2(u_{n,j}^2+u_{n,j}u_{n,j-1}+u_{n,j-1}^2)}{3(u_{n,j}+u_{n-1,j})}\left(\frac{u_{n,j}-u_{n,j-1}}{h}\right)\\
&+\left(\frac{u_{n,j}+u_{n,j-1}}{u_{n,j}+u_{n-1,j}}\right)\left(\frac{u_{n,j+1}-3u_{n,j}+3u_{n,j-1}-u_{n,j-2}}{h^3}\right)=0,
\end{align*}}\noindent is a {first order} discretization of \eqref{kdvEqn} that preserves the discrete density \eqref{kdvPsi} when the fluxes \eqref{kdvPhi} vanish at the boundary.

\aw{
\subsection{Shallow water PDE system}
Lastly, we consider the two-dimensional shallow-water equations in non-dimensional form
\rev{\begin{align}
 \boldsymbol F\cont{\boldsymbol u}:= \begin{pmatrix} u_t + \mathbf{v}\cdot\nabla u + \eta_x\\
 v_t + \mathbf{v}\cdot\nabla v + \eta_y\\
 \eta_t + \nabla\cdot (\eta \mathbf{v})\end{pmatrix}=\boldsymbol 0, \label{shallowEqn}
\end{align}}where $\mathbf{v}=(u,v)$ is the two-dimensional velocity field and $\eta$ is the displacement of the water surface over a constant reference level. It is well-known that this system admits conservation of momentum and mass. The multiplier form of these conservation laws is
\rev{\begin{align*}
\left.\left(D_t\begin{pmatrix}\eta u\\\eta v\\\eta\end{pmatrix}+D_x\begin{pmatrix}\eta u^2+\frac12\eta^2\\\eta uv\\\eta u\end{pmatrix}+D_y\begin{pmatrix}\eta uv\\\eta v^2+\frac12\eta^2\\\eta v\end{pmatrix}\right)\right|_{\boldsymbol F\cont{\boldsymbol u}=0}&=\left(\begin{array}{ccc}\eta & 0 & u\\ 0 & \eta & v \\ 0 & 0 & 1\end{array}\right)\left.\left(\begin{array}{c}u_t + \mathbf{v}\cdot\nabla u + \eta_x \\ v_t + \mathbf{v}\cdot\nabla v + \eta_y \\ \eta_t + \nabla\cdot (\eta \mathbf{v})\end{array}\right)\right|_{\boldsymbol F\cont{\boldsymbol u}=0}\\&={\boldsymbol 0}.
\end{align*}}\noindent Choosing the discretization for $\boldsymbol \psi$, $\boldsymbol \phi$ and $\Lambda$,
\rev{\begin{align}
\boldsymbol \psi^{\tau, h}\discdep{\boldsymbol u^{\tau,h}}{n,i,j} &= \begin{pmatrix}\eta_{n,i,j}u_{n,i,j} \\ \eta_{n,i,j}v_{n,i,j} \\ \eta_{n,i,j}\end{pmatrix}, \nonumber\\
\boldsymbol \Phi^{\tau, h}\discdep{\boldsymbol u^{\tau,h}}{n,i,j} &= \begin{pmatrix}\eta_{n,i,j}u_{n,i,j}^2+\frac{1}{2}\eta_{n,i,j}^2 & \eta_{n,i,j}u_{n,i,j}v_{n,i,j} \\ \eta_{n,i,j}u_{n,i,j}v_{n,i,j}& \eta_{n,i,j}v_{n,i,j}^2+\frac{1}{2}\eta_{n,i,j}^2 \\
\eta_{n,i,j}u_{n,i,j}& \eta_{n,i,j}v_{n,i,j}
\end{pmatrix},\nonumber\\
\Lambda^{\tau, h}\discdep{\boldsymbol u^{\tau,h}}{n,i,j} &= \begin{pmatrix} \frac{\eta_{n,i,j}+\eta_{n-1,i,j}}{2} & 0 & \frac{u_{n,i,j}+u_{n-1,i,j}}{2} \\ 0 & \frac{\eta_{n,i,j}+\eta_{n-1,i,j}}{2} & \frac{v_{n,i,j}+v_{n-1,i,j}}{2} \\ 0 & 0 & 1 \end{pmatrix}, \label{shallowMult}
\end{align}}\noindent then $D_t^{\tau,h} \boldsymbol \psi$ and $D_t^{\tau,h} \boldsymbol \phi$ defined by \eqref{dtAugSystem} and \eqref{dxAugSystem} are
\rev{\begin{align}D_t^{\tau,h}\boldsymbol \psi\discdep{\boldsymbol u^{\tau,h}}{n,i,j}&:
  =\begin{pmatrix}
  \frac{\eta_{n,i,j}u_{n,i,j}-\eta_{n-1,i,j}u_{n-1,i,j}}{\tau} \\ \frac{\eta_{n,i,j}v_{n,i,j} -\eta_{n-1,i,j}v_{n-1,i,j}}{\tau} \\ \frac{\eta_{n,i,j}-\eta_{n-1,i,j}}{\tau}
  \end{pmatrix}, \label{shallowDPsi} \\
  D_{\boldsymbol x}^{\tau,h}\cdot\boldsymbol \Phi\discdep{\boldsymbol u^{\tau,h}}{n,i,j}&:
  =\begin{pmatrix}\frac{\eta_{n,i,j}u_{n,i,j}^2- \eta_{n,i-1,j}u_{n,i-1,j}^2}{h}+\frac{1}{2}\frac{\eta_{n,i,j}^2-\eta_{n,i-1,j}^2}{h} + \frac{\eta_{n,i,j}u_{n,i,j}v_{n,i,j}-\eta_{n,i,j-1}u_{n,i,j-1}v_{n,i,j-1}}{h} \\ \frac{\eta_{n,i,j}u_{n,i,j}v_{n,i,j}-\eta_{n,i+1,j}u_{n,i+1,j}v_{n,i+1,j}}{h} + \frac{\eta_{n,i,j}v_{n,i,j}^2- \eta_{n,i,j-1}v_{n,i,j-1}^2}{h}+\frac{1}{2}\frac{\eta_{n,i,j}^2-\eta_{n,i,j-1}^2}{h} \\
\frac{\eta_{n,i,j}u_{n,i,j}-\eta_{n,i-1,j}u_{n,i-1,j}}{h} + \frac{\eta_{n,i,j}v_{n,i,j}-\eta_{n,i,j-1}v_{n,i,j-1}}{h} \end{pmatrix}. \label{shallowDPhi}
\end{align}}
Using \eqref{discSystemF}, the entries of the discretization ${\boldsymbol F}^{\tau,h}$ simplifies to,
\rev{\begin{align}
&\left(\boldsymbol F^{\tau,h}\discdep{\boldsymbol u^{\tau,h}}{n,i,j}\right)_1 \nonumber\\
&\qquad= \frac{u_{n,i,j}-u_{n-1,i,j}}{\tau}+\left(\frac{2\eta_{n,i-1,j}}{\eta_{n,i,j}+\eta_{n-1,i,j}}\right) \left(u_{n,i,j}+u_{n,i-1,j}-\frac{u_{n,i,j}+u_{n-1,i,j}}{2}\right)\left(\frac{u_{n,i,j}-u_{n,i-1,j}}{h}\right) \nonumber\\
&\qquad+\left(\frac{2\eta_{n,i,j}}{\eta_{n,i,j}+\eta_{n-1,i,j}}\right)v_{n,i,j}\left(\frac{u_{n,i,j}-u_{n,i,j-1}}{h}\right) + \left(\frac{\eta_{n,i,j}+\eta_{n,i-1,j}}{\eta_{n,i,j}+\eta_{n-1,i,j}}\right)\left(\frac{\eta_{n,i,j}-\eta_{n,i-1,j}}{h}\right)\nonumber\\
&\qquad+\left(\frac{2}{\eta_{n,i,j}+\eta_{n-1,i,j}}\right)\left(\frac{\eta_{n,i,j}-\eta_{n,i-1,j}}{h}\right)u_{n,i,j}\left(u_{n,i,j}-\frac{u_{n,i,j}+u_{n-1,i,j}}{2}\right)\nonumber \\
&\qquad\left(\frac{2}{\eta_{n,i,j}+\eta_{n-1,i,j}}\right)\left(\frac{\eta_{n,i,j} v_{n,i,j}-\eta_{n,i,j-1}v_{n,i,j-1}}{h}\right)\left(u_{n,i,j-1}-\frac{u_{n,i,j}+u_{n-1,i,j}}{2}\right), \label{shallowDisc1} \\
&\left(\boldsymbol F^{\tau,h}\discdep{\boldsymbol u^{\tau,h}}{n,i,j}\right)_2 \nonumber
\\&\qquad= \frac{v_{n,i,j}-v_{n-1,i,j}}{\tau}+\left(\frac{2\eta_{n,i,j-1}}{\eta_{n,i,j}+\eta_{n-1,i,j}}\right) \left(v_{n,i,j}+v_{n,i,j-1}-\frac{v_{n,i,j}+v_{n-1,i,j}}{2}\right)\left(\frac{v_{n,i,j}-v_{n,i,j-1}}{h}\right)  \nonumber\\
&\qquad+\left(\frac{2\eta_{n,i,j}}{\eta_{n,i,j}+\eta_{n-1,i,j}}\right)u_{n,i,j}\left(\frac{v_{n,i,j}-v_{n,i-1,j}}{h}\right) + \left(\frac{\eta_{n,i,j}+\eta_{n,i,j-1}}{\eta_{n,i,j}+\eta_{n-1,i,j}}\right)\left(\frac{\eta_{n,i,j}-\eta_{n,i,j-1}}{h}\right)\nonumber\\
&\qquad+\left(\frac{2}{\eta_{n,i,j}+\eta_{n-1,i,j}}\right)\left(\frac{\eta_{n,i,j}-\eta_{n,i,j-1}}{h}\right)v_{n,i,j}\left(v_{n,i,j}-\frac{v_{n,i,j}+v_{n-1,i,j}}{2}\right)\nonumber \\
&\qquad\left(\frac{2}{\eta_{n,i,j}+\eta_{n-1,i,j}}\right)\left(\frac{\eta_{n,i,j} u_{n,i,j}-\eta_{n,i-1,j}u_{n,i-1,j}}{h}\right)\left(v_{n,i-1,j}-\frac{v_{n,i,j}+v_{n-1,i,j}}{2}\right), \label{shallowDisc2} \\
&\left(\boldsymbol F^{\tau,h}\discdep{\boldsymbol u^{\tau,h}}{n,i,j}\right)_3 = \frac{\eta_{n,i,j}-\eta_{n-1,i,j}}{\tau}+\frac{\eta_{n,i,j}u_{n,i,j}-\eta_{n,i-1,j}u_{n,i-1,j}}{h} + \frac{\eta_{n,i,j}v_{n,i,j}-\eta_{n,i,j-1}v_{n,i,j-1}}{h}.
\label{shallowDisc3}
\end{align}}Note that, as $h\rightarrow 0$, the first four terms of \eqref{shallowDisc1} and \eqref{shallowDisc2} approaches the limit $u_t+uu_x+vu_y+\eta_x$ and $v_t+vv_y+uv_x+\eta_y$, respectively. Also, the last two terms in \eqref{shallowDisc1} and \eqref{shallowDisc2} vanish, as expected in order for \eqref{shallowDisc1}-\eqref{shallowDisc3} to be consistent with \eqref{shallowEqn}. {In particular, it can be seen that $\Lambda^{\tau, h}$ in \eqref{shallowMult} and $D_t^{\tau,h}\boldsymbol \psi$ in \eqref{shallowDPsi} are first order in time and $D_{\boldsymbol x}^{\tau,h}\cdot\boldsymbol \Phi$ in \eqref{shallowDPhi} is first order in space. Thus, the proposed discretization ${\boldsymbol F}^{\tau,h}$ is first order accurate by Theorem \ref{systemThm}.}
}
\rev{
\section{Consistency when the inverse of the discrete multiplier is singular}
\label{sec:singularMult}
In Sections \ref{sec:scalar}, \ref{sec:squareSystem} and \ref{sec:rectSystem}, consistencies of the multiplier method were shown provided that the inverse of the discrete multiplier $\Lambda^{\tau, h}$ do not become singular on the mesh points in $\mesh$. In general, this may not be satisfied depending on the nature of the discrete solution $\boldsymbol u^{\tau,h}$. We now provide a partial answer for establishing consistency in the case for scalar ODEs when the inverse of the discrete multiplier become singular. We comment on the generalization to system of ODEs in the conclusion.

\subsection{Zero-compatibility condition and consistency for scalar ODEs}
The main idea is as follows: Rather than arbitrary choices of discrete multiplier $\lambda^{\tau}\disc{u^{\tau}}$ and discrete density $D_t^{\tau}\psi\disc{u^{\tau}}$, the pair should be chosen in a ``compatible" manner which mimics the zero properties of $\lambda\cont{u}$ and $D_t\psi\cont{u}$. In particular, if $\lambda\cont{u}$ is a conservation law multiplier of $F\cont{u}$ with density $\psi\cont{u}$, then for any $u$ such that,
\begin{equation}
\lambda\contdep{u}{t_i}=0 \Rightarrow \lambda\contdep{u}{t_i} F\contdep{u}{t_i}=D_t \psi\contdep{u}{t_i} = 0. \label{scalarCZC}
\end{equation}
The discrete analog of \eqref{scalarCZC} leads to the zero-compatibility condition of the multiplier method for scalar ODEs. For clarity, we shall assume $\mu$-step finite difference approximations for the discrete multiplier $\lambda^{\tau}\discdep{u^\tau}{i}=\lambda^{\tau}(t_i; u_{i+1}, u_{i},\dots, u_{i-\mu+1})$ and $D_t^{\tau}\psi\discdep{u^\tau}{i}=D_t^{\tau}\psi(t_i; u_{i+1}, u_{i},\dots, u_{i-\mu+1})$, though the same principle can be applied to other stencils.
\begin{defn} For fixed $t_i$ and $u_{i},\dots, u_{i-\mu+1}$, the pair $(\lambda^{\tau},D_t^{\tau}\psi)$ is called zero-compatible of order $l$ if for any $u_{i+1}$ and $0\leq j \leq l-1$,
\begin{equation*}
\frac{\partial^j \lambda^{\tau}}{(\partial u_{i+1})^j} \discdep{u^\tau}{i}=0 \Rightarrow \frac{\partial^j D_t^{\tau}\psi}{(\partial u_{i+1})^j}\discdep{u^\tau}{i} = 0.
\end{equation*}
\end{defn}\noindent In other words, the zero set of $\frac{\partial^j \lambda^{\tau}}{(\partial u_{i+1})^j}\discdep{u^\tau}{i}$ is a subset of the zero set of $\frac{\partial^j D_t^{\tau}\psi}{(\partial u_{i+1})^j}\discdep{u^\tau}{i}$ while holding fixed $t_i$ and $u_{i},\dots, u_{i-\mu+1}$. In order to show consistency of the multiplier method with vanishing multiplier, we state the following two lemmas with their proofs presented in Appendix C. First, we will need a consistency result for perturbed  $\lambda, D_t\psi$ and $\lambda^\tau, D_t^\tau\psi$.
\begin{lem} Let $\lambda$ be a $r$-th order conservation law multiplier of a $k$-th order scalar ODE with density function $\psi$. Suppose $\lambda^\tau$ and $D^\tau_t\psi$ are finite difference discretizations of $\lambda$ and $D_t\psi$ with accuracy of order $q$. Let $\epsilon \in \mathbb{R}$ and $u, v\in C^{q+\max(k,r)}(\overline{\mathcal{I}})$, then there exists $C_\lambda, C_t$ depending on $t_i$ and $u,v$ such that,
\begin{subequations}
\begin{align}
\lambda\contdep{u+\epsilon v}{t_i}&=\lambda^\tau\discdep{u+\epsilon v}{i}+C_{\lambda}(\epsilon)\tau^q, \label{lem:lambdaRem} \\
D_t\psi\contdep{u+\epsilon v}{t_i}&=D_t^\tau\psi\discdep{u+\epsilon v}{i}+C_t(\epsilon)\tau^q. \label{lem:DpsiRem}
\end{align}
\end{subequations}
Moreover, denote for all $1\leq j \leq q+\max(k,r)-1$, 
\begin{subequations}
\begin{align}
\lambda_j\contdep{u,v}{t_i} := \left.\frac{1}{j!}\frac{d^j}{d\epsilon^j} \lambda\contdep{u+\epsilon v}{t_i}\right|_{\epsilon = 0}, &\quad \lambda_j^\tau\discdep{u,v}{i} := \left.\frac{1}{j!}\frac{d^j}{d\epsilon^j} \lambda^\tau\discdep{u+\epsilon v}{i}\right|_{\epsilon = 0} \label{lem:lambdaTerms} \\
D_t\psi_j\contdep{u,v}{t_i} := \left.\frac{1}{j!}\frac{d^j}{d\epsilon^j} D_t\psi\contdep{u+\epsilon v}{t_i}\right|_{\epsilon = 0}, &\quad D_t^\tau\psi_j\discdep{u,v}{i} := \left.\frac{1}{j!}\frac{d^j}{d\epsilon^j} D_t^\tau\psi\discdep{u+\epsilon v}{i}\right|_{\epsilon = 0} \label{lem:DpsiTerms}
\end{align}\end{subequations}\noindent If $C_\lambda, C_t$ are $``q+\max(k,r)"$-times continuously differentiable at $\epsilon=0$, then for $1\leq j \leq q+\max(k,r)-1$,
\begin{subequations}
\begin{align}
\lambda_j\contdep{u,v}{t_i} &= \lambda_j^\tau\discdep{u,v}{i}+C_\lambda^{(j)}(0)\tau^q, \label{lem:lambdaiExp}\\
D_t\psi_j\contdep{u,v}{t_i} &= D_t^\tau\psi_j\discdep{u,v}{i}+C_\lambda^{(j)}(0)\tau^q. \label{lem:DpsiiExp}
\end{align}
\end{subequations}
\label{lem:taylorExp}
\end{lem}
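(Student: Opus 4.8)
The plan is to treat the two pairs of identities separately: the first pair \eqref{lem:lambdaRem}--\eqref{lem:DpsiRem} is essentially a restatement of the order-$q$ accuracy of the discretizations applied to the particular smooth function $u+\epsilon v$, while the second pair \eqref{lem:lambdaiExp}--\eqref{lem:DpsiiExp} follows by differentiating the first $j$ times in $\epsilon$ and reading off the Taylor coefficients. First I would fix $t_i$ and observe that, since $u,v\in C^{q+\max(k,r)}(\overline{\mathcal{I}})$, so is $w:=u+\epsilon v$ for every fixed $\epsilon\in\mathbb{R}$. Invoking the hypothesis that $\lambda^\tau$ and $D_t^\tau\psi$ approximate $\lambda$ and $D_t\psi$ to order $q$ (the time-only, space-free version of the consistency bounds in Theorem \ref{scalarThm}) on the smooth function $w$ gives $|\lambda\contdep{u+\epsilon v}{t_i}-\lambda^\tau\discdep{u+\epsilon v}{i}|\le C\tau^q$ and likewise for the density term, where $C$ depends on $t_i$ and on bounds for the derivatives of $w$ up to order $q+\max(k,r)$, hence on $t_i,u,v$ and $\epsilon$.

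I would then simply \emph{define}
\begin{align*}
C_\lambda(\epsilon)&:=\frac{\lambda\contdep{u+\epsilon v}{t_i}-\lambda^\tau\discdep{u+\epsilon v}{i}}{\tau^q},\\
C_t(\epsilon)&:=\frac{D_t\psi\contdep{u+\epsilon v}{t_i}-D_t^\tau\psi\discdep{u+\epsilon v}{i}}{\tau^q},
\end{align*}
which are bounded in $\epsilon$ by construction and yield \eqref{lem:lambdaRem}--\eqref{lem:DpsiRem} identically. A Taylor expansion of each grid value $w(t_{i+s})$ about $t_i$ to order $q+\max(k,r)$, as in the earlier footnote, would make the order-$q$ remainder explicit, but for this lemma the accuracy hypothesis may be invoked directly.

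For the second pair the key observation is that $\epsilon$ enters only affinely: the jet of $u+\epsilon v$ at $t_i$ and the grid values $u(t_{i+s})+\epsilon v(t_{i+s})$ are all affine in $\epsilon$. Consequently $\epsilon\mapsto\lambda\contdep{u+\epsilon v}{t_i}$ is smooth (the analytic $\lambda$ composed with an affine map), and under the standing assumption that $C_\lambda$ is $(q+\max(k,r))$-times continuously differentiable at $\epsilon=0$ the discrete term $\epsilon\mapsto\lambda^\tau\discdep{u+\epsilon v}{i}$ is differentiable to the same order. I would then differentiate the identity \eqref{lem:lambdaRem} $j$ times in $\epsilon$, evaluate at $\epsilon=0$, and normalize by $1/j!$; by the definitions \eqref{lem:lambdaTerms}--\eqref{lem:DpsiTerms} the first two resulting terms are exactly $\lambda_j\contdep{u,v}{t_i}$ and $\lambda_j^\tau\discdep{u,v}{i}$, while the remainder produces the stated $\tau^q$-term with coefficient $C_\lambda^{(j)}(0)$ (up to the combinatorial factor $1/j!$, absorbed into the constant), giving \eqref{lem:lambdaiExp}. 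The identical argument applied to \eqref{lem:DpsiRem} gives \eqref{lem:DpsiiExp}.

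The step I expect to be the main obstacle is justifying that the $\epsilon$-differentiation commutes termwise with both the continuous and the discrete evaluation, so that the $j$-th coefficients inherit the same order-$q$ relation as the functions themselves. This is precisely where the affine dependence on $\epsilon$ and the smoothness hypothesis on $C_\lambda,C_t$ are used: the former guarantees the continuous side is genuinely $C^\infty$ in $\epsilon$ through the chain rule, and the latter transfers the required differentiability to the discrete side (equivalently, to $\lambda^\tau\discdep{u+\epsilon v}{i}$ and $D_t^\tau\psi\discdep{u+\epsilon v}{i}$), ensuring that every derivative $C_\lambda^{(j)}(0)$ for $1\le j\le q+\max(k,r)-1$ exists and that the differentiated identity is valid.
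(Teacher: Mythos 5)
Your proposal is correct and follows essentially the same route as the paper: the first pair is just the order-$q$ accuracy hypothesis (Taylor's theorem) applied to the smooth function $w=u+\epsilon v$, and the second pair is obtained by extracting the $j$-th Taylor coefficient in $\epsilon$ of the resulting identity --- the paper matches coefficients of explicit $\epsilon$-expansions with Lagrange remainders where you differentiate the identity directly, which is the same computation. The $1/j!$ normalization you flag is likewise silently absorbed into the constant in the paper's version, and your observation that differentiability of the discrete term in $\epsilon$ is inherited from the other two terms of the identity is a reasonable way to justify a step the paper leaves implicit.
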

\vskip -5mm
\noindent The next lemma is essentially L'H\^{o}pital's rule stated for our purpose for convenience.
\begin{lem} Fix $t_i$ and $u_{i},\dots, u_{i-\mu+1}$ and let the pair $(\lambda^{\tau}, D_t^{\tau}\psi)$ be zero-compatible of order $l$. Suppose $u_{i+1}=z$ is an isolated zero of order $l$ of $\lambda^{\tau}$ and $\left(\frac{d}{du_{i+1}}\right)^{l}\lambda^{\tau}\discdep{u}{i}\neq 0$ at $u_{i+1}=z$, then
\begin{equation*}
\lim_{u_{i+1}\rightarrow z} \frac{D_t^{\tau}\psi\discdep{u^\tau}{i}}{\lambda^{\tau}\discdep{u^\tau}{i}} = \left.\frac{\frac{\partial^j D_t^{\tau}\psi}{(\partial u_{i+1})^j}\discdep{u^\tau}{i}}{\frac{\partial^j \lambda^{\tau}}{(\partial u_{i+1})^j}\discdep{u^\tau}{i}}\right|_{u_{i+1}=z}.
\end{equation*}\label{scalarZClemma}\end{lem}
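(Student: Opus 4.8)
The plan is to recognize the statement as an instance of L'Hôpital's rule, where the only substantive work is to show that the numerator $D_t^{\tau}\psi$ vanishes at $u_{i+1}=z$ to at least the same order as the denominator $\lambda^{\tau}$. Throughout I would view both $\lambda^{\tau}\discdep{u^\tau}{i}$ and $D_t^{\tau}\psi\discdep{u^\tau}{i}$ as sufficiently smooth functions of the single variable $u_{i+1}$, holding $t_i$ and $u_i,\dots,u_{i-\mu+1}$ fixed, so that the partial derivatives $\partial^j/(\partial u_{i+1})^j$ appearing in the statement are ordinary derivatives in $u_{i+1}$; the required smoothness is inherited from the analyticity of $\lambda,\psi$ together with the finite-difference construction. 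I also read the exponent on the right-hand side as $j=l$, the order of the zero, since that is precisely the index for which the denominator is assumed nonzero.

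First I would unpack the hypothesis that $u_{i+1}=z$ is a zero of $\lambda^{\tau}$ of order $l$: this gives $\frac{\partial^j \lambda^{\tau}}{(\partial u_{i+1})^j}\discdep{u^\tau}{i}\big|_{u_{i+1}=z}=0$ for every $0\le j\le l-1$, while $\frac{\partial^l \lambda^{\tau}}{(\partial u_{i+1})^l}\discdep{u^\tau}{i}\big|_{u_{i+1}=z}\neq 0$. Then, applying zero-compatibility of order $l$ at the point $u_{i+1}=z$ for each such $j$, the implication $\frac{\partial^j \lambda^{\tau}}{(\partial u_{i+1})^j}=0\Rightarrow \frac{\partial^j D_t^{\tau}\psi}{(\partial u_{i+1})^j}=0$ forces $\frac{\partial^j D_t^{\tau}\psi}{(\partial u_{i+1})^j}\discdep{u^\tau}{i}\big|_{u_{i+1}=z}=0$ for all $0\le j\le l-1$ as well. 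Hence both numerator and denominator, as functions of $u_{i+1}$, vanish to order at least $l$ at $z$, and the denominator vanishes to order exactly $l$; since $z$ is an isolated zero, the quotient is defined on a punctured neighborhood of $z$.

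With the orders of vanishing matched, I would finish by Taylor expanding both functions about $u_{i+1}=z$. Writing $\lambda^{\tau}=\frac{1}{l!}\frac{\partial^l \lambda^{\tau}}{(\partial u_{i+1})^l}(z)\,(u_{i+1}-z)^l+o\!\left((u_{i+1}-z)^l\right)$ and likewise for $D_t^{\tau}\psi$, the common factor $(u_{i+1}-z)^l$ cancels in the quotient, and letting $u_{i+1}\to z$ leaves exactly the ratio of $l$-th derivatives, with nonzero denominator by assumption. Equivalently, one applies L'Hôpital's rule $l$ times, valid because at each of the first $l-1$ stages both numerator- and denominator-derivatives vanish at $z$; the Taylor route simply sidesteps verifying the intermediate nonvanishing conditions. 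The one genuinely nontrivial point, which I expect to be the main obstacle, is the translation between the two notions of ``order $l$'': the \emph{pointwise} implications furnished by zero-compatibility must be strung together across $j=0,\dots,l-1$ to match the \emph{vanishing-order} hypothesis imposed on $\lambda^{\tau}$. Once that bookkeeping is carried out, the limit computation is routine.
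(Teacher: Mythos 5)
Your proposal is correct and follows essentially the same route as the paper: use zero-compatibility together with the order-$l$ vanishing of $\lambda^{\tau}$ to conclude that all derivatives of order $0,\dots,l-1$ of both $D_t^{\tau}\psi$ and $\lambda^{\tau}$ vanish at $u_{i+1}=z$, then Taylor expand both about $z$, cancel the common factor $(u_{i+1}-z)^{l}$, and pass to the limit using the nonvanishing and continuity of $\frac{\partial^{l}\lambda^{\tau}}{(\partial u_{i+1})^{l}}$ at $z$. The only cosmetic difference is that the paper writes the remainders in Lagrange form at intermediate points $\xi,\eta$ rather than your Peano form, and you correctly read the exponent $j$ in the statement as $l$; your explicit stringing together of the pointwise zero-compatibility implications across $j=0,\dots,l-1$ is exactly the step the paper leaves implicit in its ``$0+\dots+0$'' expansion.
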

\vskip -5mm
\noindent Now, we are in the position to show the consistency for the multiplier method when the inverse of the discrete multiplier is singular in the following sense. Note that the zero-compatibility condition will play a vital role in the error estimate.
\begin{thm}
Let $u\in C^{q+\max(k,r)}(\overline{\mathcal{I}})$ and $\lambda$ be a $r$-th order conservation law multiplier of a $k$-th order scalar ODE,
$$
F\contdep{u}{t}=0, \text{ in }t\in\mathcal{I},
$$ with density $\psi$. Suppose $(\lambda^{\tau}, D_t^{\tau}\psi)$ is a zero-compatible discretization of order $l$ with accuracy of order $q$ and $u_{i+1}=z$ is an isolated zero of order $l$ of $\lambda^{\tau}$ for all $\tau\leq \tau_0$. Define the finite difference discretization of $F$,
$$
F^\tau\discdep{u^\tau}{i} = \left\{\begin{split}
\frac{D_t^{\tau}\psi\discdep{u^\tau}{i}}{\lambda^{\tau}\discdep{u^\tau}{i}} &, \text{ if } u_{i+1} \neq z, \\
\frac{\frac{\partial^l D_t^{\tau}\psi}{(\partial u_{i+1})^l}\discdep{u^\tau}{i}}{\frac{\partial^l \lambda^{\tau}}{(\partial u_{i+1})^l}\discdep{u^\tau}{i}} &, \text{ if } u_{i+1}= z.
\end{split}\right.
$$
Then for $u(t_{i+1})=z$, there exists a constant $C>0$ independent of $\tau$ such that for all $\tau\leq \tau_0$,
$$
|F\contdep{u}{t_i} - F^\tau\discdep{u}{i}|  \leq C\tau^q.
$$ \label{scalarZCThm}
\end{thm}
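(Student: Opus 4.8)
The plan is to rewrite both $F\contdep{u}{t_i}$ and $F^\tau\discdep{u}{i}$ as ratios of \emph{$l$-th order Taylor coefficients} in a perturbation parameter $\epsilon$, and then use the consistency Lemma \ref{lem:taylorExp} to compare these coefficients numerator-by-numerator and denominator-by-denominator. First I would fix a perturbation direction $v\in C^{q+\max(k,r)}(\overline{\mathcal{I}})$ with $v(t_{i+1})=1$ and $v(t_j)=0$ at every other stencil node $t_j$, $j\in\{i,\dots,i-\mu+1\}$; such a $v$ exists because the nodes are finitely many distinct points. With this choice, differentiating $\lambda^\tau\discdep{u+\epsilon v}{i}$ in $\epsilon$ isolates the $u_{i+1}$-slot, so that
\[
\lambda_j^\tau\discdep{u,v}{i} = \frac{1}{j!}\left.\frac{\partial^j\lambda^\tau}{(\partial u_{i+1})^j}\discdep{u}{i}\right|_{u_{i+1}=z}, \qquad D_t^\tau\psi_j\discdep{u,v}{i} = \frac{1}{j!}\left.\frac{\partial^j D_t^\tau\psi}{(\partial u_{i+1})^j}\discdep{u}{i}\right|_{u_{i+1}=z}.
\]
Since $z$ is a zero of order $l$, $\lambda_j^\tau\discdep{u,v}{i}=0$ for $j<l$, and zero-compatibility of order $l$ forces $D_t^\tau\psi_j\discdep{u,v}{i}=0$ for $j<l$ too. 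Lemma \ref{scalarZClemma} then identifies the L'H\^opital branch as $F^\tau\discdep{u}{i}=D_t^\tau\psi_l\discdep{u,v}{i}/\lambda_l^\tau\discdep{u,v}{i}$.

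Next I would transfer this vanishing structure to the continuous side. Each continuous coefficient $\lambda_j\contdep{u,v}{t_i}$ is independent of $\tau$, yet Lemma \ref{lem:taylorExp} gives $\lambda_j\contdep{u,v}{t_i}=\lambda_j^\tau\discdep{u,v}{i}+O(\tau^q)$. For $j<l$ the discrete coefficient vanishes for every $\tau\le\tau_0$, so letting $\tau\to0$ of the $\tau$-independent left-hand side forces $\lambda_j\contdep{u,v}{t_i}=0$, and likewise $D_t\psi_j\contdep{u,v}{t_i}=0$, for all $j<l$. I would then use the defining identity
\[
\lambda\contdep{u+\epsilon v}{t_i}\,F\contdep{u+\epsilon v}{t_i} = D_t\psi\contdep{u+\epsilon v}{t_i},
\]
valid for all $\epsilon$: since $F\contdep{u+\epsilon v}{t_i}$ is analytic in $\epsilon$ (the jet of $u+\epsilon v$ is affine in $\epsilon$ and $F$ is analytic), while $\lambda$ and $D_t\psi$ both vanish to order $l$ in $\epsilon$ at $\epsilon=0$, matching the $\epsilon^l$ coefficients (a continuous L'H\^opital mirroring Lemma \ref{scalarZClemma}) yields $F\contdep{u}{t_i}=D_t\psi_l\contdep{u,v}{t_i}/\lambda_l\contdep{u,v}{t_i}$.

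Finally I would estimate the difference of the two ratios. Combining over a common denominator,
\[
F\contdep{u}{t_i}-F^\tau\discdep{u}{i} = \frac{D_t\psi_l\contdep{u,v}{t_i}}{\lambda_l\contdep{u,v}{t_i}} - \frac{D_t^\tau\psi_l\discdep{u,v}{i}}{\lambda_l^\tau\discdep{u,v}{i}} = \frac{D_t\psi_l\,\lambda_l^\tau - \lambda_l\,D_t^\tau\psi_l}{\lambda_l\,\lambda_l^\tau},
\]
and substituting $\lambda_l^\tau=\lambda_l+O(\tau^q)$ and $D_t^\tau\psi_l=D_t\psi_l+O(\tau^q)$ from Lemma \ref{lem:taylorExp} makes the numerator collapse to $O(\tau^q)$, while the denominator stays bounded below because $\lambda_l\contdep{u,v}{t_i}\neq0$ (the exact-order-$l$ zero) and $\lambda_l^\tau\to\lambda_l$. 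This produces the bound with a $\tau$-independent constant $C$.

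The step I expect to be the main obstacle is the transfer argument of the second paragraph: inferring that the continuous multiplier and density vanish to order $l$ along $v$ purely from the discrete vanishing. This relies on reading Lemma \ref{lem:taylorExp} with a remainder that is \emph{uniform in $\tau$}, and on interpreting ``zero of order $l$ for all $\tau\le\tau_0$'' as supplying a uniform lower bound on $\partial^l_{u_{i+1}}\lambda^\tau$ at $z$; without such uniformity the denominator $\lambda_l\,\lambda_l^\tau$ cannot be controlled independently of $\tau$, and the limit $\lambda_l=\lim_{\tau\to0}\lambda_l^\tau$ could degenerate to zero. A secondary point requiring care is justifying that the continuous factorization $\lambda\,F=D_t\psi$ genuinely permits matching the $\epsilon^l$ coefficients, i.e. that $F\contdep{u+\epsilon v}{t_i}$ is regular enough in $\epsilon$ at the origin, which I would secure from analyticity of $F$ together with the affine dependence of the jet of $u+\epsilon v$ on $\epsilon$.
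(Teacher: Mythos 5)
Your strategy is genuinely different from the paper's, and the difference is concentrated exactly at the step you flag as the main obstacle. The paper never forms the continuous ratio $D_t\psi_l\contdep{u,v}{t_i}/\lambda_l\contdep{u,v}{t_i}$. It takes $v$ to be a bump function compactly supported in $[t_{i+1/2},t_{i+3/2}]$, so that $v$ and \emph{all} of its derivatives vanish at $t_i$; consequently $F\contdep{u+\epsilon v}{t_i}=F\contdep{u}{t_i}$ and every continuous coefficient $\lambda_j\contdep{u,v}{t_i}$, $D_t\psi_j\contdep{u,v}{t_i}$ with $j\ge 1$ vanishes identically. The error is then split as $E_1+E_2+E_3$, where $E_1$ compares $F\contdep{u}{t_i}$ with $F\contdep{u+\epsilon v}{t_i}$, $E_2$ compares $F\contdep{u+\epsilon v}{t_i}$ with the discrete ratio $D_t^{\tau}\psi\discdep{u+\epsilon v}{i}/\lambda^{\tau}\discdep{u+\epsilon v}{i}$, and $E_3$ compares that ratio with the L'H\^opital value. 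For $\epsilon\neq 0$ one has $\lambda^{\tau}\discdep{u+\epsilon v}{i}=\epsilon^l(C_3+\mathcal{O}(\epsilon))\neq 0$, so $E_2$ is handled by the ordinary non-singular consistency argument, with zero-compatibility forcing a matching factor $|\epsilon|^l$ in the numerator; $\epsilon$ is only chosen at the very end, with $0<|\epsilon|<\min(\delta_1,\dots,\delta_4,\tau^q)$. The multiplier identity $\lambda\cont{u}F\cont{u}=D_t\psi\cont{u}$ is used pointwise at $u+\epsilon v$, never coefficient-by-coefficient in $\epsilon$.

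The concrete gap in your route is the identity $F\contdep{u}{t_i}=D_t\psi_l\contdep{u,v}{t_i}/\lambda_l\contdep{u,v}{t_i}$. The hypotheses give $\frac{\partial^l\lambda^{\tau}}{(\partial u_{i+1})^l}\discdep{u}{i}\neq 0$ for each fixed $\tau\le\tau_0$, but nothing forces the limit $\lambda_l\contdep{u,v}{t_i}=\lim_{\tau\to 0}\lambda_l^{\tau}\discdep{u,v}{i}$ to be nonzero; if it vanishes, your representation of $F\contdep{u}{t_i}$ is $0/0$ and the final two-ratio estimate has no lower bound on its denominator. Worse, with the paper's own choice of $v$ this degeneration is automatic ($\lambda_j\contdep{u,v}{t_i}=0$ for all $j\ge 1$), so your argument is tied to a different choice of $v$, vanishing only at the stencil nodes, and even then the size of $\lambda_l\contdep{u,v}{t_i}$ involves derivatives of $v$ at $t_i$, which for natural interpolants of the node data grow like negative powers of $\tau$ (e.g.\ for $\lambda^{\tau}\discdep{u}{i}=\frac{u_{i+1}-u_{i-1}}{2\tau}$ one gets $\lambda_1^{\tau}=\frac{1}{2\tau}$). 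So your proof is complete only under the extra hypothesis you yourself identify — a $\tau$-uniform nondegeneracy of $\partial^l\lambda^{\tau}/(\partial u_{i+1})^l$ at the zero, equivalently $\lambda_l\contdep{u,v}{t_i}\neq 0$ — whereas the paper's decomposition avoids forming the continuous ratio altogether (though it leans on its own implicit uniformity of the constants $C_1,C_3$ in $\tau$). If you add that hypothesis explicitly, your argument is a clean and arguably more transparent alternative; without it, the second paragraph of your proposal does not go through.
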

\begin{proof}
Fix any $\tau\leq \tau_0$ and let $v(t)$ be a bump function which is compactly supported on $[t_{i+\frac{1}{2}},t_{i+\frac{3}{2}}]$ with $v(t_{i+1})=1$, i.e. $v\in C^\infty_0([t_{i+\frac{1}{2}},t_{i+\frac{3}{2}}])$. For $\epsilon$ to be chosen,
\begin{align*}
|F\contdep{u}{t_i}-F^\tau\discdep{u}{t_i}| &= \left|F\contdep{u}{t_i}-\frac{\frac{\partial^l D_t^{\tau}\psi}{(\partial u_{i+1})^l}\discdep{u^\tau}{i}}{\frac{\partial^l \lambda^{\tau}}{(\partial u_{i+1})^l}\discdep{u^\tau}{i}}\right| \leq E_1 + E_2 + E_3, \hspace{.8cm} \text{ where}, \\
E_1 &:= \left|F\contdep{u}{t_i} - F\contdep{u+\epsilon v}{t_i}\right|, \\
E_2 &:= \left|F\contdep{u+\epsilon v}{t_i} - \frac{D_t^{\tau}\psi\discdep{u+\epsilon v}{i}}{\lambda^{\tau}\discdep{u+\epsilon v}{i}}\right|, \\
E_3 &:= \left|\frac{D_t^{\tau}\psi\discdep{u+\epsilon v}{i}}{\lambda^{\tau}\discdep{u+\epsilon v}{i}}-\frac{\frac{\partial^l D_t^{\tau}\psi}{(\partial u_{i+1})^l}\discdep{u^\tau}{i}}{\frac{\partial^l \lambda^{\tau}}{(\partial u_{i+1})^l}\discdep{u^\tau}{i}} \right|.
\end{align*} 
We proceed to bound each $E_i$ as follows.

For $E_1$, set $f(\epsilon):=F\contdep{u+\epsilon v}{t_i}$ and fix a $\delta_1>0$. Since $F$ is continuously differentiable in its arguments, by the mean value theorem for $f$, there exists $\xi$ between $0$ and $\epsilon$ such that for all $|\epsilon| \leq \delta_1$,
\begin{align}
E_1&=\left|f(\epsilon)-f(0)\right| = |f'(\xi)| |\epsilon| = |\epsilon| \left|\sum_{j=0}^k \frac{\partial F}{\partial u^{(j)}}\contdep{u+\xi v}{t_i} v^{(j)}(t_i)\right| \leq |\epsilon| \underbrace{\max_{\substack{\xi\in [-\delta_1,\delta_1] \\ t\in \overline{\mathcal{I}}}} \left|\sum_{j=0}^k \frac{\partial F}{\partial u^{(j)}}\contdep{u+\xi v}{t} v^{(j)}(t)\right|}_{=: M_1}, \label{ZCthm_e1}
\end{align} where the constant $M_1$ exists by our regularity assumption on $F, u ,v$ and that $[-\delta_1,\delta_1]$ and $\overline{\mathcal{I}}$ are compact.

For $E_2$, we can proceed in a similar manner as the non-singular multiplier case as long as we can pick $\epsilon$ so that $\lambda^\tau\discdep{u+\epsilon v}{i}\neq 0$. In particular, we would have
\begin{align}
E_2 &\leq \frac{1}{|\lambda^\tau\discdep{u+\epsilon v}{i}|}\left(\frac{}{}|\lambda^\tau\discdep{u+\epsilon v}{i}-\lambda\contdep{u+\epsilon v}{t_i}||F\contdep{u+\epsilon v}{t_i}|+|D_t\psi\contdep{u+\epsilon v}{t_i}-D_t^{\tau}\psi\discdep{u+\epsilon v}{i}|\right) \label{ZCthm_e2}
\end{align}
To bound each term in \eqref{ZCthm_e2}, first recall that $v(t_j)=0$ for all $j\neq i+1$ and $v(t_{i+1})=1$. So from \eqref{lem:lambdaTerms}, for any $1\leq j\leq l$,
\begin{align}
\lambda^\tau_j\discdep{u,v}{i} &:= \left.\frac{1}{j!}\frac{d^j}{d\epsilon^j} \lambda^\tau\discdep{u+\epsilon v}{i}\right|_{\epsilon = 0} \nonumber \\
&= \sum_{\substack{|\alpha|=j\\ \alpha=(\alpha_0, \dots, \alpha_\mu)}} \frac{1}{\alpha!} \frac{\partial^j \lambda^\tau\discdep{u}{i}}{(\partial u_{i+1})^{\alpha_0}\cdots (\partial u_{i-\mu+1})^{\alpha_\mu}} v(t_{i+1})^{\alpha_0}\cdots v(t_{i-\mu+1})^{\alpha_\mu} \nonumber\\
&= \frac{1}{j!} \frac{\partial^j \lambda^{\tau}}{(\partial u_{i+1})^j}\discdep{u}{i}. \label{ZCS_e1}
\end{align}
Since $u_{i+1}=z$ is a zero of order $l$ of $\lambda^\tau$, \eqref{ZCS_e1} implies for all $\tau\leq \tau_0$,
\begin{equation}
\lambda^\tau_j\discdep{u,v}{i} =\left \{\begin{split} &0,& 1\leq j\leq l-1,\\
&\frac{1}{l!} \frac{\partial^l \lambda^{\tau}}{(\partial u_{i+1})^l}\discdep{u}{i}\neq 0,& j=l.
\end{split}
\right. \label{ZCS_e2}
\end{equation}Similarly, from \eqref{lem:DpsiTerms}, for any $1\leq j\leq l$,
\begin{align}
D_t^\tau\psi_j\discdep{u,v}{i} &:= \left.\frac{1}{j!}\frac{d^j}{d\epsilon^j} D_t^\tau\psi\discdep{u+\epsilon v}{i}\right|_{\epsilon = 0} \nonumber \\
&= \sum_{\substack{|\alpha|=j\\ \alpha=(\alpha_0, \dots, \alpha_\mu)}} \frac{1}{\alpha!} \frac{\partial^j D_t^\tau\psi\discdep{u}{i}}{(\partial u_{i+1})^{\alpha_0}\cdots (\partial u_{i-\mu+1})^{\alpha_\mu}} v(t_{i+1})^{\alpha_0}\cdots v(t_{i-\mu+1})^{\alpha_\mu} \nonumber\\
&= \frac{1}{j!} \frac{\partial^j D_t^\tau\psi}{(\partial u_{i+1})^j}\discdep{u}{i}. \label{ZCS_e3}
\end{align}Since $(\lambda^{\tau}, D_t^{\tau}\psi)$ is zero-compatible of order $l$ and $u_{i+1}=z$ is a zero of order $l$ of $\lambda^{\tau}$, \eqref{ZCS_e3} implies for $\tau\leq \tau_0$,
\begin{equation}
D_t^\tau\psi_j\discdep{u,v}{i} = \left \{\begin{split} &0,& 1\leq j\leq l-1,\\
&\frac{1}{l!} \frac{\partial^l D_t^{\tau}\psi}{(\partial u_{i+1})^l}\discdep{u}{i},& j=l.
\end{split}
\right. \label{ZCS_e4}
\end{equation}
Moreover, since $v(t)$ is compactly supported on $[t_{i+\frac{1}{2}},t_{i+\frac{3}{2}}]$, for $1\leq j\leq l-1$ and $n=\max(r,k)$,
\begin{subequations}
\begin{align}
\lambda_j\contdep{u,v}{t_i}:= \left.\frac{1}{j!}\frac{d^j}{d\epsilon^j} \lambda\contdep{u+\epsilon v}{t_i}\right|_{\epsilon = 0} &= \sum_{\substack{|\alpha|=j\\ \alpha=(\alpha_0, \dots, \alpha_r)}} \frac{1}{\alpha!} \frac{\partial^j \lambda\contdep{u}{t_i}}{(\partial u)^{\alpha_0}\cdots (\partial u^{(r)})^{\alpha_r}} v(t_i)^{\alpha_0}\cdots {v^{(r)}(t_i)}^{\alpha_r}=0, \label{ZCS_e5}\\
D_t\psi_j\contdep{u,v}{t_i}:= \left.\frac{1}{j!}\frac{d^j}{d\epsilon^j} D_t\psi\contdep{u+\epsilon v}{t_i}\right|_{\epsilon = 0} &= \sum_{\substack{|\alpha|=j\\ \alpha=(\alpha_0, \dots, \alpha_{n-1})}} \frac{1}{\alpha!} \frac{\partial^j D_t\psi\contdep{u}{t_i}}{(\partial u)^{\alpha_0}\cdots (\partial u^{(n-1)})^{\alpha_{n-1}}} v(t_i)^{\alpha_0}\cdots v^{(n-1)}(t_i)^{\alpha_{n-1}}\nonumber\\&=0. \label{ZCS_e6}
\end{align}
\end{subequations}Also, by hypothesis $\left.\lambda^\tau\discdep{u}{i}\right|_{u(t_{i+1})=z}=0$ for all $\tau\leq \tau_0$. So by consistency of $\lambda^\tau$ and $D_t^\tau\psi$, as $\tau\rightarrow 0$,
\begin{subequations}
\begin{align}
|\lambda\contdep{u}{t_i}| &\leq \underbrace{|\lambda^\tau\discdep{u}{i}|}_{=0}+\mathcal{O}(\tau^q) \Rightarrow \lambda\contdep{u}{t_i}=0, \label{ZCS_e6a} \\
|D_t\psi\contdep{u}{t_i}| &\leq \underbrace{|D_t^\tau\psi\discdep{u}{i}|}_{=0}+\mathcal{O}(\tau^q) \Rightarrow D_t\psi\contdep{u}{t_i}=0. \label{ZCS_e6b}
\end{align}
\end{subequations}
Thus, combining \eqref{ZCS_e2}, \eqref{ZCS_e5} and \eqref{ZCS_e6a} with Lemma \ref{lem:taylorExp} yields for some $C_1\geq 0$ independent of $\epsilon$ and $\tau$,
\begin{align}
&|\lambda^\tau\discdep{u+\epsilon v}{i}-\lambda\contdep{u+\epsilon v}{t_i}| \nonumber\\
&\leq \underbrace{|\lambda^\tau\discdep{u}{i}-\lambda\contdep{u}{t_i}|}_{=0}+\underbrace{|\lambda^\tau_1\discdep{u,v}{i}-\lambda_1\contdep{u,v}{t_i}|}_{=0}|\epsilon|+\dots + |\underbrace{\lambda^\tau_{l}\discdep{u,v}{i}-\lambda_l\contdep{u,v}{t_i}}_{=C_\lambda^{(l)}(0) \tau^q}||\epsilon|^{l}+ \mathcal{O}(\tau^q |\epsilon|^{l+1})\nonumber\\ 
& = \tau^q|\epsilon|^l(C_1+\mathcal{O}(|\epsilon|)).\label{ZCS_e7}
\end{align} Similarly, combining \eqref{ZCS_e4}, \eqref{ZCS_e6}  and \eqref{ZCS_e6b} with Lemma \ref{lem:taylorExp} shows for some $C_2\geq 0$ independent of $\epsilon$ and $\tau$,
\begin{align}
|D_t^\tau\psi\discdep{u+\epsilon v}{i}-D_t\psi\contdep{u+\epsilon v}{t_i}| \leq \tau^q|\epsilon|^l(C_2+\mathcal{O}(|\epsilon|)).\label{ZCS_e8}
\end{align}
Finally, for some $\delta_2>0$, $\lambda^\tau\discdep{u+\epsilon v}{i}\neq 0$ for all $|\epsilon|<\delta_2$. Indeed, since from \eqref{ZCS_e2}, for 
$C_3=\frac{1}{l!} \frac{\partial^l \lambda^{\tau}}{(\partial u_{i+1})^l}\discdep{u}{i} \neq 0$,
\begin{align}
\lambda^\tau\discdep{u+\epsilon v}{i} &= \underbrace{\lambda^\tau\discdep{u}{i}}_{=0}+ \underbrace{\lambda_1^\tau\discdep{u,v}{i}}_{=0} \epsilon + \dots + \underbrace{\lambda_l^\tau\discdep{u,v}{i}}_{\neq 0} \epsilon^l+\mathcal{O}(\epsilon^{l+1}) = \epsilon^l (C_3 +\mathcal{O}(\epsilon )) \neq 0 \text{ for }|\epsilon|<\delta_2.\label{ZCS_e9}
\end{align}Let $\displaystyle M_2:=\max_{\epsilon\in [-\delta_2,\delta_2]} |F\contdep{u+\epsilon v}{t_i}|$. Hence, together with \eqref{ZCS_e7}-\eqref{ZCS_e9}, $E_2$ from \eqref{ZCthm_e2} can be bounded as,
\begin{align}
E_2 &\leq \frac{\tau^q |\epsilon|^l (C_1+\mathcal{O}(|\epsilon|))M_2+\tau^q|\epsilon|^l (C_2+\mathcal{O}(|\epsilon|))}{|\epsilon|^l|C_3+\mathcal{O}(\epsilon)|} \nonumber\\
&= \tau^q\frac{C_1M_2+C_2 + \mathcal{O}(|\epsilon|)}{|C_3+\mathcal{O}(\epsilon)|}\nonumber\\
&\leq\tau^q\mathcal{O}(1+|\epsilon|), \text{ if }|\epsilon|<\delta_3\text{ for some }\delta_3>0. \label{ZCthm_e3}
\end{align}

Lastly for $E_3$, since $v(t)$ is compactly supported inside $[t_{i+\frac{1}{2}},t_{i+\frac{3}{2}}]$ and $v(t_{i+1})=1$, then Lemma \ref{scalarZClemma} implies that there exists $\delta_4>0$ such that if $|\epsilon|<\delta_4$,
\begin{align}
E_3 &= \left|\frac{D_t^{\tau}\psi(t_i;z+\epsilon,u(t_i),u(t_{i-1}),\dots,u(t_{i-\mu+1}))}{\lambda^{\tau}(t_i;z+\epsilon,u(t_i),u(t_{i-1}),\dots,u(t_{i-\mu+1}))} - \frac{\frac{\partial^l D_t^{\tau}\psi}{(\partial u_{i+1})^l}\discdep{u^\tau}{i}}{\frac{\partial^l \lambda^{\tau}}{(\partial u_{i+1})^l}\discdep{u^\tau}{i}} \right| \leq \tau^q \label{ZCthm_e4}
\end{align} Combining \eqref{ZCthm_e1}, \eqref{ZCthm_e3}, \eqref{ZCthm_e4} and choosing $\epsilon$ such that $0<|\epsilon|<\min(\delta_1, \delta_2, \delta_3, \delta_4, \tau^q)$ implies,
$$|F\contdep{u}{t_i}-F^\tau\discdep{u}{t_i}| \leq \underbrace{E_1}_{\leq M_1 \tau^q} + \underbrace{E_2}_{\leq C\tau^q(1+\tau^q)} + \underbrace{E_3}_{\leq \tau^q}= \mathcal{O}(\tau^q).$$\end{proof}\subsection{Zero-compatible ODE examples}
In practice, zero-compatible pair of $(\lambda^{\tau},D_t^{\tau}\psi)$ often arises when $D^\tau\psi$ factors algebraically into a product of $\lambda^\tau$ and some closed form expression $G^\tau$, i.e. $D^\tau\psi\disc{u^\tau}=\lambda^\tau\disc{u^\tau}G^\tau\disc{u^\tau}$. In particular, the singularities arising from the zeros of $\lambda^\tau$ become ``removable" since,
$$F^\tau\disc{u^\tau} := (\lambda^\tau)^{-1}\disc{u^\tau} D^\tau\psi\disc{u^\tau} = G^\tau\disc{u^\tau}.$$ Moreover, since $(\lambda^\tau,\lambda^\tau G^\tau)$ is automatically zero compatible of the same order, the consistency result of Theorem \ref{scalarZCThm} implies,
$$|F\contdep{u}{t_i}-G^\tau\discdep{u}{i}| = \mathcal{O}(\tau^q).$$
To illustrate this idea on a concrete example, consider the harmonic oscillator with mass $m$ and spring constant $k$,
\begin{align*}
&F\cont{x}:=mx_{tt}+kx, \text{ where } \left.D_t\left(\frac{m}{2}x_t^2+\frac{k}{2}x^2\right)\right|_{F\cont{x}=0}=\lambda\cont{x} F\cont{x} = 0,
\end{align*}with the conservation law multiplier $\lambda\cont{x}=x_t$.\\
\noindent Choosing $\phi^\tau\discdep{x^\tau}{n}=\frac{m}{2}\left(\frac{x_{n+1}-x_n}{\tau}\right)^2+\frac{k}{2}\left(\frac{x_{n+1}+x_n}{2}\right)^2$, then $D_t^\tau\phi\disc{x^\tau}_{n}$ factors into,
\[
D_t^\tau\phi\discdep{x^\tau}{n} = \frac{\phi^\tau\disc{x^\tau}_{n}-\phi^\tau\disc{x^\tau}_{n-1}}{\tau} = \left(\frac{x_{n+1}-x_{n-1}}{2\tau}\right)\underbrace{\left(m\frac{x_{n+1}-2x_n+x_{n-1}}{\tau^2}+k\frac{x_{n+1}+2x_n+x_{n-1}}{4}\right)}_{G^\tau\discdep{x}{n}}.
\] Indeed, if we now choose $\lambda^\tau\discdep{x^\tau}{n} = \frac{x_{n+1}-x_{n-1}}{2\tau}$, then the multiplier method exactly factors into the following conservative scheme,
\[
F^\tau\discdep{x^\tau}{n} = m\frac{x_{n+1}-2x_n+x_{n-1}}{\tau^2}+k\frac{x_{n+1}+2x_n+x_{n-1}}{4}.
\]While it can already be seen explicitly that $F^\tau\discdep{x^\tau}{n}$ is consistent to $F\contdep{x}{t_n}$, Theorem \ref{scalarZCThm} also guarantees consistency, since $x_{n+1}=x_{n-1}$ is a zero of order one of $\lambda^\tau\discdep{x^\tau}{n}$ with $\frac{\partial\lambda^\tau}{\partial x_{n+1}}\discdep{x^\tau}{n}=\frac{1}{2\tau}\neq 0$.

An example for which $D^\tau\psi\disc{u^\tau}$ does not factors algebraically but Theorem \ref{scalarZCThm} would still apply is the pendulum problem. Recall from Section \ref{sec:pendProblem} that the multiplier method produced,
\begin{align}
D_t^{\tau}\psi\discdep{\theta^{\tau}}{n}& = \frac{(\theta_{n+1}-\theta_{n-1})(\theta_{n+1}-2\theta_n+\theta_{n-1})}{2\tau^3}-\frac{g}{l}\frac{\cos(\theta_{n+1})-\cos(\theta_{n-1})}{2\tau},
\end{align}with the discrete multiplier $\lambda^{\tau}\discdep{\theta^{\tau}}{n}=\frac{\theta_{n+1}-\theta_{n-1}}{2\tau}$. Clearly, $D_t^{\tau}\psi$ cannot be factored algebraically into a product $\lambda^{\tau}$ and a closed form $G^\tau$. However, it can be seen that $(\lambda^{\tau},D_t^{\tau}\psi)$ is zero-compatible of order one and thus Theorem \ref{scalarZCThm} guarantees consistency of $F^\tau$ even when $\lambda^\tau$ vanishes.

}
\section{Conclusion}
\aw{

%One practical way to avoid this issue is to time march the discretized density without multiplying the inverse of discretized multiplier and .

In this paper, we introduced the multiplier method for ordinary and partial differential equations which conserves discretely first integrals and conservation laws. In particular, \rev{when the inverse of the discrete multiplier is non-singular,} we showed that the proposed discretization is consistent to any order of accuracy depending on the choice of finite difference approximation of the conservation law multipliers, densities and fluxes. Also, we showed a discrete version of the divergence theorem holds and thus the densities are conserved exactly at the discrete level. Due to the generality of the consistency theorem of the proposed discretization, we believe there is much advantage and flexibility in achieving higher order accuracy for the densities as well. Moreover, as there is some freedom to choose discretization for the multipliers, densities and fluxes, we can often simplify the final form of the discretization; as demonstrated in the examples of Section \ref{sec:examples}. \rev{Finally, in the case when the inverse of the discrete multiplier becomes singular, we have shown the consistency of the multiplier method for scalar ODEs, provided the pair of discrete multiplier and density is zero-compatible.}

The work presented can be applied to general differential equations possessing conservation laws. Since our approach does not require the existence of any other geometric structure apart from the differential equations and conservation laws themselves, this work is complementary to existing conservative methods. In particular, the multiplier method can be applied even when the underlying differential equations do not admit a Hamiltonian or variational structure, in contrast to geometric integrators and multi-symplectic integrators. Moreover, an important new physical case covered by the multiplier method are dissipative systems; this was illustrated in Section \ref{sec:dho} by the damped harmonic oscillator example. Indeed, it may be possible in some cases to transform a system into one possessing a Hamiltonian or variational structure for which existing methods can be applied. However, depending on applications, it may be more natural and simpler to work with the physical variables themselves. Also, such transformations may come at a loss of accuracy of the discretization, as in the case of the Korteweg-de Vries example in Section \ref{sec:kdv}.

We have investigated the case when the given differential equations possess at most the same number of conservation laws as the number of equations. This requirement was relevant in order to locally invert a full (or sub) multiplier matrix formed by the associated conservation law multipliers. The case not covered here is when there are more conservation laws than the number of equations. The main difficulty stems from the resulting multiplier matrix not being full rank and thus it cannot be treated in the same manner as the other cases. Nonetheless, this is an important case which arises strictly in partial differential equations, where there may be more conservation laws than the number of equations. It is of current interest to extend our results to include that case also.

\rev{In the case when the inverse of the discrete multiplier become singular, it turned out the zero-compatibility condition played a vital role in showing consistency of the multiplier method in scalar ODEs. We believe this can be extended to system of ODEs, where an analogue of \eqref{scalarCZC} is when square multiplier matrices $\Lambda$ becomes singular, i.e. $\det(\Lambda)=0$. In particular, consider a square conservation law multiplier matrix $\Lambda\cont{\boldsymbol u}$ of $\boldsymbol F\cont{\boldsymbol u}$ with density $\boldsymbol\psi\cont{\boldsymbol u}$, then observe for $\boldsymbol u$ such that,
\begin{equation}
\det(\Lambda\contdep{\boldsymbol u}{t_i})=0 \Rightarrow \adjugate(\Lambda\contdep{\boldsymbol u}{t_i})D_t \boldsymbol\psi\contdep{\boldsymbol u}{t_i} = \adjugate(\Lambda\contdep{\boldsymbol u}{t_i}) \Lambda\contdep{\boldsymbol u}{t_i} \boldsymbol F\contdep{\boldsymbol u}{t_i} = \det(\Lambda\contdep{\boldsymbol u}{t_i}) \boldsymbol F\contdep{\boldsymbol u}{t_i} = \boldsymbol 0, \label{systemCZC}
\end{equation}where $\adjugate(\Lambda\contdep{\boldsymbol u}{t_i})$ denotes the adjugate of the multiplier matrix $\Lambda\contdep{\boldsymbol u}{t_i}$.
Thus, we conjecture that the discrete analog of \eqref{systemCZC} will lead to the zero-compatibility condition for system of ODEs and in helping establishing consistency for this case. This is a subject of our current investigation as well.
}

Also, we note that in this paper we did not consider questions pertaining to stability and convergence for the multiplier method.  As the proposed method generally leads to nonlinear discretizations, answering these questions is difficult in general and is the focus of our current research direction. 
}
\aw{
\section*{Acknowledgement}
AB is a recipient of an APART Fellowship of the Austrian Academy of Sciences. JCN acknowledges partial support from the NSERC Discovery and NSERC Discovery Accelerator Supplement programs.

\bibliographystyle{hep}
\bibliography{refs}

\begin{appendices}

\section{Derivation of conservation law for the damped harmonic oscillator}
For illustration, we include a short discussion on how the method of Euler operator can be used to derive conservation laws for the damped harmonic oscillator. For more details and its general theory, see \cite{Olv00, BluAnc10} for many more examples.

Given a scalar function $f(t,x,x_t,x_{tt})$, the Euler operator $\mathcal{E}$ applied to $f$ is given by
\begin{equation*}
\mathcal{E}(f) = \frac{\partial f}{\partial x}-\frac{d}{dt}\left(\frac{\partial f}{\partial x_t}\right)+\frac{d^2}{dt^2}\left(\frac{\partial f}{\partial x_{tt}}\right). \label{euler2ndDer}
\end{equation*}
The special property of $\mathcal{E}$ is that its kernel is exactly characterized by divergence expressions. In particular, for any analytic function $x(t)$,
\begin{equation}
0=\mathcal{E}(f) \iff f(t,x,x_t,x_{tt})=D_t \psi(t,x,x_t) \text{ for some analytic function } \psi(t,x,x_t). \label{eulerProp}
\end{equation}
Thus, to find conservation laws of differential equations of the form $F(t,x,x_t,x_{tt})=0$, we first find conservation law multipliers $\lambda$ such that $\mathcal{E}(\lambda F)=0$. If such a $\lambda$ is found, then there must be $\psi$ such that $\lambda F = D_t \psi$ by \eqref{eulerProp}. So the second step is to compute the density $\psi$ (or fluxes in general) associated with $\lambda$. 

Indeed, if the expression for the differential equation $F$ is already a divergence expression, then the conservation law multiplier $\lambda=1$ would suffice. However, there may be other conservation laws for $F$ which are ``hidden". The goal is uncover other conserved quantities by first finding nontrivial conservation law multipliers $\lambda$. We illustrate this for the damped harmonic oscillator equation $F:=mx_{tt}+\gamma x_t+kx$ by choosing $\lambda$ to have the dependence on $t,x,x_t$. So by \eqref{eulerProp}, after a laborious calculation, 
\begin{align}
0&=\mathcal{E}(\lambda(t,x,x_t) F(t,x,x_t,x_{tt})) = \frac{\partial}{\partial x}\left(\lambda F\right)-\frac{d}{dt}\left(\frac{\partial}{\partial x_t}(\lambda F)\right)+\frac{d^2}{dt^2}\left(\frac{\partial }{\partial x_{tt}}(\lambda F)\right).\nonumber\\
\Rightarrow 0&= x_{tt}\left(\frac{}{}2m\lambda_x -2\gamma\lambda_{x_t}+m\lambda_{x_t t}+mx_t\lambda_{x_t x}-(kx+\gamma x_t)\lambda_{x_t x_t}\right) \nonumber\\
&\hspace{4mm}+x_t\left(\frac{}{}m(2\lambda_{x t}+\lambda_{x_t t t})-2\gamma \lambda_{x_t t} \right)+x\left(\frac{}{}2\lambda_x b-k\lambda_{x_t t}-\gamma\lambda_{x t}+m\lambda_{x t t}\right)\label{appendix:e1}
\end{align} The expression multiplying $x_{tt}$ in \eqref{appendix:e1} does not depend on $x_{tt}$ and hence must vanish independently, as $x(t)$ is arbitrary.
\begin{equation}
2m\lambda_x -2\gamma\lambda_{x_t}+m\lambda_{x_t t}+mx_t\lambda_{x_t x}-(kx+\gamma x_t)\lambda_{x_t x_t}=0 \label{appendix:e2}
\end{equation} To solve \eqref{appendix:e2}, we choose the ansatz of $\lambda(t,x,x_t) = a(t)x_t+b(t)x$ which simplifies to,
\begin{equation}
2mb-2\gamma a+ma'=0. \label{appendix:e3}
\end{equation} In general, there may be other conservation law multipliers of the form $\lambda(t,x,x_t)$ and finding all such multipliers is a classification problem for the given differential equation. Since our main goal here is just to find one conservation law multiplier, the above ansatz allows us to proceed without solving the general problem of \eqref{appendix:e2}.
Similar to \eqref{appendix:e3}, the expressions multiplying $x_t, x$ must also vanish independently and simplify to,
\begin{align}
2mb'-2\gamma a'+ma''&=0, \label{appendix:e4} \\
-a'k+2bk-\gamma b'+mb''&=0. \label{appendix:e5}
\end{align} Equation \eqref{appendix:e4} is just a differential consequence of \eqref{appendix:e3} and hence does not contain new information. To solve \eqref{appendix:e5}, we solve $b$ in terms of $a, a'$ in \eqref{appendix:e3} and substitute it in \eqref{appendix:e5} for $b, b'$ and $b''$. Hence, we find the determining equation for $a(t)$ to be,
\begin{equation}
\frac{m}{2} a''' - \frac{3\gamma}{2}a''+\left(2k+\frac{\gamma^2}{m}\right)a'+\frac{2\gamma k}{m}a=0.\label{appendix:e6}
\end{equation} Using the standard method of characteristic polynomial to solve \eqref{appendix:e6}, we find that $a(t)$ is the linear combination,
$$a(t) = C_1\exp\left(\frac{\gamma}{m}t\right)+C_2\exp\left(\frac{\gamma+\sqrt{\gamma^2-4mk}}{m}t\right)+C_3\exp\left(\frac{\gamma-\sqrt{\gamma^2-4mk}}{m}t\right).$$
For simplicity, we choose $a(t) = e^{\frac{\gamma}{m}t}$ which gives $b(t)=\frac{\gamma}{2m}e^{\frac{\gamma}{m}t}$ by \eqref{appendix:e3}.
Thus, we have found a conservation law multiplier $\lambda(t,x,x_t) = e^{\frac{\gamma}{m}t}\left(x_t+\frac{\gamma}{2m}x\right)$.
To find associated density $\psi$, it is straightforward to match on both sides of,
$$
\psi_t+\psi_x x_t+\psi_{x_t}x_{tt}=D_t\psi = \lambda F = e^{\frac{\gamma}{m}t}\left(x_t+\frac{\gamma}{2m}x\right)\left(mx_{tt}+\gamma x_t+kx\right),
$$
and obtain,
\begin{equation}
\psi(t,x,x_t) = \frac{e^{\frac{\gamma}{m} t}}{2}\left(m\left(x_t+\frac{\gamma}{2m}x\right)^2+\left(k-\frac{\gamma^2}{4m}\right)x^2\right). \label{appendix:e7}
\end{equation}
In summary, using the method of Euler operator, we have derived a conserved quantity $\psi$ given by \eqref{appendix:e7} for the damped harmonic oscillator.  

{
\section{Numerical verification for the damped harmonic oscillator}
In this appendix, we numerically verify our theoretical findings for
the damped harmonic oscillator example in Section \ref{sec:dho}. Specifically, we observed that the proposed scheme has second order accuracy in the variable $x$. Additionally, we check that the density $\psi$ is discretely conserved and that
the discrete density converges in second order to the exact value of $\psi$.

For our numerical experiment, we consider the case where $m=1$, $k=5$,
and $\gamma=1/2$, with initial conditions $x\left(0\right)=1$, and
$x_{t}\left(0\right)=0$. Figure (\ref{NumSolution}) shows the solution
computed with $N=200$ points, which is in good agreement with the
exact solution.

\begin{figure}[h!]
\centering
\includegraphics[width=16cm]{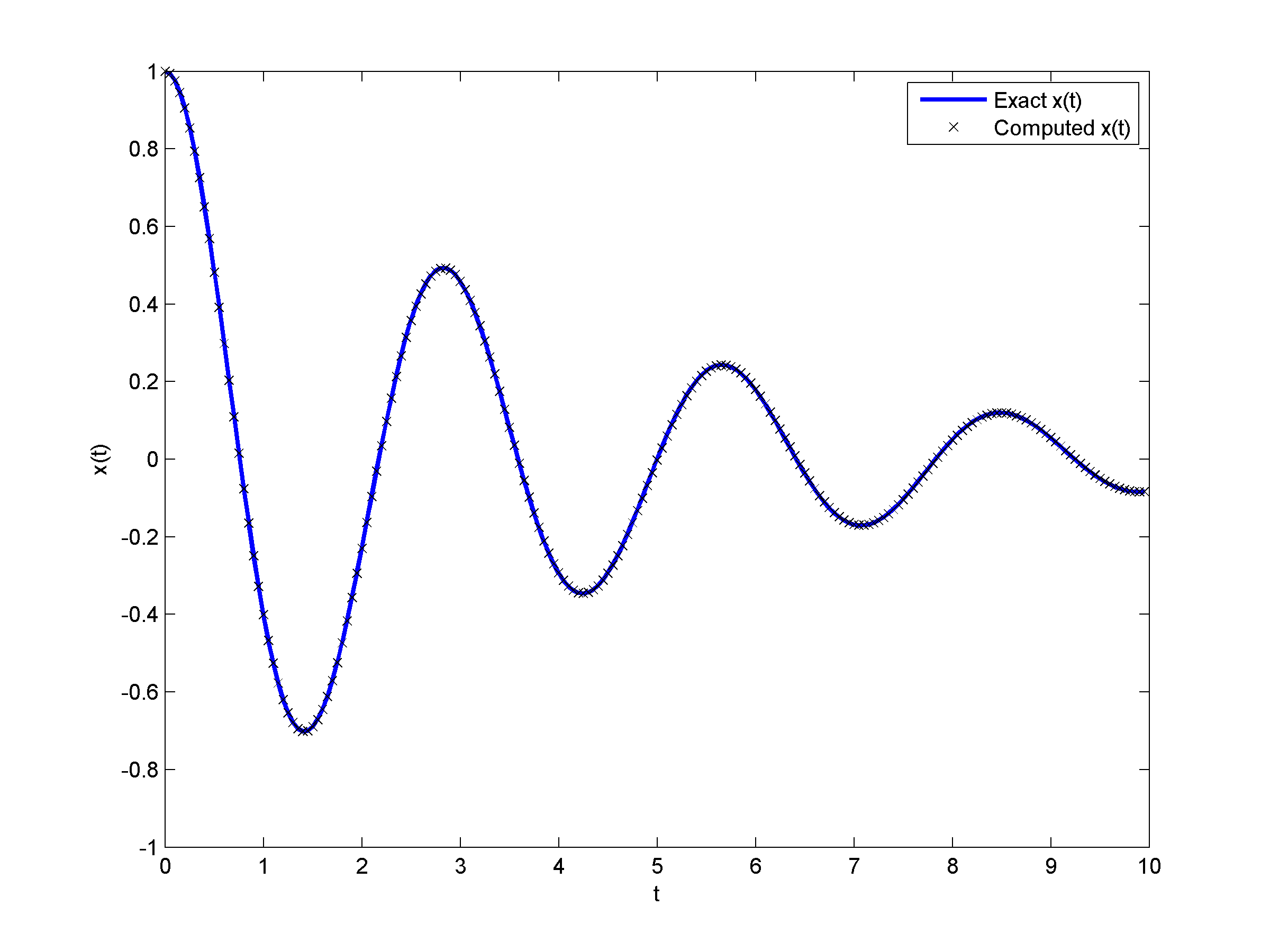}
\caption{\label{NumSolution}Exact vs. computed solution with $N=200$ points.}
\end{figure}

We also compute the error in $x$ at time $t=10$ given by $\left|x_{exact}\left(t=10\right)-x_{N}^{\tau}\right|$,
where $\tau=10/N$ and $N$ is the total number of points
used in the computation. Figure (\ref{ConvAngle}) shows the second
order asymptotic convergence in $x$. 

\begin{figure}[h!]
\centering
\includegraphics[width=16cm]{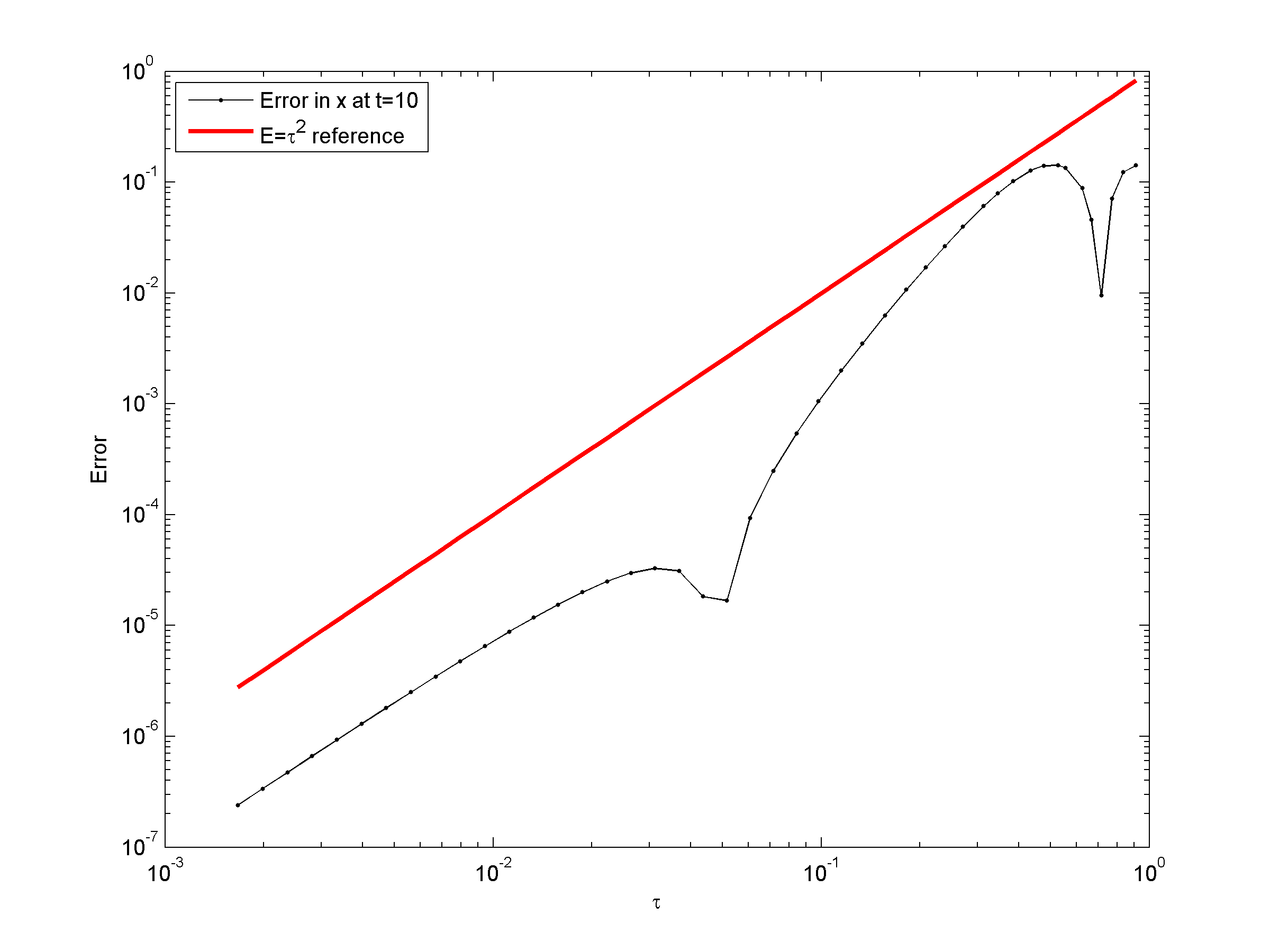}
\caption{\label{ConvAngle}Convergence plot for $x\left(t\right)$.}
\end{figure}

Similarly, we compute the error in $\psi$ at time $t=10$ given by $\left|\psi_{exact}\left(t=10\right)-\psi_{N}^{\tau}\right|$. Figure (\ref{ConvergencePsi}) shows the second order asymptotic convergence in $\psi$.

\begin{figure}[h!]
\centering
\includegraphics[width=16cm]{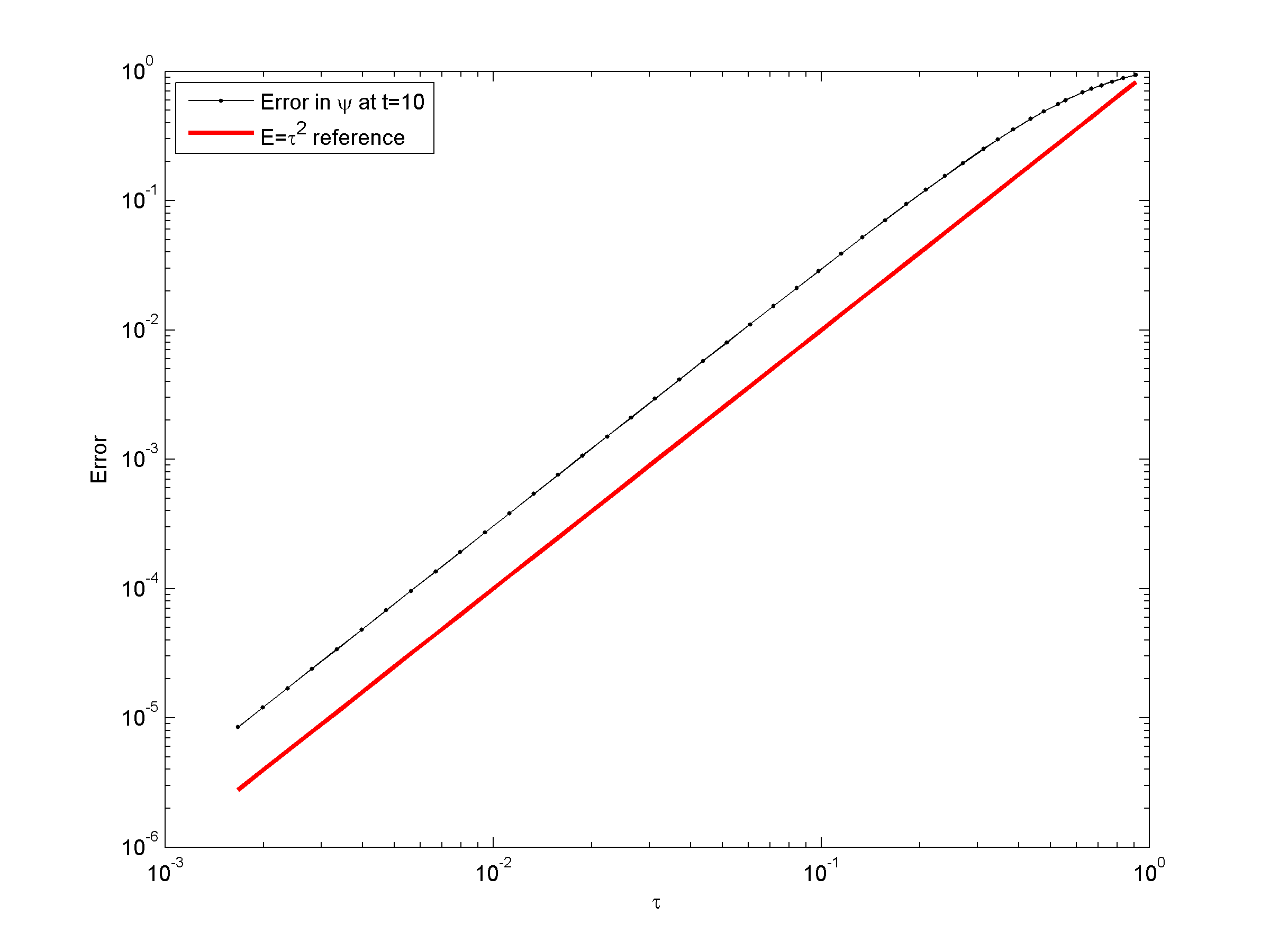}
\caption{\label{ConvergencePsi}Convergence plot for $\psi\left(t\right)$.}
\end{figure}

Finally, for a fixed number of points $N=200$, we observed that $\left|\underset{i=1\cdots N-1}{\max}\psi_{i}^{\tau}-\underset{i=1\cdots N-1}{\min}\psi_{i}^{\tau}\right|\thickapprox5.4\cdot10^{-14},$
thus confirming the discrete conservation property of the multiplier method presented in this paper.
}
\newpage
\rev{
\section{Proofs of some lemmas}

\begin{proof}[\bf Proof of Lemma \ref{lem:taylorExp}:\nopunct]\text{ }\\
Since $\lambda^\tau$ and $D^\tau_t\psi$ are finite difference discretizations of $\lambda$ and $D_t\psi$ with accuracy of order $p$, then for any $w\in C^{q+\max(k,r)}(\overline{\mathcal{I}})$, by Taylor's Theorem, there are remainder terms $C_\lambda, C_t$ depending on $w^{(q+\max(k,r))}$ at $\xi_k,\eta_k$ belonging in a neighbourhood of $t_i$ such that,
\begin{subequations}
\begin{align}
\lambda\contdep{w}{t_i} &= \lambda^\tau\discdep{w}{i}+C_\lambda(w^{(p+1)}(\xi_k))\tau^q, \label{lem:lambdaExp}\\
D_t\psi\contdep{w}{t_i} &= D_t^\tau\psi\discdep{w}{i}+C_t(w^{(p+1)}(\eta_k))\tau^q. \label{lem:DpsiExp}
\end{align}\end{subequations}Setting $w=u+\epsilon v \in C^{q+\max(k,r)}(\overline{\mathcal{I}})$ in \eqref{lem:lambdaExp} and \eqref{lem:DpsiExp} yields \eqref{lem:lambdaRem} and \eqref{lem:DpsiRem}.
\\To show \eqref{lem:lambdaiExp}, Taylor expand $\lambda$ and $\lambda^\tau$ with remainder terms as,\begin{subequations}
\begin{align}
\lambda\contdep{u+\epsilon v}{t_i} &= \lambda\contdep{u}{t_i}+\lambda_1\contdep{u,v}{t_i}\epsilon+\dots + \lambda_{n-1}\contdep{u,v}{t_i}\epsilon^{n-1}+ \lambda_n\contdep{u+\epsilon_1 v}{t_i}\epsilon^n, \label{lem:lambdaEps} \\
\lambda^\tau\discdep{u+\epsilon v}{i} &= \lambda^\tau\discdep{u}{i}+\lambda^\tau_1\discdep{u,v}{i}\epsilon+\dots + \lambda^\tau_{n-1}\discdep{u,v}{i}\epsilon^{n-1}+ \lambda^\tau_n\discdep{u+\epsilon_2 v}{i}\epsilon^n, \label{lem:lambdaDiscEps}
\end{align}\end{subequations}for some $\epsilon_1,\epsilon_2 \in (-|\epsilon|, |\epsilon|)$ and $n=q+\max(k,r)$.
Subtracting \eqref{lem:lambdaEps} with \eqref{lem:lambdaDiscEps} and combining with \eqref{lem:lambdaRem} implies for $1\leq j\leq n-1$,
\begin{align}
C_\lambda(\epsilon) \tau^q = \lambda\contdep{u}{t_i}-\lambda^\tau\discdep{u}{i}+(\lambda_1\contdep{u,v}{t_i}-\lambda^\tau_1\discdep{u,v}{i})\epsilon+\dots+(\lambda_{j}\contdep{u,v}{t_i}-\lambda^\tau_{j}\discdep{u,v}{i})\epsilon^{j}+\mathcal{O}(\epsilon^{j+1}). \label{lem:lambdaDiff}
\end{align} Taking the $j$-th derivative with respect to $\epsilon$ on both sides of \eqref{lem:lambdaDiff} and evaluating it at $\epsilon=0$ yields \eqref{lem:lambdaiExp}. A similar argument shows \eqref{lem:DpsiiExp}.
\end{proof}

\begin{proof}[\bf Proof of Lemma \ref{scalarZClemma}:\nopunct]\text{ }\\
Since $\lambda^{\tau}$ and $D_t^{\tau}\psi$ are zero-compatible of order $l$, then by Taylor expanding about $u_{i+1}=z$, there exists $\xi, \eta$ between $u_{i+1}$ and $z$ such that,
\begin{equation}
\frac{D_t^{\tau}\psi\discdep{u^\tau}{i}}{\lambda^{\tau}\discdep{u^\tau}{i}} = \frac{0+\dots+0+\frac{1}{l!}\left.\frac{\partial^l D_t^{\tau}\psi}{(\partial u_{i+1})^l}\discdep{u^\tau}{i}\right|_{u_{i+1}=\xi} (u_{i+1}-z)^{l}}{0+\dots+0+\frac{1}{l!}\left.\frac{\partial^l \lambda^{\tau}}{(\partial u_{i+1})^l}\discdep{u^\tau}{i}\right|_{u_{i+1}=\eta} (u_{i+1}-z)^{l}} = \frac{\left.\frac{\partial^l D_t^{\tau}\psi}{(\partial u_{i+1})^l}\discdep{u^\tau}{i}\right|_{u_{i+1}=\xi}}{\left.\frac{\partial^l \lambda^{\tau}}{(\partial u_{i+1})^l}\discdep{u^\tau}{i}\right|_{u_{i+1}=\eta}}. \label{scalarZCexpand}
\end{equation} Since $\frac{\partial^l \lambda^{\tau}}{(\partial u_{i+1})^l}\discdep{u^\tau}{i}\neq 0$ is continuous at $u_{i+1}=z$, $\frac{\partial^l \lambda^{\tau}}{(\partial u_{i+1})^l}\discdep{u^\tau}{i}\neq 0$ for sufficiently close $u_{i+1}=\eta$ to $z$. Thus, we obtain the desired result in the limit as $u_{i+1}\rightarrow z$ in \eqref{scalarZCexpand}.
\end{proof}

}
\end{appendices}
}

\end{document}